\DeclareMathOperator{\Rep}{Re}
\DeclareMathOperator{\Imp}{Im}
\DeclareMathOperator{\Log}{Log}
\DeclareMathOperator{\li}{li}
\DeclareMathOperator{\mult}{mult}
\numberwithin{equation}{section}
\theoremstyle{plain}
\newtheorem{Lemma}{Lemma}[section]
\newtheorem{Theorem}{Theorem}[section]
\newtheorem{Proposition}[Lemma]{Proposition}
\newtheorem{Corollary}[Lemma]{Corollary}
\theoremstyle{definition}
\newtheorem*{Remark}{Remark}
\newcommand{\R}{\mathbb {R}}
\newcommand{\C}{\mathbb {C}}
\newcommand{\Z}{\mathbb {Z}}
\begin{document}
\title[Zeros of $L'(s,\chi)$]
{
Zeros of the first derivative of Dirichlet $L$-functions
}
\author[H. Akatsuka]{Hirotaka Akatsuka}
\address{%
Otaru University of Commerce,
3--5--21, Midori, Otaru, Hokkaido, 047--8501,
Japan.
}
\email{akatsuka@res.otaru-uc.ac.jp}
\author[A. I. Suriajaya]{Ade Irma Suriajaya}
\address{%
Nagoya University,
Furo-cho, Chikusa-ku, Nagoya, Aichi, 464--8602,
Japan.
}
\email{m12026a@math.nagoya-u.ac.jp}
\subjclass[2010]{11M06}
\keywords{Dirichlet $L$-function, derivative, zeros}
\thanks{The second named author was in part supported by
JSPS KAKENHI Grant Number 15J02325.}
\begin{abstract}
Y\i ld\i r\i m has classified zeros of the derivatives of Dirichlet
$L$-functions into trivial zeros, nontrivial zeros
and vagrant zeros.
In this paper we remove the possibility of vagrant zeros
for $L'(s,\chi)$ when the conductors are large to some extent.
Then we improve asymptotic formulas for the number of zeros of
$L'(s,\chi)$ in $\{s\in\C:\Rep(s)>0, |\Imp(s)|\leq T\}$.
We also establish analogues of Speiser's theorem,
which characterize the generalized Riemann hypothesis
for $L(s,\chi)$ in terms of zeros of $L'(s,\chi)$,
when the conductor is large.
\end{abstract}
\maketitle
\section{Introduction}
Let $\chi$ be a primitive Dirichlet character modulo $q>1$.
We denote the Dirichlet $L$-function attached to $\chi$
by $L(s,\chi)$.
In this paper we investigate zeros of $L'(s,\chi)$, the first
derivative of $L(s,\chi)$.
Previously, Y\i ld\i r\i m \cite{Yi} has investigated zeros of
the $k$-th derivative $L^{(k)}(s,\chi)$ of $L(s,\chi)$ for
any given $k\in\Z_{\geq 1}$.
He has shown that $L^{(k)}(s,\chi)$ does not vanish when $\Rep(s)$
is sufficiently large.
He has also obtained a zero-free region in a left
half-plane.
Strictly speaking, for any fixed $\varepsilon>0$
there exists $K>0$, which depends only on $k$ and $\varepsilon$,
such that $L^{(k)}(s,\chi)$ has no zeros
in $\{s=\sigma+it:|s|>q^K, \sigma<-\varepsilon, |t|>\varepsilon\}$.
Based on the above facts,
he classified zeros of $L^{(k)}(s,\chi)$ in the following way
(see \cite[\S 7]{Yi}):
\begin{itemize}
 \item trivial zeros, which are located on $\{\sigma+it:\sigma\leq -q^K,
       |t|\leq \varepsilon\}$,
\item vagrant zeros, which are located on $\{s=\sigma+it:|s|\leq q^K,
      \sigma\leq-\varepsilon\}$,
\item nontrivial zeros, which are located in $\{\sigma+it:\sigma>-\varepsilon\}$.
\end{itemize}

Loosely speaking, one of our main results is to remove the possibility of
vagrant zeros for $L'(s,\chi)$.
In order to state it precisely, we put
\[
 \Theta(\chi):=\sup\{\Rep(\rho):\rho\in\C, L(\rho,\chi)=0\}.
\]
It is easy to check that the following properties hold:
\begin{itemize}
 \item $1/2\leq\Theta(\chi)\leq 1$.
 \item $\Theta(\overline{\chi})=\Theta(\chi)$.
 \item For each primitive Dirichlet character $\chi$,
the generalized Riemann hypothesis (GRH, in short) for $L(s,\chi)$
is equivalent to $\Theta(\chi)=1/2$.
\end{itemize}
The following theorem says that
$L'(s,\chi)$ does not vanish apart from $\Imp(s)=0$ on a left half-plane:
\begin{Theorem}\label{Thm1}
Let $\chi$ be a primitive Dirichlet character modulo $q>1$.
Then $L'(s,\chi)$ has no zeros
on $s\in \mathcal{D}_1(\chi)\cup \mathcal{D}_2(\chi)$,
where
\begin{align*}
\mathcal{D}_1(\chi)
&=\left\{\sigma+it:\sigma\leq
1-\Theta(\chi),~|t|\geq\frac{6}{\log q}\right\}
\setminus\{\rho\in\C:L(\rho,\chi)=0\},\\
\mathcal{D}_2(\chi)
&=\left\{\sigma+it:\sigma\leq -q^2,~|t|\geq\frac{12}{\log|\sigma|}\right\}.
\end{align*}
\end{Theorem}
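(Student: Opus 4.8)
We write $\mathfrak a\in\{0,1\}$ for the parity of $\chi$ (so $\chi(-1)=(-1)^{\mathfrak a}$), so that the completed function $\xi(s,\chi)=(q/\pi)^{(s+\mathfrak a)/2}\Gamma\!\big(\tfrac{s+\mathfrak a}{2}\big)L(s,\chi)$ satisfies $\xi(s,\chi)=\varepsilon(\chi)\,\xi(1-s,\overline\chi)$ with $|\varepsilon(\chi)|=1$. Logarithmic differentiation of this functional equation gives, at every $s$ with $L(s,\chi)L(1-s,\overline\chi)\neq0$ and away from the poles of the $\Gamma$-factors,
\[
\frac{L'}{L}(s,\chi)=-\log\frac{q}{\pi}-\frac12\frac{\Gamma'}{\Gamma}\!\Big(\frac{s+\mathfrak a}{2}\Big)-\frac12\frac{\Gamma'}{\Gamma}\!\Big(\frac{1-s+\mathfrak a}{2}\Big)-\frac{L'}{L}(1-s,\overline\chi).
\]
The plan is to prove that $\Rep\frac{L'}{L}(s,\chi)<0$ throughout $\mathcal D_1(\chi)\cup\mathcal D_2(\chi)$; since $L(s,\chi)\neq0$ on these sets --- on $\mathcal D_1(\chi)$ by definition, and on $\mathcal D_2(\chi)$ because the only zeros of $L(\cdot,\chi)$ with $\Rep(s)\le0$ are the trivial ones, which lie on the real axis --- this yields $L'(s,\chi)\neq0$ there.

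On $\mathcal D_1(\chi)$ one has $\Rep(1-s)=1-\sigma\ge\Theta(\chi)=\Theta(\overline\chi)$, hence $L(1-s,\overline\chi)\neq0$ (a zero of $L(\cdot,\overline\chi)$ at $1-s$ would, via the functional equation, force a zero of $L(\cdot,\chi)$ at $s$, which is excluded), so the identity applies. Using the Hadamard factorisation $\frac{\xi'}{\xi}(w,\overline\chi)=B(\overline\chi)+\sum_{\rho}\big(\tfrac1{w-\rho}+\tfrac1{\rho}\big)$ over the nontrivial zeros $\rho$ of $L(\cdot,\overline\chi)$, together with $\Rep B(\overline\chi)=-\sum_\rho\Rep(1/\rho)$, one gets $\Rep\frac{\xi'}{\xi}(w,\overline\chi)=\sum_\rho\Rep\frac1{w-\rho}>0$ whenever $\Rep(w)\ge\Theta(\overline\chi)$ and $L(w,\overline\chi)\neq0$. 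Substituting $\frac{L'}{L}(1-s,\overline\chi)=\frac{\xi'}{\xi}(1-s,\overline\chi)-\tfrac12\log\frac{q}{\pi}-\tfrac12\frac{\Gamma'}{\Gamma}\!\big(\tfrac{1-s+\mathfrak a}{2}\big)$ into the identity collapses it to
\[
\Rep\frac{L'}{L}(s,\chi)=-\frac12\log\frac{q}{\pi}-\frac12\Rep\frac{\Gamma'}{\Gamma}\!\Big(\frac{s+\mathfrak a}{2}\Big)-\Rep\frac{\xi'}{\xi}(1-s,\overline\chi)<-\frac12\log\frac{q}{\pi}-\frac12\Rep\frac{\Gamma'}{\Gamma}\!\Big(\frac{s+\mathfrak a}{2}\Big),
\]
so it remains to prove $\Rep\frac{\Gamma'}{\Gamma}\!\big(\tfrac{s+\mathfrak a}{2}\big)\ge-\log(q/\pi)$ on $\mathcal D_1(\chi)$. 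Here $\Rep\big(\tfrac{s+\mathfrak a}{2}\big)\le\tfrac34$ while $\big|\Imp\big(\tfrac{s+\mathfrak a}{2}\big)\big|=|t|/2\ge3/\log q$; from the partial-fraction expansion of $\Gamma'/\Gamma$ when $\sigma$ is bounded, and from the reflection formula $\frac{\Gamma'}{\Gamma}(z)=\frac{\Gamma'}{\Gamma}(1-z)-\pi\cot\pi z$ together with the elementary bound $|\Rep\cot\pi z|\le1/\sinh(2\pi|\Imp z|)$ when $\sigma$ is very negative (so that $\frac{\Gamma'}{\Gamma}(1-z)$ is large and positive), this reduces to an explicit inequality which holds once $q$ exceeds an absolute constant; the finitely many smaller conductors are then checked directly, the point being that for them $|t|\ge6/\log q$ forces $|t|$, and hence $\Rep\frac{\Gamma'}{\Gamma}\!\big(\tfrac{s+\mathfrak a}{2}\big)$, to be comfortably large.

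On $\mathcal D_2(\chi)$ the analysis is more direct: now $\Rep(1-s)=1-\sigma\ge1+q^2$, so $\big|\frac{L'}{L}(1-s,\overline\chi)\big|\le-\frac{\zeta'}{\zeta}(1+q^2)$ is negligible; applying the reflection formula to $\frac{\Gamma'}{\Gamma}\!\big(\tfrac{s+\mathfrak a}{2}\big)$ turns both $\Gamma$-terms into $\Gamma'/\Gamma$ evaluated at points of real part at least $\tfrac{|\sigma|+1}{2}$, each contributing about $-\tfrac12\log\tfrac{|\sigma|}{2}$ to $\Rep\frac{L'}{L}(s,\chi)$, while the cotangent correction is at most $\tfrac{\pi}{2\sinh(\pi|t|)}\le\tfrac1{2|t|}\le\tfrac1{24}\log|\sigma|$ by the hypothesis $|t|\ge12/\log|\sigma|$. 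Summing these bounds gives $\Rep\frac{L'}{L}(s,\chi)\le-\log(q/\pi)-\tfrac{23}{24}\log|\sigma|+O(1)<0$ since $|\sigma|\ge q^2\ge9$, and the theorem follows.

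I expect the second paragraph to carry the real difficulty: bounding $\Rep\frac{\Gamma'}{\Gamma}\!\big(\tfrac{s+\mathfrak a}{2}\big)$ from below uniformly over all conductors $q>1$, where one must track the interplay between the pole of $\Gamma'/\Gamma$ near the nonpositive integers --- equivalently the cotangent term --- and the $q$-dependent lower bound $|t|\ge6/\log q$, and where the borderline small conductors must be treated by hand.
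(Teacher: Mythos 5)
Your overall strategy---prove $\Rep(L'/L)(s,\chi)<0$ throughout $\mathcal{D}_1(\chi)\cup\mathcal{D}_2(\chi)$---is exactly the paper's, and on $\mathcal{D}_1(\chi)$ your route through the completed $L$-function is algebraically the same as the paper's: writing $\Rep\frac{\xi'}{\xi}(s,\chi)=-\Rep\frac{\xi'}{\xi}(1-s,\overline\chi)=\sum_\rho\frac{\sigma-\beta}{|s-\rho|^2}\leq 0$ for $\sigma\leq 1-\Theta(\chi)$ is just the paper's direct use of the Hadamard product for $L(s,\chi)$ (with $\Rep B(\chi)=-\sum_\rho\Rep(1/\rho)$), and both reduce to the same target inequality $\Rep\frac{L'}{L}(s,\chi)\leq-\frac12\log\frac{q}{\pi}-\frac12\Rep\frac{\Gamma'}{\Gamma}\bigl(\frac{s+\kappa}{2}\bigr)$. (Your strict inequality should be $\leq$, and the zero sum is only $\geq 0$, but that is harmless.) On $\mathcal{D}_2(\chi)$ you genuinely diverge: you go through the asymmetric functional equation, the Dirichlet series for $(L'/L)(1-s,\overline\chi)$ and the reflection formula, whereas the paper simply reuses the same Hadamard inequality (valid since $-q^2\leq 1-\Theta(\chi)$) together with $|s+\kappa|\geq|\sigma+\kappa|$; your version works and is essentially complete, though your bound $|\Rep\cot\pi z|\leq 1/\sinh(2\pi|\Imp z|)$ is slightly too strong as stated (the clean bound is $2/(e^{2\pi|\Imp z|}-1)$, larger by a factor at most $2$), which does not affect the conclusion.

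The one place where you have not actually proved anything is the step you yourself flag as carrying the real difficulty: the lower bound $\Rep\frac{\Gamma'}{\Gamma}\bigl(\frac{s+\kappa}{2}\bigr)\geq-\log(q/\pi)$ on $\mathcal{D}_1(\chi)$ is left as ``an explicit inequality which holds once $q$ exceeds an absolute constant,'' with the small conductors ``checked directly.'' The claim is true and your sketch is workable, but as written there is no uniform estimate controlling the pole of $\Gamma'/\Gamma$ near the nonpositive integers against the $q$-dependent threshold $|t|\geq 6/\log q$. The paper closes this gap with a single clean lemma valid for all $z=x+iy$ with $y\neq 0$, namely $\Rep\frac{\Gamma'}{\Gamma}(z)\geq\log|z|-\frac{\pi}{2|y|}$ (proved from the Binet-type integral representation of $\Gamma'/\Gamma$, splitting $x\geq 0$ and $x<0$), which immediately yields $\Rep\frac{L'}{L}(s,\chi)\leq-\frac12\log\frac{q|t|}{2\pi}+\frac{\pi}{2|t|}$; substituting $|t|\geq 6/\log q$ and using the elementary inequality $\alpha\log x-\log\log x\geq 1+\log\alpha$ with $\alpha=1-\pi/6$ then gives a negative absolute constant for every $q\geq 3$, with no case analysis on small conductors. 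You should either prove such a uniform digamma bound or actually carry out the finite verification you allude to; until then the $\mathcal{D}_1$ half of the theorem is not established.
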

\begin{Remark}
The constants $6$ and $12$ in $\mathcal{D}_1(\chi)$ and
$\mathcal{D}_2(\chi)$ can be replaced by smaller constants.
\end{Remark}
Let $\chi$ be a primitive Dirichlet character modulo $q>1$.
We determine $\kappa\in\{0,1\}$ such that $\chi(-1)=(-1)^{\kappa}$.
We recall that zeros of $L(s,\chi)$ on $\Rep(s)\leq 0$
are located at $s=-2j-\kappa$ for any $j\in\Z_{\geq 0}$.
These are called trivial zeros of $L(s,\chi)$.
The following theorem says that there is a unique zero
of $L'(s,\chi)$ corresponding to the trivial zero $s=-2j-\kappa$
of $L(s,\chi)$ for each $j\in\Z_{\geq 1}$.
\begin{Theorem}\label{Thm2}
Retain the above setting.
Then for each $j\in\Z_{\geq 1}$ the following assertions hold:
\begin{enumerate}
 \item[(1)] There is a unique zero of $L'(s,\chi)$ in the strip
 $s\in\{\sigma+it:-2j-\kappa-1<\sigma<-2j-\kappa+1,~t\in\R\}$.
\item[(2)] $L'(s,\chi)\neq 0$ on $\Rep(s)=-2j-\kappa+1$.
\end{enumerate}
\end{Theorem}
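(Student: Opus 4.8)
The plan is to pass to the logarithmic derivative and exploit the functional equation. Fix $j\ge1$ and set $S_j:=\{\sigma+it:-2j-\kappa-1<\sigma<-2j-\kappa+1\}$; since $j\ge1$ this lies in $\{\Rep(s)<0\}$, so the only zero of $L(s,\chi)$ in $S_j$ is the simple trivial zero at $s=-2j-\kappa$. Hence in $S_j$ the zeros of $L'(s,\chi)$ coincide, with multiplicity, with those of $g(s):=L'(s,\chi)/L(s,\chi)$, which in turn has a single, simple pole in $S_j$, at $s=-2j-\kappa$. The functional equation, together with the reflection formula for $\psi:=\Gamma'/\Gamma$, gives on a neighbourhood of $\overline{S_j}$
\[
 g(s)=\frac{\pi}{2}\cot\!\left(\frac{\pi(s+\kappa)}{2}\right)+G(s),\qquad G(s):=\log\frac{2\pi}{q}-\psi(1-s)-\frac{L'}{L}(1-s,\overline{\chi}),
\]
with $G$ holomorphic near $\overline{S_j}$ because $\Rep(1-s)\ge2j+\kappa\ge2$ there. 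I record for later use that $\cot(\pi(s+\kappa)/2)=-i\tanh(\pi t/2)$ on the lines $\Rep(s)=-2j-\kappa\pm1$, and that $\cot(\pi(s+\kappa)/2)\to\mp i$ as $\Imp(s)\to\pm\infty$.

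Next I would localize the zeros. On $S_j$ one has $\Rep(s)<0\le1-\Theta(\chi)$, while the trivial zero at $-2j-\kappa$, being simple and on $\Imp(s)=0$, is not a zero of $L'(s,\chi)$; so Theorem~\ref{Thm1}, through the set $\mathcal{D}_1(\chi)$, forces every zero of $L'(s,\chi)$ in $S_j$ to have $|\Imp(s)|<6/\log q$. Consequently, for any $T>6/\log q$, $g$ has no zeros or poles on the horizontal sides of $R_j:=S_j\cap\{|\Imp(s)|<T\}$; on the vertical sides I would invoke assertion~(2), noting that the left side of $S_j$ is the right side of $S_{j+1}$, so that~(2) for all $j\ge1$ covers every vertical side that occurs. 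The argument principle then gives
\[
 \#\{\text{zeros of }L'(s,\chi)\text{ in }S_j\}=1+\frac{1}{2\pi}\,\Delta_{\partial R_j}\arg g,
\]
the $1$ accounting for the pole of $g$ at $-2j-\kappa$.

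The key input for both assertions is the estimate $\Rep G(s)<0$ on the lines $\Rep(s)=-2j-\kappa\pm1$, which I would deduce from $\Rep\psi(a-it)\ge\psi(a)$ (with $a=\Rep(1-s)\ge2j+\kappa$ there) together with $|\tfrac{L'}{L}(1-s,\overline{\chi})|\le-\tfrac{\zeta'}{\zeta}(2j+\kappa)$. Assertion~(2) is then immediate: on $\Rep(s)=-2j-\kappa+1$ we have $L(s,\chi)\neq0$, so a zero of $L'(s,\chi)$ there would give $g(s)=0$, i.e. $G(s)=\tfrac{i\pi}{2}\tanh(\pi t/2)\in i\R$, contradicting $\Rep G(s)<0$. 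For assertion~(1) I would show $\Delta_{\partial R_j}\arg g\to0$ as $T\to\infty$. On a vertical side, $g(s)=G(s)-\tfrac{i\pi}{2}\tanh(\pi t/2)$ has $\Rep g(s)=\Rep G(s)<0$, so $g$ maps the side into the left half-plane; and since $g(s)\to-\infty$ along the negative real direction as $|\Imp(s)|\to\infty$, the net change of $\arg g$ along that side tends to $0$. On a horizontal side $\Imp(s)=\pm T$, one has $|g(s)|\sim|\psi(1-s)|\sim\log T\to\infty$ while $\cot(\pi(s+\kappa)/2)$ stays bounded, and $g$ itself varies by $o(1)$ as $\sigma$ runs across the strip (the dominant term $-\psi(1-s)$ doing so), so $\arg g$ varies by $o(1)$ there too. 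Summing the four sides, $\Delta_{\partial R_j}\arg g\to0$, hence equals $0$, and $L'(s,\chi)$ has exactly one zero in $S_j$. (Equivalently one may run this count through the holomorphic function $g(s)\sin(\pi(s+\kappa)/2)$, or by Rouch\'e.)

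The main obstacle is uniformity in $q$ and $j$: the crude bounds above yield $\Rep G(s)<0$ only when $q$ is not too small --- in the worst case, $\kappa=0$ and $j=1$, the requirement reads $\log(2\pi/q)<\psi(2)+\tfrac{\zeta'}{\zeta}(2)$, which fails for the smallest conductors --- so these finitely many small conductors, together with (for each) the few small $j$ for which $\psi(2j+\kappa)$ is not yet large enough, demand a sharper, separate treatment. On those lines only a compact range of $\Imp(s)$ remains in doubt (for $|\Imp(s)|$ large one has $\Rep G(s)\approx\log(2\pi/q)-\log|\Imp(s)|\to-\infty$ by itself), and there I would argue more carefully, e.g.\ also using $\Imp G(s)$ --- of size $\approx|t|\,\psi'(2j+\kappa)$ for small $t$ --- against $\tfrac{\pi}{2}\tanh(\pi t/2)$. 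Fitting these low-conductor cases into the scheme, rather than the contour bookkeeping of the generic range, is where the real work lies.
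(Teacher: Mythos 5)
Your strategy is essentially the paper's: via the functional equation you reduce both assertions to the negativity of $\Rep(L'/L)(s,\chi)$ on the lines $\Rep(s)=-2j-\kappa+1$ (your ``$\Rep G<0$'' is exactly this statement, since the cotangent term is purely imaginary there), and you then count zeros by the argument principle, closing the contour at infinity where the paper closes it at height $1000$ and invokes Theorem~\ref{Thm1} for the rest of the strip. That outline, including the use of assertion (2) for $j+1$ on the left edge of the strip, is sound.

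The genuine gap is the one you flag yourself and then leave open: establishing $\Rep G<0$ on the vertical lines for small conductors and small $j$. Your crude bounds $\Rep\psi(2j+\kappa-it)\ge\psi(2j+\kappa)$ and $|(L'/L)(2j+\kappa-it,\overline{\chi})|\le-(\zeta'/\zeta)(2j+\kappa)$ settle the case $\kappa=0$, $j=1$ only for $q\ge 8$, leaving $q=7$ and $q=5$. The paper rescues $q=7$ by the (numerically very tight) refinement of dropping $p=7$ from the prime sum, and for $q=5$, $j=1$ it must show that $\Rep(L'/L)(2-it,\chi_5)>0$ for $0\le t<3/2$ by evaluating this function at six sample points and bounding $|(L'/L)'(2-iv,\chi_5)|$ in between, with a separate monotonicity argument for $t\ge 3/2$. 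This constitutes the bulk of the technical content of the paper's proof, and the theorem is asserted for every primitive $\chi$ modulo $q>1$. Your proposed remedy of playing $\Imp G$ against $\tfrac{\pi}{2}\tanh(\pi t/2)$ is not carried out, and even if completed it would only yield nonvanishing of $L'/L$ on the line (assertion (2)); assertion (1) needs control of $\Rep(L'/L)$ (or of the winding) along the whole boundary, which mere nonvanishing does not supply. As written, the proposal proves the theorem only for conductors outside a small explicit exceptional set.
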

\begin{Remark}
In \cite[\S 5]{Yi}, Y\i ld\i r\i m has shown that
there exists $J=J_\chi>0$ such that the above assertions hold for any $j\geq J$.

 In the case of the Riemann zeta-function $\zeta(s)$, similar results
can be found in \cite[Theorem 9]{LM} and \cite{Spi}.
\end{Remark}
Retain the setting as in Theorem \ref{Thm2}.
For $j\in \Z_{\geq 1}$ we denote the zero of $L'(s,\chi)$
in $\{\sigma+it:-2j-\kappa-1<\sigma<-2j-\kappa+1\}$
by $\alpha_j(\chi)$.
According to Theorem \ref{Thm3} below, the zero $\alpha_j(\chi)$
is close to the trivial zero $s=-2j-\kappa$
of $L(s,\chi)$ when $jq$ is large.
\begin{Theorem}\label{Thm3}
 Retain the above setting.
Then we have
\[
 \alpha_j(\chi)=-2j-\kappa+O\left(\frac{1}{\log(jq)}\right),
\]
where the implied constant is absolute.
\end{Theorem}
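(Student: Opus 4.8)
The plan is to localise a zero of $L'(s,\chi)$ inside a disk of radius $\asymp 1/\log(jq)$ centred at the trivial zero $s=-2j-\kappa$ of $L(s,\chi)$, and then to use the uniqueness assertion of Theorem~\ref{Thm2}(1) to identify that zero with $\alpha_j(\chi)$. Fix $j\in\Z_{\geq 1}$ and put $w=s+2j+\kappa$, so the target point is $w=0$. Since $\chi$ is primitive modulo $q>1$ the function $L(s,\chi)$ is entire, and $s=-2j-\kappa$ is a \emph{simple} zero of it; hence $\frac{L'}{L}(s,\chi)$ has a simple pole there of residue $1$, and we may write $\frac{L'}{L}(s,\chi)=\frac{1}{w}+\widetilde{H}(s)$ with $\widetilde{H}$ holomorphic on $|w|\leq\tfrac14$ (the only zero of $L(\cdot,\chi)$ in that disk is $w=0$, because $j\geq1$). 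The crux is the estimate
\[
\widetilde{H}(s)=-\log(jq)+O(1)\qquad\text{uniformly for }|w|\leq\tfrac14,
\]
with an absolute implied constant; in particular $|\widetilde{H}(s)|\geq\tfrac12\log(jq)$ on that disk once $jq$ exceeds an absolute constant.

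To prove this estimate I would start from the functional equation for $L(s,\chi)$, whose logarithmic derivative reads
\[
\frac{L'}{L}(s,\chi)=-\log\frac{q}{\pi}-\frac12\,\psi\!\left(\frac{1-s+\kappa}{2}\right)-\frac12\,\psi\!\left(\frac{s+\kappa}{2}\right)-\frac{L'}{L}(1-s,\overline{\chi}),\qquad \psi:=\Gamma'/\Gamma .
\]
The pole $\frac{1}{w}$ comes entirely from $-\tfrac12\psi\!\left(\frac{s+\kappa}{2}\right)$, and the recursion $\psi(z+1)=\psi(z)+\frac1z$ gives $\psi\!\left(\frac{s+\kappa}{2}\right)+\frac{2}{w}=\psi\!\left(1+\tfrac{w}{2}\right)+\sum_{m=1}^{j}\frac{1}{m-w/2}=\log j+O(1)$ on $|w|\leq\tfrac14$. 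For the other gamma factor, $\frac{1-s+\kappa}{2}$ has real part $j+O(1)$ and stays away from the poles of $\psi$, so Stirling's expansion gives $-\tfrac12\psi\!\left(\frac{1-s+\kappa}{2}\right)=-\tfrac12\log j+O(1/j)$. Finally $\Rep(1-s)=1+2j+\kappa-\Rep(w)\geq\tfrac{11}{4}$, so $\frac{L'}{L}(1-s,\overline{\chi})=-\sum_{n}\Lambda(n)\overline{\chi}(n)n^{-(1-s)}$ converges absolutely and is $O(1)$ (indeed exponentially small in $j$). Summing the four contributions gives $\widetilde{H}(s)=-\log q-\log j+O(1)=-\log(jq)+O(1)$; a parallel computation, using that both digamma contributions have real part $\log j+O(1)$, shows $\Rep\widetilde{H}(s)=-\log(jq)+O(1)$, whence $|\widetilde{H}(s)|\geq\tfrac12\log(jq)$ on the disk once $jq$ exceeds an absolute constant.

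Now suppose $jq$ exceeds an absolute constant $C_0$ large enough that $r:=3/\log(jq)<\tfrac14$ and $|\widetilde{H}(s)|\geq\tfrac12\log(jq)$ on $|w|\leq r$. On the circle $|w|=r$ the holomorphic function $F(w):=w+1/\widetilde{H}(-2j-\kappa+w)$ satisfies $|F(w)-w|=|\widetilde{H}(-2j-\kappa+w)|^{-1}\leq 2/\log(jq)<r=|w|$, so by Rouch\'e's theorem $F$ has exactly one zero $w_0$ in $|w|<r$; and $F(0)=1/\widetilde{H}(-2j-\kappa)\neq0$, so $w_0\neq0$. Then $\frac{L'}{L}(s,\chi)=\frac1w+\widetilde{H}(s)$ vanishes at $s_0:=-2j-\kappa+w_0$, and since $L(s_0,\chi)\neq0$ (again the only zero of $L(\cdot,\chi)$ in the disk is $w=0$) this forces $L'(s_0,\chi)=0$; as $|w_0|<r<1$ the point $s_0$ lies in the strip of Theorem~\ref{Thm2}(1), whose unique zero of $L'(s,\chi)$ is $\alpha_j(\chi)$, so $\alpha_j(\chi)=s_0=-2j-\kappa+w_0$ with $|w_0|<3/\log(jq)$. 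The pairs $(j,\chi)$ with $jq\leq C_0$ form a finite set; for each of them Theorem~\ref{Thm2}(1) gives $|\alpha_j(\chi)+2j+\kappa|<1$ while $\log(jq)$ stays bounded below, so the estimate holds there with an absolute constant as well. Combining the two ranges yields $\alpha_j(\chi)=-2j-\kappa+O(1/\log(jq))$.

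The step I expect to be the real obstacle is the uniform estimate for $\widetilde{H}$: one must extract the main term $-\log(jq)$ so that it dominates uniformly across all ranges of $j$ against $q$ (when $q$ is small the saving must come from the $\log j$ contributed by the two digamma terms, and when $j$ is small, from $-\log q$), and one must verify that after removing the principal part $1/w$ the remainder is holomorphic with the asserted size on a disk of a \emph{fixed} radius, keeping every implied constant absolute. Granting that, the Rouch\'e argument and the reduction of the finitely many small conductors are routine.
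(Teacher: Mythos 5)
Your argument is correct and takes essentially the same route as the paper: both expand $\frac{L'}{L}(s,\chi)$ near the trivial zero $s=-2j-\kappa$ via the functional equation and Stirling's formula to get $\frac{L'}{L}(s,\chi)=\frac{1}{w}-\log(jq)+O(1)$ with $w=s+2j+\kappa$, and then localize a zero of $L'$ within radius $O(1/\log(jq))$ and identify it with $\alpha_j(\chi)$ via the uniqueness in Theorem~\ref{Thm2}. The only (cosmetic) difference is the concluding device: the paper shows $\Rep\frac{L'}{L}<0$ on the circle of radius $2/\log(jq)$ and applies the argument principle, whereas you apply Rouch\'e's theorem to the auxiliary function $w+1/\widetilde{H}$.
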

Concerning zeros of $L'(s,\chi)$ near the trivial zero $s=-\kappa$
of $L(s,\chi)$, we see the following:
\begin{Theorem}\label{Thm4}
Let $\chi$ be a primitive Dirichlet character modulo $q$.
Then,
\begin{enumerate}
 \item If $\chi(-1)=1$ and $q\geq 7$, then $L'(s,\chi)$ has no zeros
on $\{\sigma+it:-1\leq\sigma\leq 0,~t\in\R\}$.
\item If $\chi(-1)=-1$ and $q\geq 23$, then $L'(s,\chi)$ has a unique zero
on $\{\sigma+it:-2\leq\sigma\leq 0,~t\in\R\}$.
\end{enumerate}
\end{Theorem}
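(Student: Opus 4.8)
The plan is to carry $L'(s,\chi)$ into the half-plane $\Rep(s)\le 0$ by means of the functional equation, and then to deduce both assertions from the sign of the real part of a single explicit meromorphic function.

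Write $\chi(-1)=(-1)^{\kappa}$, put $f_{0}(s)=\cos(\pi s/2)$, $f_{1}(s)=\sin(\pi s/2)$ and $H(s)=(2\pi/q)^{s}/(f_{\kappa}(s)\Gamma(s))$. From $\Lambda(s,\chi)=(\tau(\chi)/i^{\kappa}\sqrt q)\,\Lambda(1-s,\overline{\chi})$, together with the duplication and reflection formulas for $\Gamma$, one obtains for $\Rep(s)\le 0$ the identity $L(s,\chi)=c(\chi)H(s)L(1-s,\overline{\chi})$ with $|c(\chi)|=\sqrt q/2$, hence
\[
 L'(s,\chi)=c(\chi)\bigl\{H'(s)L(1-s,\overline{\chi})-H(s)L'(1-s,\overline{\chi})\bigr\},\qquad
 \frac{H'}{H}(s)=\log\frac{2\pi}{q}-\frac{f_{\kappa}'}{f_{\kappa}}(s)-\frac{\Gamma'}{\Gamma}(s).
\]
In both strips of the theorem $\Rep(1-s)\ge 1$, so $L(1-s,\overline{\chi})\ne 0$; also $c(\chi)\ne 0$, and $H$ vanishes in the strip only at the trivial zero $s=-\kappa$, where it has a simple zero, so that $L'(-\kappa,\chi)=c(\chi)H'(-\kappa)L(1+\kappa,\overline{\chi})\ne 0$. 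Away from $s=-\kappa$ one may rewrite $L'(s,\chi)=c(\chi)H(s)L(1-s,\overline{\chi})\Phi(s)$ with $\Phi(s):=\frac{H'}{H}(s)-\frac{L'}{L}(1-s,\overline{\chi})$; consequently the zeros of $L'(s,\chi)$ in the strip coincide with those of $\Phi$, and $\Phi$ has exactly one pole in the closed strip, namely a simple pole at $s=-\kappa$ of residue $1$ (at the remaining integer points of the strip the poles of $\Gamma'/\Gamma$ and $f_{\kappa}'/f_{\kappa}$ cancel, which must be checked).

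The heart of the matter is to prove $\Rep\Phi(s)<0$ on the relevant set. From the Hadamard product for $L(\cdot,\overline{\chi})$ and the identity $\Rep B(\overline{\chi})=-\sum_{\rho}\Rep(1/\rho)$ (sum over the nontrivial zeros $\rho$), one has for $\Rep(w)\ge 1$
\[
 \Rep\frac{L'}{L}(w,\overline{\chi})=\sum_{\rho}\Rep\frac{1}{w-\rho}-\frac12\log\frac{q}{\pi}-\frac12\Rep\frac{\Gamma'}{\Gamma}\Bigl(\frac{w+\kappa}{2}\Bigr)\ \ge\ -\frac12\log\frac{q}{\pi}-\frac12\Rep\frac{\Gamma'}{\Gamma}\Bigl(\frac{w+\kappa}{2}\Bigr),
\]
the sum being $\ge 0$ because $\Rep(\rho)<1\le\Rep(w)$; it is exactly here that the delicate location of the zeros of $L(\cdot,\overline{\chi})$ near $\Rep=1$ does no harm, since those zeros only push $\Rep(L'/L)$ upward. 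Taking $w=1-s$ and inserting the closed form of $H'/H$ reduces $\Rep\Phi(s)<0$ to
\[
 -\Rep\frac{f_{\kappa}'}{f_{\kappa}}(s)-\Rep\frac{\Gamma'}{\Gamma}(s)+\frac12\Rep\frac{\Gamma'}{\Gamma}\Bigl(\frac{1-s+\kappa}{2}\Bigr)\ <\ \frac12\log q-\log(2\sqrt\pi).
\]
On $-1\le\Rep(s)\le 0$ (even $\chi$) and on $-2\le\Rep(s)\le 0$ (odd $\chi$) the quantities $\Rep\bigl(\tfrac{\pi}{2}\tan\tfrac{\pi s}{2}\bigr)$, resp.\ $\Rep\bigl(-\tfrac{\pi}{2}\cot\tfrac{\pi s}{2}\bigr)$, as well as $\Rep(1/s)$ and the corresponding pole parts of $\Gamma'/\Gamma$, are all $\le 0$, so the left side stays bounded above despite the singularities; an analysis of $\Rep(\Gamma'/\Gamma)$ along vertical lines shows that the left side is largest in the limit $s\to-\kappa$ along $\Rep(s)=-\kappa$, where it tends to $\tfrac12\gamma-\log 2$ (even case) and to $\tfrac12\gamma$ (odd case), $\gamma$ being Euler's constant. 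Thus the inequality holds once $q>\pi e^{\gamma}$, resp.\ $q>4\pi e^{\gamma}$---that is, for $q\ge 7$ (there being no primitive character modulo $6$) and for $q\ge 23$.

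It remains to conclude. If $\chi(-1)=1$ and $q\ge 7$, then $\Rep\Phi(s)<0$ throughout $-1\le\Rep(s)\le 0$, so $\Phi$, and hence $L'(s,\chi)$, has no zeros there. If $\chi(-1)=-1$ and $q\ge 23$, then $\Rep\Phi(s)<0$ holds on the two vertical sides $\Rep(s)=0$ and $\Rep(s)=-2$; moreover, using $\Gamma'/\Gamma(s)=\log s+O(|s|^{-1})$ and the lower bound above for $\Rep(L'/L(1-s,\overline{\chi}))$, one gets $\Rep\Phi(s)\to-\infty$ uniformly on $\Imp(s)=\pm T$ as $T\to\infty$. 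Hence $\Phi$ maps the boundary of the rectangle into the open left half-plane, its winding number about $0$ is $0$, and the argument principle gives $Z-P=0$ for the numbers $Z,P$ of zeros and poles of $\Phi$ inside; since $P=1$, $L'(s,\chi)$ has exactly one zero in $-2\le\Rep(s)\le 0$. I expect the main obstacle to be the displayed elementary inequality with the exact constants---the thresholds $q\ge 7$ and $q\ge 23$ are precisely what it delivers---which calls for a careful two-variable estimate of $\Rep(\Gamma'/\Gamma)$ and of the tangent/cotangent terms on the strips, in particular the verification that no interior point beats the boundary limit at $s=-\kappa$; a lesser technical point is keeping track of the cancelling poles of $\Gamma'/\Gamma$ and $f_{\kappa}'/f_{\kappa}$ at the integer points of the strips.
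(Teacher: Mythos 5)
Your setup is sound as far as it goes: your $\Phi(s)$ is exactly $(L'/L)(s,\chi)$ rewritten via the functional equation (it coincides with the paper's \eqref{eq:fteq}), the factorization correctly isolates the simple pole at $s=-\kappa$, your Hadamard-product lower bound for $\Rep(L'/L)(1-s,\overline{\chi})$ is the analogue of \eqref{eq:Hadamard3}, and the sufficient condition you display, with the thresholds $\pi e^{c_E}$ and $4\pi e^{c_E}$, is the right target. The genuine gap is that this displayed inequality is never proved: you explicitly defer the maximization of $-\Rep(f_\kappa'/f_\kappa)(s)-\Rep(\Gamma'/\Gamma)(s)+\tfrac12\Rep(\Gamma'/\Gamma)(\tfrac{1-s+\kappa}{2})$ over the strip (even case) and over the three boundary lines (odd case), and that maximization \emph{is} the theorem. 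Moreover, the one concrete claim you make about it is wrong in the odd case: as $s\to-\kappa=-1$ along $\Rep(s)=-1$ the left-hand side tends to $\tfrac12 c_E-\log 2$, not $\tfrac12 c_E$; the value $\tfrac12 c_E$ is the limit as $s\to 0$ along $\Rep(s)=0$, where the poles of $\Gamma'/\Gamma$ and $f_1'/f_1$ cancel. Had the maximum really sat at $s=-1$ with the value one actually computes there, your threshold would come out as $q>\pi e^{c_E}$, i.e.\ $q\geq 7$, rather than $q\geq 23$. This internal inconsistency shows the extremal analysis has not been carried out, and it is not a formality: one must also rule out interior points, the line $\Rep(s)=-2$ for all $t$, and the other integer points where $f_\kappa'/f_\kappa$ and $\Gamma'/\Gamma$ have cancelling or non-cancelling poles.

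For comparison, the paper never attempts a global two-variable bound. It establishes $\Rep(L'/L)<0$ only on the boundary of a contour, piece by piece: on $\Rep(s)=0$ via \eqref{eq:Hadamard2} combined with the vertical monotonicity \eqref{eq:Gammaineq} of $\Rep(\Gamma'/\Gamma)$ (with a small left indentation at $s=0$ in the even case); on the left vertical line $\Rep(s)=-2j-\kappa+1$ via Proposition \ref{Prop:negative2}, which uses the functional equation plus an Euler-product bound with explicit numerical constants; and for large $|t|$ via Theorem \ref{Thm1}. It then applies the argument principle. Proposition \ref{Prop:negative2} is a warning about how delicate your deferred step is: for $q=7$, $\kappa=0$ the margin there is about $4\times 10^{-4}$, and $q=5$ needs a separate numerical treatment along an entire vertical line. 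Your single-inequality route is not hopeless (pointwise, your Hadamard-type lower bound is in places sharper than the paper's Euler-product bound), but as written the proof is incomplete at its decisive step, and the stated location of the extremum is incorrect.
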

\begin{Remark}
When $\chi(-1)=1$ and $q\geq 216$, a zero of $L'(s,\chi)$ corresponding to $s=0$
is expected to appear in $\{\sigma+it:0<\sigma<1/2,~t\in\R\}$.
In fact, Y\i ld\i r\i m \cite[Theorem 1]{Yi} has shown that
there is a unique zero of $L'(s,\chi)$ in $0\leq\Rep(s)<1/2$
and this zero is located near $s=0$,
assuming GRH for $L(s,\chi)$.
\end{Remark}
Based on Theorems \ref{Thm1}--\ref{Thm4}, we reconsider
Y\i ld\i r\i m's classification on zeros of $L'(s,\chi)$.
Except for the finite number of primitive Dirichlet characters $\chi$,
there is a one-to-one correspondence between zeros of $L'(s,\chi)$ in $\Rep(s)<0$
and trivial zeros of $L(s,\chi)$ in $\Rep(s)<0$,
thanks to Theorems \ref{Thm2} and \ref{Thm4}.
For the excluded characters $\chi$,
there are at most a finite number of zeros of $L'(s,\chi)$
on $-1\leq\Rep(s)\leq 0$ when $\chi$ is even and
on $-2\leq\Rep(s)\leq 0$ when $\chi$ is odd, thanks to Theorem \ref{Thm1}.
Keeping these in mind, we propose to modify Y\i ld\i r\i m's
classification on zeros of $L'(s,\chi)$
in the following way:
\begin{itemize}
 \item trivial zeros, which are located on $\Rep(s)\leq 0$,
 \item nontrivial zeros, which are located in $\Rep(s)> 0$.
\end{itemize}
\begin{Remark}
Let $\chi(-1)=1$ and $q\geq 216$.
 As was mentioned in Remark of Theorem \ref{Thm4},
we expect that a zero of $L'(s,\chi)$, which corresponds to the trivial zero $s=0$
of $L(s,\chi)$, appears in $0<\Rep(s)<1/2$.
Though this zero is nontrivial according to
the above classification,
we may say that this zero is trivial.
However, since this does not affect the present paper,
we do not discuss it.
\end{Remark}

We turn to the distribution of nontrivial zeros for $L'(s,\chi)$.
Let $\chi$ be a primitive Dirichlet character modulo $q>1$.
We put
\[
 m:=\min\{n\in\Z_{\geq 2}:\chi(n)\neq 0\}.
\]
We easily see $m=\min\{n\in\Z_{\geq 2}:\gcd(n,q)=1\}$.
This together with the prime number theorem yields $m\ll \log q$.
For $T>0$ we denote by $N_1(T,\chi)$  the number of zeros
of $L'(s,\chi)$ on $\{\sigma+it:\sigma>0, -T\leq t\leq T\}$,
counted with multiplicity.
We note that our $N_1(T,\chi)$ differs from Y\i ld\i r\i m's in
\cite[\S 6]{Yi} slightly,
because he counted vagrant zeros of $L'(s,\chi)$ as well as nontrivial
zeros of $L'(s,\chi)$.
With this notation we have
\begin{Theorem}\label{Thm5}
Retain the above notation. Then for $T\geq 2$ we have
\[
 N_1(T,\chi)=\frac{T}{\pi}\log\frac{qT}{2\pi m}
-\frac{T}{\pi}
+O(m^{1/2}\log(qT)),
\]
where the implied constant is absolute.
\end{Theorem}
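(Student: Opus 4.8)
The plan is to apply the argument principle to $L'(s,\chi)$ on a rectangle whose width grows like $m$. Since $\chi$ is primitive and non‑principal, $L(s,\chi)$ and hence $L'(s,\chi)$ are entire, and $m$ is prime; isolating the first nonzero term of the Dirichlet series $-L'(s,\chi)=\sum_{n\ge2}\chi(n)(\log n)n^{-s}$ gives
\[
-L'(s,\chi)=\chi(m)(\log m)\,m^{-s}\,\mathcal G(s),\qquad
\mathcal G(s):=1+\sum_{\substack{n>m\\(n,q)=1}}\frac{\chi(n)\log n}{\chi(m)\log m}\Bigl(\frac mn\Bigr)^{s}\quad(\Rep(s)>1),
\]
where $\mathcal G$ continues to the entire function $-L'(s,\chi)m^{s}/(\chi(m)\log m)$, whose zeros are exactly those of $L'(s,\chi)$. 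For $\sigma=\Rep(s)\ge2$ one has $|\mathcal G(s)-1|\le(\log m)^{-1}\sum_{n>m}(\log n)(m/n)^{\sigma}\ll m/\sigma$, so there is an absolute constant $C>1$ with $|\mathcal G(s)-1|<\tfrac12$, hence $L'(s,\chi)\ne0$, for $\Rep(s)\ge\sigma_{0}:=Cm$. Choose $\eta\in(0,1-\Theta(\chi))$ small enough that $L'$ has no zeros in $\{0<\Rep(s)\le\eta,\ |\Imp(s)|\le T\}$ (possible since such zeros are finite in number with positive real parts, using Theorem~\ref{Thm1} near $\Imp(s)=0$) and so that the segment $\Rep(s)=\eta$ avoids the points $1-\rho$ with $\rho$ a zero of $L(\cdot,\bar\chi)$; choose $T$ so that no zero of $L'$ lies on $\Imp(s)=\pm T$. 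Then $N_{1}(T,\chi)=\tfrac1{2\pi}\Delta_{\partial\mathcal R}\arg L'(s,\chi)$, where $\mathcal R$ is the rectangle with corners $\eta\pm iT$, $\sigma_{0}\pm iT$ and $\Delta_{\gamma}$ denotes the variation of the argument along $\gamma$; it remains to evaluate the four sides.

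On the right side $\Rep(s)=\sigma_{0}$ we have $\arg(m^{-\sigma_{0}-it})=-t\log m$ and $|\mathcal G-1|<\tfrac12$, hence $\Delta_{\mathrm{right}}\arg L'=\Delta\arg(m^{-s})+\Delta\arg\mathcal G=-2T\log m+O(1)$. On the left side I would use the functional equation in the form $L'(s,\chi)=X(s,\chi)L(1-s,\bar\chi)\bigl(\tfrac{X'}{X}(s,\chi)-\tfrac{L'}{L}(1-s,\bar\chi)\bigr)$, where $X(s,\chi)=\varepsilon(\chi)(q/\pi)^{1/2-s}\Gamma(\tfrac{1-s+\kappa}2)/\Gamma(\tfrac{s+\kappa}2)$. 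Stirling's formula gives $\Delta_{\mathrm{left}}\arg X(s,\chi)=2T\log\tfrac{qT}{2\pi}-2T+O(1)$. Since $\Rep(1-s)=1-\eta>\Theta(\chi)$ on the left side, $\log L(1-s,\bar\chi)$ is single‑valued there and, by the standard bounds for $L$ in and near the zero‑free region, $|\log L(1-s,\bar\chi)|\ll\log(qT)$, so $\Delta_{\mathrm{left}}\arg L(1-s,\bar\chi)=O(\log(qT))$. Lastly, the Hadamard factorization of $L(\cdot,\bar\chi)$ gives $\Rep\tfrac{L'}{L}(1-s,\bar\chi)\ge-\tfrac12\log\tfrac{q|t|}{2\pi}+O(1)$ on the left side, while $\Rep\tfrac{X'}{X}(s,\chi)\sim-\log\tfrac{q|t|}{2\pi}$; hence the bracket has negative real part (away from a bounded range of $t$, handled separately by Jensen's inequality on a small disk) and contributes $O(1)$. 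Combining, $\tfrac1{2\pi}(\Delta_{\mathrm{right}}+\Delta_{\mathrm{left}})\arg L'=\tfrac T\pi\log\tfrac{qT}{2\pi m}-\tfrac T\pi+O(\log(qT))$, the desired main term.

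The contribution of the top and bottom sides is $O(m^{1/2}\log(qT))$, and I expect this to be the main obstacle, the difficulty being that $\mathcal R$ has width $\asymp m$. On a horizontal side $\arg(m^{-s})$ is constant, so one must bound the variation of $\arg\mathcal G(\sigma\pm iT)$ for $\sigma\in[\eta,\sigma_{0}]$, and I would split this at $\sigma\asymp m^{1/2}$. For $\eta\le\sigma\le m^{1/2}$ one applies Backlund's method: $\Delta\arg L'\ll1+\#\{\sigma:\Rep L'(\sigma\pm iT,\chi)=0\}$, and the count is bounded by Jensen's inequality applied, on a disk of radius $\asymp m^{1/2}$, to the entire function $z\mapsto\tfrac12\bigl(L'(z\pm iT,\chi)+\overline{L'(\bar z\pm iT,\bar\chi)}\bigr)$, which is real on $\R$; this disk reaches only to $\Rep(s)\asymp-m^{1/2}$, where the functional equation yields $\log|L'(s,\chi)|\ll m^{1/2}\log(q(|t|+2))$, so this piece is $O(m^{1/2}\log(qT))$. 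For $m^{1/2}\le\sigma\le\sigma_{0}$ one writes $\mathcal G(s)=1+P(s)+E(s)$ with $P$ the Dirichlet polynomial formed by the (at most $m$) terms $m<n\le2m$ and $\|E\|_{\infty}<\tfrac14$ on this range (the tail being geometrically small, $\sum_{n>2m}(\log n)(m/n)^{m^{1/2}}\ll m\,2^{-m^{1/2}}$); since a real generalized Dirichlet polynomial with $K$ terms has $O(K)$ real zeros, $\Rep\mathcal G(\sigma\pm iT)$ has $O(m)$ sign changes on this range, so this piece is $O(m)$. As $m\ll\log q\le\log(qT)$ we have $m\ll m^{1/2}\log(qT)$, so the two pieces, hence the top and bottom sides together, are $O(m^{1/2}\log(qT))$. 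Adding the four estimates yields the claimed formula.
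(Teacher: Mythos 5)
Your overall architecture (factor out the leading Dirichlet coefficient via $G(s,\chi)=-m^sL'(s,\chi)/(\chi(m)\log m)$, use the functional equation on the left edge, and fight for an $O(m^{1/2}\log(qT))$ bound on the horizontal edges) is the same as the paper's, which uses Littlewood's lemma on rectangles with left edges at $\sigma=-(1+\kappa)$ and $-(3+\kappa)$ and differences them. But two steps of your version have genuine gaps.

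First, the placement of the left edge at $\Rep(s)=\eta$ with $\eta\in(0,1-\Theta(\chi))$ presupposes $\Theta(\chi)<1$. This is not known: the classical zero-free region degenerates as $|t|\to\infty$, so the supremum of the real parts of the zeros of $L(s,\chi)$ may equal $1$, in which case your interval is empty. The requirement is not cosmetic — you need $\Rep(1-s)=1-\eta>\Theta(\chi)$ so that $\log L(1-s,\overline{\chi})$ is single-valued on the left edge and its argument variation is $O(\log(qT))$; if zeros of $L(\cdot,\overline{\chi})$ lie to the right of $\Rep(w)=1-\eta$, that variation picks up $-2\pi$ times their number, which you cannot control (and controlling it against the corresponding zeros of $L'$ is essentially Theorem \ref{Thm7}, proved later using Theorem \ref{Thm5}). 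The fix is the paper's: put the left edge at a fixed abscissa $\leq -1$, where $\Rep(1-s)>1$ makes all Dirichlet series absolutely convergent, and absorb the $O(1)$ zeros of $L'$ with $-b_\kappa<\beta'\leq 0$ (supplied by Theorems \ref{Thm1}, \ref{Thm2} and \ref{Thm4}) into the error term.

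Second, and more seriously, your treatment of the horizontal edges on $\eta\leq\sigma\leq m^{1/2}$ does not close. Jensen's formula requires a lower bound on $|H|$ at the center of the disk, and the only region where such a bound is available is $\sigma\gg m$ (where $|G-1|<\tfrac12$); there is no admissible center within distance $O(m^{1/2})$ of the segment you want to cover, since $L'(\sigma+iT,\chi)$ may vanish anywhere in $2\leq\sigma\ll m$. A disk centered at $\sigma\asymp m$ large enough to cover $\sigma=\eta$ has radius $\asymp m$ and, naively, reaches to $\Rep(s)\asymp -m$, where $\log|L'|\asymp m\log(qT)$ — losing a factor $m^{1/2}$. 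The paper's key device (Proposition \ref{Prop:argG}) is to center the circle at $11m$ with radius $11m-\sigma+R_0$, $R_0=5$, so that it penetrates only to $\Rep(s)\geq\sigma-5\geq -10$; the arc where the polynomial bound $\log|H|\ll\log(qT)$ must be used has angular measure $O(m^{-1/2})$ by the flatness of a circle of radius $\asymp m$ near its leftmost point, and the remaining arc contributes $O(1)$ via Lemma \ref{Lem:Jensen}. Multiplying by $R/R_0\asymp m$ gives exactly $m^{1/2}\log(qT)$. Your alternative for $m^{1/2}\leq\sigma\leq Cm$ (counting sign changes of $1+\Rep P+\Rep E$ with $P$ a Dirichlet polynomial of length $m$) is also not justified as stated: a bound on the number of real zeros of $\Rep P$ does not bound the sign changes of $1+\Rep P+\Rep E$ unless you also bound $|1+\Rep P|$ from below off a controlled set, and even granting it the resulting $O(m)$ only squeezes under $m^{1/2}\log(qT)$ because $m\ll\log q$. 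The main term computation (right and left vertical edges) is fine and matches the paper's $I_1$ analysis.
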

\begin{Remark}
 It may be interesting to compare our results with \cite[Theorem 4]{Yi}.
Our error term estimate is considerably smaller than
Y\i ld\i r\i m's with respect to the conductor $q$.

See \cite{Be} for a similar formula for nontrivial zeros of $\zeta'(s)$.
\end{Remark}

We also give an asymptotic formula on the horizontal distribution of
nontrivial zeros of $L'(s,\chi)$,
which is an analogue of \cite[Theorem 10]{LM}.
\begin{Theorem}\label{Thm6}
Retain the notation. Then for $T\geq 2$ it holds that
\begin{align*}
 \sum_{\begin{subarray}{c}
        \rho'=\beta'+i\gamma',\\
        \beta'>0, -T\leq\gamma'\leq T
       \end{subarray}}
\left(\beta'-\frac{1}{2}\right)
&=\frac{T}{\pi}\log\log\frac{qT}{2\pi}
+\frac{T}{\pi}\left(\frac{1}{2}\log m-\log\log m\right)\\
&-\frac{2}{q}\li\left(\frac{qT}{2\pi}\right)
+O(m^{1/2}\log(qT)),
\end{align*}
where $\rho'=\beta'+i\gamma'$ runs over all zeros of $L'(s,\chi)$
satisfying $\beta'>0$ and $-T\leq\gamma'\leq T$,
counted with multiplicity and
\[
 \li(x):=\int_2^x\frac{du}{\log u}.
\]
Here the implied constant is absolute.
\end{Theorem}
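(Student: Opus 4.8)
The plan is to imitate the proof of \cite[Theorem 10]{LM}: apply Littlewood's lemma to $L'(s,\chi)$ on a rectangle and evaluate the resulting boundary integrals, while keeping track of the dependence on $q$ and $m$. Throughout I may assume $q$ and $T$ are not both bounded, since otherwise both sides are $O(m^{1/2}\log(qT))$ and there is nothing to prove. Fix $a\in(-1,0)$; by Theorem \ref{Thm4} the line $\Rep(s)=a$ carries at most one zero of $L'(s,\chi)$, which (if present) has bounded ordinate and contributes only an integrable ($O(1)$) singularity to the integrals below. Using Y\i ld\i r\i m's zero‑free half–plane together with a direct estimate of the Dirichlet series $L'(s,\chi)=-\chi(m)(\log m)m^{-s}\bigl(1+h(s)\bigr)$, where $h(s)=\sum_{\gcd(n,q)=1,\ n>m}\frac{\chi(n)\log n}{\chi(m)\log m}(m/n)^{s}$, fix $b=b(q)$ to the right of every zero of $L'(s,\chi)$, chosen so that $|h(s)|<1$ on $\Rep(s)\geq b$. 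Perturbing $T$ slightly so that no zero lies on the horizontal edges, Littlewood's lemma applied on $\mathcal R=\{\sigma+it:a\le\sigma\le b,\ |t|\le T\}$ expresses $2\pi\sum_{\rho'\in\mathcal R}(b-\beta')$ as a combination of $\int_{-T}^{T}\log|L'|$ over the two vertical edges and $\int\arg L'$ over the two horizontal edges. Since every nontrivial zero of $L'(s,\chi)$ with $|\gamma'|\le T$ lies in $\mathcal R$ (and, by Theorems \ref{Thm2} and \ref{Thm4}, at most one further zero of $L'$ does), and since $\sum(\beta'-\tfrac12)=\sum(\beta'-a)+(a-\tfrac12)N_1(T,\chi)+O(1)$, it suffices, after invoking Theorem \ref{Thm5} for $N_1(T,\chi)$, to evaluate those boundary integrals.

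The left‑hand vertical edge produces the main terms. Write $\log|L'(a+it,\chi)|=\log|L(a+it,\chi)|+\log\bigl|\tfrac{L'}{L}(a+it,\chi)\bigr|$. The functional equation for $L(s,\chi)$ together with Stirling's formula gives, for $|t|$ large, $\log|L(a+it,\chi)|=\tfrac{1-2a}{2}\log\tfrac{q|t|}{2\pi}+O(1)$ and $\tfrac{L'}{L}(a+it,\chi)=-\log\tfrac{q|t|}{2\pi}+O(|t|^{-1})+\sum_{n}\Lambda(n)\overline{\chi}(n)n^{-(1-a)}n^{it}$, whence $\log\bigl|\tfrac{L'}{L}(a+it,\chi)\bigr|=\log\log\tfrac{q|t|}{2\pi}+(\text{an error that integrates against }dt\text{ to }O(m^{1/2}\log(qT)))$; here one uses $\int_{2}^{T}n^{it}(\log\tfrac{qt}{2\pi})^{-1}\,dt\ll(\log n)^{-1}$ after an integration by parts, summed against $\Lambda(n)n^{-(1-a)}(\log n)^{-1}$ and noting that the least prime $\nmid q$ is $m$. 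Cutting the integral at $t_{0}=4\pi/q$ (so the inner limit of the substitution $u=qt/2\pi$ is $2$) and using $\int\log\log u\,du=u\log\log u-\li(u)$, one obtains
\[
\int_{-T}^{T}\log|L'(a+it,\chi)|\,dt=(1-2a)\Bigl(T\log\tfrac{qT}{2\pi}-T\Bigr)+2T\log\log\tfrac{qT}{2\pi}-\tfrac{4\pi}{q}\li\Bigl(\tfrac{qT}{2\pi}\Bigr)+O(\log(qT)).
\]
After combining with $(a-\tfrac12)N_1(T,\chi)$ from Theorem \ref{Thm5}, the $(1-2a)(T\log\tfrac{qT}{2\pi}-T)$ term is absorbed into a clean $T\log m$, leaving $\tfrac{T}{\pi}\log\log\tfrac{qT}{2\pi}-\tfrac2q\li(\tfrac{qT}{2\pi})$ together with $\tfrac{T}{\pi}\cdot\tfrac12\log m$.

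On the right‑hand vertical edge, $\log|L'(b+it,\chi)|=\log\log m-b\log m+\log|1+h(b+it)|$, and expanding $\log(1+h)$ in a Dirichlet series and integrating over $|t|\le T$ kills all nonzero frequencies, giving $\int_{-T}^{T}\log|L'(b+it,\chi)|\,dt=2T\log\log m-2bT\log m+O(m)$; this contributes the term $-\tfrac{T}{\pi}\log\log m$ to the final formula and a $bT\log m$ that must cancel. On the horizontal edges one writes $\arg L'(\sigma\pm iT,\chi)=\mp T\log m+\arg(-\chi(m)\log m)+\arg(1+h(\sigma\pm iT))$; the constant and $\mp T\log m$ parts contribute exactly the $O(m)$‑admissible constant and the cancelling $bT\log m$, while the variation of $\arg(1+h(\sigma+iT))$ across $[a,b]$ is estimated by counting sign changes of $\Rep L'$ on horizontal segments, exactly as in the proof of Theorem \ref{Thm5}. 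Collecting all contributions yields the stated asymptotic. The main obstacle is precisely this last error analysis: because $m$ may be as large as a constant times $\log q$, the strip in which $L'(s,\chi)$ is not well approximated by its leading term $-\chi(m)(\log m)m^{-s}$ is wide (of width $\asymp m/\log m$), so one must show that (i) the zeros of $L'(s,\chi)$ lying far to the right of the critical line contribute only $O(m^{1/2}\log(qT))$ to $\sum(\beta'-\tfrac12)$, and (ii) the horizontal‑edge $\arg$–integral over that wide strip is likewise $O(m^{1/2}\log(qT))$. These are the same delicate estimates that underpin the error term in Theorem \ref{Thm5}, and I would import them from there.
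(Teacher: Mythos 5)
Your overall architecture coincides with the paper's: Littlewood's lemma on a rectangle, the left vertical edge evaluated through the functional equation (this produces the $\log\log\frac{qT}{2\pi}$ and $\li$ terms), the $m^{-s}$ normalization controlling the right edge and the linear part of the horizontal arguments, and the remaining argument variation imported from the machinery behind Theorem \ref{Thm5} (Proposition \ref{Prop:argG}). The paper packages the normalization into $G(s,\chi)=-\frac{m^s}{\chi(m)\log m}L'(s,\chi)$, sends the right edge to $+\infty$, places the left edge at $-b_\kappa\le -1$ rather than at $a\in(-1,0)$, and recovers $\sum(\beta'-\frac12)$ from $\sum(\beta'+b_\kappa)$ via Theorem \ref{Thm5}; these are cosmetic differences, and your bookkeeping (the mutual cancellation of the $a$- and $b$-dependent terms) does check out.

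The step that does not survive as written is your evaluation of $\int\log\bigl|\frac{L'}{L}(a+it,\chi)\bigr|\,dt$. After extracting $\log\log\frac{q|t|}{2\pi}$ you are left with $\int\log|1+x(t)|\,dt$ where $x(t)=(F'/F)(a+it)^{-1}\,\frac{L'}{L}(1-a-it,\overline{\chi})$, and you dispose of it by oscillating only the \emph{linear} term $\Rep(x)$. But $\log|1+x|=\Rep(x)-\frac12\Rep(x^2)+\cdots$, and if the quadratic and higher terms are merely bounded by $|x(t)|^2$, their diagonal part contributes $\asymp\bigl(\sum_n\Lambda(n)^2n^{-2(1-a)}\bigr)\int_{t_0}^T(\log\frac{qt}{2\pi})^{-2}dt\gg T/\log^2(qT)$, which is not $O(m^{1/2}\log(qT))$ for large $T$. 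This must be repaired, either by observing that every power $x^k$ is again a Dirichlet series all of whose frequencies $n_1\cdots n_k\ge 2^k$ oscillate (which forces $a$ to be bounded away from $0$ and a cutoff $t\ge t_0$ with $t_0$ an \emph{absolute} constant so that $|x|$ is uniformly small -- your cutoff $4\pi/q$ is too low, and near $t=2\pi/q$ the factor $F'/F$ is not even bounded away from $0$), or by the paper's cleaner device: the branch of $\log\bigl(1-(F'/F)(s,\chi)^{-1}\frac{L'}{L}(1-s,\overline{\chi})\bigr)$ is holomorphic and $O(2^\sigma)$ on $\{\sigma\le-1,\ t\ge t_0\}$, so Cauchy's theorem on a triangle reaching to $\Rep(s)=-T$ shows the whole vertical integral is $O(1)$ in one stroke. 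Two smaller instances of the same issue: your pointwise ``$+O(1)$'' in $\log|L(a+it,\chi)|$ integrates only to $O(T)$, so you need the Stirling error $O(1/t)$ for the gamma factor together with oscillation for $\log|L(1-a-it,\overline{\chi})|$; and the range $|t|\le t_0$ must be treated crudely (the paper gets $O(\log q)$ there from Proposition \ref{Prop:negative2}). With these repairs the proposal goes through and is essentially the paper's proof.
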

Finally we consider zeros of $L'(s,\chi)$ in $0<\Rep(s)<1/2$.
The old celebrated theorem of Speiser \cite{Sp} says that
the Riemann hypothesis is equivalent to having no non-real zeros
of $\zeta'(s)$ in $\Rep(s)<1/2$.
In this paper we give an analogue of Speiser's theorem for each
$L(s,\chi)$ with large $q$,
which characterize GRH in terms of
nontrivial zeros of $L'(s,\chi)$.
\begin{Theorem}\label{Thm8}
 Let $\chi$ be an even primitive Dirichlet character with
conductor $q\geq 216$.
Then the following (i) and (ii)
are equivalent:
\begin{enumerate}
\item[(i)] $L(s,\chi)\neq 0$ in $0<\Rep(s)<1/2$.
\item[(ii)] There is a unique zero of $L'(s,\chi)$ in $0<\Rep(s)<1/2$.
\end{enumerate}
\end{Theorem}
\begin{Theorem}\label{Thm9}
Let $\chi$ be an odd primitive Dirichlet character
with conductor $q\geq 23$.
Then the following conditions (i) and (ii) are equivalent:
\begin{enumerate}
\item[(i)] $L(s,\chi)\neq 0$ in $0<\Rep(s)<1/2$.
\item[(ii)] There are no zeros of $L'(s,\chi)$ in $0<\Rep(s)<1/2$.
\end{enumerate}
\end{Theorem}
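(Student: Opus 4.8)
I would follow Speiser's method, in the form developed by Levinson and Montgomery \cite{LM}. First, (i) can be reduced to a statement about all nontrivial zeros: since $\xi(s,\chi)=W(\chi)\xi(1-s,\bar\chi)$ and $\overline{\xi(s,\bar\chi)}=\xi(\bar s,\chi)$, a point $\rho$ with $0<\Rep(\rho)<1$ is a zero of $L(s,\chi)$ if and only if $1-\bar\rho$ is, so, together with the non-vanishing of $L(s,\chi)$ for $\Rep(s)\geq1$ and the location of the trivial zeros, condition (i) is equivalent to $\Theta(\chi)=1/2$, i.e.\ to GRH for $L(s,\chi)$. The engine of the proof is the logarithmic functional equation: writing $L(s,\chi)=\varepsilon(\chi)G(s)L(1-s,\bar\chi)$ with $G(s)=(q/\pi)^{1/2-s}\Gamma(\tfrac{1-s+\kappa}{2})/\Gamma(\tfrac{s+\kappa}{2})$ gives $\tfrac{L'}{L}(s,\chi)=\tfrac{G'}{G}(s)-\tfrac{L'}{L}(1-s,\bar\chi)$, and combining this with the Hadamard product of $\xi(s,\bar\chi)$ and the identity $\Rep B(\bar\chi)=-\sum_{\tilde\rho}\Rep(1/\tilde\rho)$ yields, for $\Rep(s)<1$ with $L(s,\chi)\neq0$,
\[
 \Rep\frac{L'}{L}(s,\chi)=-\frac12\log\frac q\pi-\frac12\Rep\frac{\Gamma'}{\Gamma}\!\left(\frac{s+\kappa}{2}\right)-\sum_{\tilde\rho}\frac{1-\Rep(s)-\Rep(\tilde\rho)}{|1-s-\tilde\rho|^{2}},
\]
the sum over nontrivial zeros $\tilde\rho$ of $L(s,\bar\chi)$; on $\Rep(s)=1/2$ the functional equation moreover gives $2\Rep\tfrac{L'}{L}(s,\chi)=\tfrac{G'}{G}(s)\in\R$. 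Because $0<\Rep(\tilde\rho)<1$ always and, for $\kappa=1$, monotonicity of $\Gamma'/\Gamma$ gives $\Rep\tfrac{\Gamma'}{\Gamma}(\tfrac{s+1}{2})\geq\tfrac{\Gamma'}{\Gamma}(1/2)=-\gamma-2\log2$ on $0\leq\Rep(s)\leq1/2$, one deduces from these two formulas — invoking only $q\geq23$, which is precisely $\log(q/\pi)>\gamma+2\log2$ (threshold $4\pi e^{\gamma}\approx22.4$) — that $\Rep\tfrac{L'}{L}(s,\chi)<0$ on each of the lines $\Rep(s)=0$ and $\Rep(s)=1/2$, unconditionally.

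For the implication (i)$\Rightarrow$(ii): under (i) every $\tilde\rho$ has $\Rep(\tilde\rho)=1/2$, so in the displayed identity each summand is $(1/2-\Rep(s))/|1-s-\tilde\rho|^{2}>0$ for $0<\Rep(s)<1/2$ while the first two terms are negative by the $q\geq23$ bound; hence $\Rep\tfrac{L'}{L}(s,\chi)<0$, so (as $L(s,\chi)\neq0$ there) $L'(s,\chi)\neq0$ throughout $0<\Rep(s)<1/2$. For (ii)$\Rightarrow$(i) I would argue by contradiction: assume (ii) but $L(s_{0},\chi)=0$ for some $s_{0}$ with $0<\Rep(s_{0})<1/2$. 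By (ii), $L$ has only simple zeros in $S:=\{0<\Rep(s)<1/2\}$ and $F:=\tfrac{L'}{L}(\cdot,\chi)$ is holomorphic and zero-free on $S$ away from its simple poles at zeros of $L$; moreover $\Rep F<0$ on $\partial S$ and $\Rep F(s)<0$ for $s\in S$ near any zero of $L$ on $\Rep(s)=1/2$, so the open set $\Omega_{+}=\{s\in S:\Rep F(s)>0\}$ has $\overline{\Omega_{+}}\cap\partial S=\emptyset$, and it is nonempty since $\Rep F(s)=\Rep(s-s_{0})/|s-s_{0}|^{2}+O(1)>0$ for $s$ just to the right of $s_{0}$. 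On the component $U$ of $\Omega_{+}$ containing such points, $1/F=L/L'$ is holomorphic with $\Rep(1/F)=\Rep F/|F|^{2}>0$, whereas on $\partial U$ one has either $\Rep F=0$ (hence $\Rep(1/F)=0$) or a zero of $L$ (hence $1/F\to0$, so $\Rep(1/F)\to0$); the maximum principle for the harmonic function $\Rep(1/F)$ on $U$ then gives $\Rep(1/F)\leq0$ on $U$, a contradiction. Hence (i).

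The hard part will be justifying that last use of the maximum principle when $U$ is unbounded, i.e.\ controlling $\Rep(1/F)=\Rep(L/L')$ as $\Imp(s)\to\pm\infty$ inside $U$. My plan is to sharpen the displayed identity — using the classical expansion $\tfrac{L'}{L}(w,\bar\chi)=\sum_{|\Imp(w)-\Imp(\tilde\rho)|\leq1}(w-\tilde\rho)^{-1}+O(\log(q(|\Imp w|+2)))$ for bounded $\Rep(w)$ — to show that $\Rep F(s)>0$ forces $s$ to lie within a small distance of a zero of $L(s,\chi)$, where $|L/L'(s)|$ is correspondingly small; this yields $\limsup\Rep(1/F)\leq0$ at every boundary point of $U$, the point at infinity included, so the maximum principle applies. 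Theorem \ref{Thm8} is established the same way; there the larger bound $q\geq216$ appears because for even $\chi$ ($\kappa=0$) the argument $\tfrac{s+\kappa}{2}$ of $\Gamma'/\Gamma$ can be $0$, so the sign of $\tfrac{G'}{G}$ near $s=0$ is no longer automatic, and the zero of $L'(s,\chi)$ associated with the trivial zero $s=0$ of $L(s,\chi)$ remains in $0<\Rep(s)<1/2$, which is exactly the unique zero asserted in (ii) of Theorem \ref{Thm8}.
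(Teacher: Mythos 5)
Your reduction of (i) to GRH, your displayed identity (which is the paper's equation (2.3) rewritten via the functional equation), and your proof of (i)$\Rightarrow$(ii) are all sound and agree with the paper (which attributes that direction to Y\i ld\i r\i m). The problem is the direction (ii)$\Rightarrow$(i), which is the actual content of the theorem. Your maximum-principle argument on $\Rep(L/L')$ over a component $U$ of $\{s:0<\Rep(s)<\tfrac12,\ \Rep(L'/L)(s,\chi)>0\}$ is complete only when $U$ is bounded, and you correctly identify the unbounded case as "the hard part" --- but the plan you offer for it does not work. From the identity, $\Rep(L'/L)(\sigma+it,\chi)\geq 0$ with $0<\sigma<\tfrac12$ forces $J(\sigma+it,\chi)<0$, hence the existence of a zero $\rho=\beta+i\gamma$ with $\beta<\tfrac12$ and $(\beta-\tfrac12)^2>(\sigma-\tfrac12)^2+(t-\gamma)^2$; this only gives $|t-\gamma|<\tfrac12$, i.e.\ a GRH-violating zero within distance $O(1)$, not within $o(1)$. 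Being at distance $\asymp 1$ from a zero of $L$ gives no useful upper bound on $|L/L'|$ (nor does the expansion $\frac{L'}{L}(s)=\sum_{|\gamma-t|\leq 1}(s-\rho)^{-1}+O(\log q\tau)$ force $s$ to be very close to a zero when $\Rep(L'/L)(s)>0$: the sum has $\asymp\log q\tau$ terms, each bounded by $|s-\rho|^{-1}$, so it can exceed $\log q\tau$ with all zeros at distance $\asymp 1$). So you cannot conclude $\limsup\Rep(L/L')\leq 0$ at infinity, and a Phragm\'en--Lindel\"of argument would require a pointwise growth bound on $1/|L'/L|$ in $U$ that you do not have (nothing prevents $|L'/L|$ from being merely barely positive there).

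The paper closes exactly this gap by a different mechanism, and it is worth seeing how. It turns the same local observation --- $\Rep(L'/L)\geq 0$ somewhere at height $t$ in the strip implies a zero $\beta+i\gamma$ with $\beta<\tfrac12$ and $|t\mp\gamma|<\tfrac12$ --- into a counting dichotomy (Proposition \ref{Prop:Sp1}): either $N^{-}(T,\chi)\gg T$, or there is a sequence $T_j\to\infty$ of heights at which $\Rep(L'/L)<0$ on the whole boundary of the rectangle $[0,\tfrac12]\times[-T_j,T_j]$ (with indentations), whence the argument principle gives the exact equality $N_1^{-}(T_j,\chi)=N^{-}(T_j,\chi)$ for odd $\chi$. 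The quantitative Speiser theorem (Theorem \ref{Thm7}, proved by the Littlewood-lemma/argument-count machinery of \S 3) shows that under (ii) one has $N^{-}(T,\chi)=O_\chi(\log T)$, which kills the first alternative; the second alternative together with (ii) then forces $N^{-}(T_j,\chi)=0$ for all $j$, i.e.\ (i). If you want to keep your harmonic-function framework you would need either to prove that every component $U$ is bounded, or to supply a genuine Phragm\'en--Lindel\"of hypothesis; as written, the step is a gap, and the counting route is the one that actually closes it.
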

\begin{Remark}
 The implications (i)$\Longrightarrow$(ii) in Theorems \ref{Thm8}
and \ref{Thm9} have been obtained by  Y\i ld\i r\i m
\cite[Theorem 1]{Yi}.
Our contribution in this paper is to establish the implications
(ii)$\Longrightarrow$(i).
\end{Remark}
Theorems \ref{Thm8} and \ref{Thm9} give us analogues of
Speiser's theorem only when $q$ are large to some extent.
However, it is highly possible to formulate similar assertions
when $q$ are small,
by investigating the variation of $\arg{(L'/L)}(s,\chi)$ on $\Rep(s)=0$
and $\Rep(s)=1/2$ through numerical calculations.

In this paper we also show the quantitative version of
Speiser's theorem for $L'(s,\chi)$, which is stated as follows:
\begin{Theorem}\label{Thm7}
 For $T\geq 2$ we have
\begin{equation}\label{eq:SC1}
 N^{-}(T,\chi)=N^{-}_1(T,\chi)+O(m^{1/2}\log(qT)),
\end{equation}
where $N^{-}(T,\chi)$ and $N^{-}_1(T,\chi)$ are the number of zeros
of $L(s,\chi)$ and $L'(s,\chi)$ on $\{\sigma+it:0<\sigma<1/2, -T\leq
 t\leq T\}$ respectively,
where zeros are counted with multiplicity.
Here the implied constant is absolute.
\end{Theorem}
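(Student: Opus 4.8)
The plan is to prove \eqref{eq:SC1} by relating the zero counts of $L(s,\chi)$ and $L'(s,\chi)$ in the critical strip region $0<\sigma<1/2$ through an argument-principle comparison along the boundary of that region. Concretely, I would integrate $\frac{1}{2\pi i}\,d\log(L'/L)(s,\chi)$ around the rectangle with vertical sides $\Rep(s)=0$ and $\Rep(s)=1/2$ and horizontal sides $\Imp(s)=\pm T$. The interior poles of $(L'/L)(s,\chi)$ are exactly the zeros of $L(s,\chi)$ with $0<\Rep(s)<1/2$ (counted with multiplicity, each contributing a pole with residue equal to minus its multiplicity), while the interior zeros of $(L'/L)(s,\chi)$ are the zeros of $L'(s,\chi)$ there that are not also zeros of $L$; since by Theorem \ref{Thm1} (taking $\sigma\le 1-\Theta(\chi)$, hence covering $\sigma\le 1/2$) together with the zero-density input, common zeros of $L$ and $L'$ in the strip are controlled, one checks they contribute compatibly. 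Thus the contour integral evaluates to $N_1^-(T,\chi)-N^-(T,\chi)$ up to a bounded discrepancy from zeros on the horizontal edges $\Imp(s)=\pm T$ and from shared zeros, which is absorbed into the $O(m^{1/2}\log(qT))$ error after choosing $T$ (or perturbing it by $O(1)$) to avoid ordinates of zeros.

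The main work is then to bound the change in $\arg(L'/L)(s,\chi)$ along each of the four sides. On the horizontal segments $\Imp(s)=\pm T$ the variation of the argument is $O(\log(qT))$ by a standard Jensen-type / Backlund argument: the number of sign changes of $\Rep(L'/L)$ along a horizontal segment is bounded by the number of zeros of $L'/L$ in a disc of bounded radius, which is $O(\log(qT))$ via the functional equation and Stirling; this is the same mechanism already used to prove Theorem \ref{Thm5}, so I would cite the relevant intermediate lemmas from the proof of Theorem \ref{Thm5} rather than redo them. On the line $\Rep(s)=1/2$, $L'(s,\chi)$ has essentially no zeros with bounded imaginary part only under GRH, so one cannot assume nonvanishing there; instead I expect the variation $\Delta\arg(L'/L)$ on $\Rep(s)=1/2$ to have already been computed (up to $O(m^{1/2}\log(qT))$) in the course of proving Theorem \ref{Thm5} or Theorem \ref{Thm6}, because $N_1^-$ differs from $N_1(T,\chi)$ of Theorem \ref{Thm5} precisely by the count between $\Rep(s)=0$ and $\Rep(s)=1/2$. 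On the line $\Rep(s)=0$, I would use the asymptotic $(L'/L)(s,\chi)=\log m + \chi(m)^{-1}\big(\text{something}\big)\cdot m^{-s}+\cdots$ coming from the Dirichlet series $-(L'/L)(s,\chi)=\sum \Lambda_\chi(n) n^{-s}$ re-expanded as $(L'/L)(s,\chi) = \log m + O(m^{1/2}\cdot(\text{geometric}))$ for $\Rep(s)$ not too large; combined with the functional equation (to pass from $\Rep(s)=0$ to a region of absolute convergence) this yields $\Delta\arg(L'/L)$ on $\Rep(s)=0$ equal to $O(m^{1/2}\log(qT))$, with the main term $\frac{T}{\pi}\log m$-type contributions cancelling between the two vertical sides in the difference $N_1^- - N^-$.

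The key cancellation to check is that the \emph{main terms} in the two counting functions $N^-(T,\chi)$ and $N_1^-(T,\chi)$ agree: $N^-(T,\chi)$ is obtained from $\arg L(s,\chi)$ on the same rectangle, and the difference $\arg(L'/L) = \arg L' - \arg L$ means $N_1^- - N^- = \frac{1}{2\pi}\big[\Delta\arg(L'/L)\big]_{\partial R}$, so no main term survives provided the boundary variations of $\arg(L'/L)$ are all genuinely $O(m^{1/2}\log(qT))$ — which is exactly what the three paragraphs above establish side by side. The hard part will be the line $\Rep(s)=0$: there $L(s,\chi)$ itself can be small (its size is governed by the functional-equation factor and by possible nearby zeros on $\Rep(s)=1/2$ under failure of GRH), so bounding $\arg L'(s,\chi)$ there requires care, and I expect to handle it by writing $L'(s,\chi) = L(s,\chi)\cdot(L'/L)(s,\chi)$, controlling $\arg L(s,\chi)$ on $\Rep(s)=0$ via the functional equation and the Hadamard product (this is classical and uniform in $q$), and controlling $\arg(L'/L)(s,\chi)$ via the Dirichlet series expansion with its leading term $\log m$ and error terms of size $O(m^{-1/2})$ relative to $\log m$, which is where the $m^{1/2}$ in the error ultimately enters. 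Once all four sides are bounded, \eqref{eq:SC1} follows immediately by comparing with the argument-principle expression for $N^-(T,\chi)$.
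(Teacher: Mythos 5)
Your overall framework is the right one --- the paper also applies the argument principle to $(L'/L)(s,\chi)$ on the rectangle with vertical sides $\Rep(s)=0$ and $\Rep(s)=1/2$, so that the main terms cancel identically and only boundary argument variations remain --- but there is a genuine gap in how you handle the two vertical sides, and it is precisely there that the real content of the theorem lies. On $\Rep(s)=1/2$ you ``expect the variation to have already been computed in the course of proving Theorem \ref{Thm5} or \ref{Thm6}''; it has not. Those theorems use the Littlewood lemma on a rectangle whose left edge is at $\Rep(s)=-b_\kappa$ and never touch the critical line. Without further input, the variation of $\arg(L'/L)$ along $\Rep(s)=1/2$ is not controllable by soft means: $(L'/L)(s,\chi)$ has a pole at every critical-line zero of $L(s,\chi)$, there are as many as $\asymp T\log(qT)$ of these, and a priori each could contribute a full winding, swamping the claimed error term. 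The idea you are missing is Lemma \ref{Lem:center} of the paper: under the hypotheses $\kappa=0$, $q\ge 216$ or $\kappa=1$, $q\ge 23$, one has $\Rep(L'/L)(\tfrac12+it,\chi)<0$ for every $t$ with $L(\tfrac12+it,\chi)\ne 0$, because in the identity \eqref{eq:Hadamard4} the zero-sum $J(s,\chi)$ is multiplied by $\sigma-\tfrac12=0$ and the remaining digamma and $\log q$ terms are negative. One then indents the contour by small left semicircles at the critical-line zeros; since $L'/L$ stays in the left half-plane on the whole (indented) vertical side, its argument varies by $O(1)$ there. The same mechanism (via \eqref{eq:Hadamard2} and the estimates in the proof of Theorem \ref{Thm4}) handles $\Rep(s)=0$; your proposed substitute --- a Dirichlet-series expansion of $L'/L$ with leading term $\log m$ --- is not available on $\Rep(s)=0$, where the series does not converge, and the functional-equation detour you sketch does not obviously recover a sign or a winding bound.

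Two smaller corrections. First, the source of the $m^{1/2}$ in the error term is the \emph{horizontal} sides, not the line $\Rep(s)=0$: the paper bounds $\Delta\arg L'(\sigma\pm iT,\chi)$ by $\Delta\arg G(\sigma\pm iT,\chi)$ and invokes Proposition \ref{Prop:argG}, which gives $O(m^{1/2}\log(qT))$ for $-5\le\sigma\le 3$, while $\Delta\arg L$ there is $O(\log(qT))$ by the classical bound; your claimed $O(\log(qT))$ for the full horizontal variation of $\arg(L'/L)$ is not what a Jensen-type argument yields here uniformly in $q$. Second, the negativity lemma requires $q\ge 216$ (even case) or $q\ge 23$ (odd case), so the finitely many remaining characters must be disposed of separately (the paper cites Garunk\v{s}tis--\v{S}im\.{e}nas for fixed $\chi$); your proposal does not address this case split.
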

\begin{Remark}
In \cite[Theorem 1.2]{GS}
Garunk\v{s}tis and \v{S}im\.{e}nas
have obtained (\ref{eq:SC1}) for fixed $\chi$.
The new element of this paper is to give uniform estimates
with respect to $\chi$.
\end{Remark}

This paper is organized as follows.
In \S 2 we treat zeros of $L'(s,\chi)$ on a left half-plane,
which includes the proof of Theorems \ref{Thm1}--\ref{Thm4}.
In \S 3 we deal with the distribution of zeros of $L'(s,\chi)$ in
$\Rep(s)>0$ and show Theorems \ref{Thm5} and \ref{Thm6}.
In \S 4 we show Theorems \ref{Thm8}--\ref{Thm7}.

In this paper we need some numerical computations.
We use PARI/GP 2.7.3.
\subsection*{Notation}
Throughout this paper let $\chi$ be primitive Dirichlet
characters modulo $q>1$.
For $\chi$ we put $m:=\min\{n\in\Z_{\geq 2}:\chi(n)\neq 0\}$.
The number $\kappa\in\{0,1\}$ is determined by $\chi(-1)=(-1)^{\kappa}$.
We denote nontrivial zeros of $L(s,\chi)$
(i.e., zeros in $\{\sigma+it:0<\sigma<1\}$) by
$\rho=\beta+i\gamma$
and zeros of $L'(s,\chi)$ by $\rho'=\beta'+i\gamma'$.
We put $\Theta(\chi):=\sup_{\rho}\Rep(\rho)$.
$c_E$ is the Euler--Mascheroni constant.
\section{Zeros of $L'(s,\chi)$ on a left half-plane}

In this section we prove Theorems \ref{Thm1}--\ref{Thm4}.
Our strategy for showing these is to seek paths and regions
on which $\Rep(L'/L)(s,\chi)$ is negative.

First of all we show Theorem \ref{Thm1}.
It suffices to show the following:
 \begin{Proposition}\label{Prop:negative1}
Keep the notation in Theorem \ref{Thm1}.
Then $\Rep(L'/L)(s,\chi)<0$ holds on $s\in\mathcal{D}_1(\chi)\cup\mathcal{D}_2(\chi)$.
\end{Proposition}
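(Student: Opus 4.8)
The plan is to exploit the partial fraction (Hadamard-type) expansion of $(L'/L)(s,\chi)$ together with the functional equation. Recall that for a primitive character $\chi$ modulo $q$ one has
\[
\frac{L'}{L}(s,\chi)
= B(\chi) - \frac{1}{2}\log\frac{q}{\pi}
- \frac{1}{2}\frac{\Gamma'}{\Gamma}\!\left(\frac{s+\kappa}{2}\right)
+ \sum_{\rho}\left(\frac{1}{s-\rho}+\frac{1}{\rho}\right),
\]
where $\rho$ runs over the nontrivial zeros and $\Rep B(\chi) = -\sum_{\rho}\Rep(1/\rho)$. Substituting the last identity cancels the $1/\rho$ terms and gives
\[
\Rep\frac{L'}{L}(s,\chi)
= -\frac{1}{2}\log\frac{q}{\pi}
- \frac{1}{2}\Rep\frac{\Gamma'}{\Gamma}\!\left(\frac{s+\kappa}{2}\right)
+ \sum_{\rho}\Rep\frac{1}{s-\rho}.
\]
For $s=\sigma+it$ with $\sigma \le 1-\Theta(\chi)$, every nontrivial zero $\rho=\beta+i\gamma$ satisfies $\beta \le \Theta(\chi) \le 1-\sigma$, hence $\sigma - \beta \le \sigma - (1-\Theta(\chi)) \le 0$ in the worst case; more carefully, $\sigma \le 1-\Theta(\chi) \le 1-\beta$, so $\Rep(1/(s-\rho)) = (\sigma-\beta)/|s-\rho|^2$. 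The key point is that $\sigma - \beta \le 1 - 2\beta \le 0$ when $\beta \ge 1/2$; combined with $\Rep(1/(s-\rho))$ being controlled, the zero-sum over $\rho$ contributes at most a small positive amount coming only from the requirement $|t|$ not too small, and this is where the hypothesis $|t|\ge 6/\log q$ enters. So the first step is to bound $\sum_\rho \Rep(1/(s-\rho))$ from above by something like $O(\log(q(|t|+2)))$ minus the dominant negative contribution.

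The second step is to estimate the $\Gamma$-term from below. On $\mathcal{D}_1(\chi)$ we have $\sigma\le 1-\Theta(\chi)\le 1/2$, so $\Rep((s+\kappa)/2)\le 3/4$, and on $\mathcal{D}_2(\chi)$ we have $\sigma\le -q^2$, deep in the left half-plane. In the left half-plane one uses the reflection/recursion for $\Gamma'/\Gamma$, or directly the asymptotic $\frac{\Gamma'}{\Gamma}(z) = \log z + O(1/|z|)$ valid off the negative real axis, to show $\Rep\frac{\Gamma'}{\Gamma}\!\left(\frac{s+\kappa}{2}\right)$ is large and positive (of size $\log|s|$) precisely when $s$ is bounded away from the negative real axis — which is exactly what the conditions $|t|\ge 6/\log q$ (resp.\ $|t|\ge 12/\log|\sigma|$) guarantee. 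Then $-\frac12\Rep\frac{\Gamma'}{\Gamma}$ is a large negative term, and the plan is to show it dominates the (at most logarithmic) positive contribution from the zero-sum. For $\mathcal{D}_1(\chi)$ the balance is between $-\frac12\log q$ together with the $\Gamma$-contribution against $O(\log q)$ from the zeros near $|t|$ small; here the explicit constant $6$ is tuned so that the $\Gamma$-term near the real axis, which behaves like $-\frac12\log(1/|t|) \asymp -\frac12\log\log q$... no — rather, the point is that for $|t|\ge 6/\log q$ the real part of $\Gamma'/\Gamma$ stays $\gtrsim -\log\log q$ while $-\frac12\log q$ wins outright. For $\mathcal{D}_2(\chi)$, once $\sigma\le -q^2$ the trivial-zero structure dominates: $(L'/L)(s,\chi)\approx \log|s|$-type growth from $\Gamma'/\Gamma$, and one checks directly that its real part is negative whenever $|t|\ge 12/\log|\sigma|$ using the asymptotic expansion of $\log\frac{s+\kappa}{2}$.

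I would carry this out in the order: (1) write down the partial-fraction identity and reduce to bounding $\sum_\rho\Rep(1/(s-\rho))$ from above, using $\beta\le\Theta(\chi)$ and the classical zero-counting estimate $\sum_\rho 1/(1+(t-\gamma)^2) \ll \log(q(|t|+2))$; (2) on $\mathcal{D}_1$, combine this with a lower bound for $\Rep\frac{\Gamma'}{\Gamma}((s+\kappa)/2)$ valid for $\Rep(s)\le 1/2$, choosing the cutoff $|t|\ge 6/\log q$ to make the $-\frac12\log q$ term decisive; (3) on $\mathcal{D}_2$, use the asymptotic $\frac{\Gamma'}{\Gamma}(z)=\log z+O(1/|z|)$ and elementary geometry to show $\Rep\log\frac{s+\kappa}{2}>0$ with room to spare once $|t|\ge 12/\log|\sigma|$ and $\sigma\le -q^2$, dwarfing the $O(\log(q|s|))$ zero-sum (which itself is negligible relative to $\log|\sigma|\ge 2\log q$). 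The main obstacle is step (2): getting the constant $6$ to actually work requires a reasonably sharp two-sided control of $\Rep\frac{\Gamma'}{\Gamma}$ close to the real axis together with a clean explicit version of the zero-density bound, so that no unquantified $O(1)$ slips in; this is presumably why the Remark notes the constants can be improved — the delicate region is a thin horizontal strip $|t|\asymp 1/\log q$ near the real axis where several logarithmic quantities compete.
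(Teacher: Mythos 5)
Your overall framework (Hadamard expansion, take real parts, then fight the $\Gamma$-term) matches the paper's, but there are two genuine gaps, and the first one is fatal to the argument as you have set it up.

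First, the zero sum. The paper's entire point on $\mathcal{D}_1(\chi)$ is that the sum $\sum_\rho(\sigma-\beta)/|s-\rho|^2$ is \emph{termwise nonpositive} there, so it can simply be discarded. This follows because the functional equation pairs each zero $\rho$ with the zero $1-\overline{\rho}$, whence every zero satisfies $\beta\geq 1-\Theta(\chi)\geq\sigma$. You only derive $\sigma\leq 1-\beta$ (i.e.\ $\sigma-\beta\leq 1-2\beta$), which gives nonpositivity only for $\beta\geq 1/2$, and for the remaining zeros you fall back on the classical bound $\sum_\rho(1+(t-\gamma)^2)^{-1}\ll\log(q\tau)$, conceding that "the zero-sum contributes at most a small positive amount". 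That concession kills the proof: an upper bound of the form $+C\log(q\tau)$ with an unspecified absolute constant $C$ cannot be beaten by the only available negative term $-\frac{1}{2}\log\frac{q}{\pi}$ plus a $\Gamma$-contribution that is itself at best $O(\log q)$ near the real axis. No zero-density input is needed or usable here; you need the inequality $\beta\geq 1-\Theta(\chi)$ for \emph{every} zero.

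Second, the $\Gamma$-term near the negative real axis. The asymptotic $\frac{\Gamma'}{\Gamma}(z)=\log z+O(1/|z|)$ is valid only in sectors $|\arg z|\leq\pi-\delta$; near the negative real axis (where $\Gamma'/\Gamma$ has poles) the correct error is of size $1/|\Imp z|$, as in the paper's explicit bound $\Rep\frac{\Gamma'}{\Gamma}(z)\geq\log|z|-\frac{\pi}{2|y|}$. On both $\mathcal{D}_1$ and $\mathcal{D}_2$ the relevant points lie at distance $\asymp 1/\log q$, resp.\ $\asymp 1/\log|\sigma|$, from the real axis, so this error term is of the \emph{same order} as the main term ($\frac{\pi}{2|t|}\leq\frac{\pi}{12}\log q$ versus $\frac{1}{2}\log q$ on $\mathcal{D}_1$, and $\frac{\pi}{2|t|}\leq\frac{\pi}{24}\log|\sigma|$ versus $\frac{1}{2}\log|\sigma|$ on $\mathcal{D}_2$); it is not negligible, and the constants $6$ and $12$ are chosen precisely so that $\pi/6<1$ and $\pi/24<1/2$ make the main term win. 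Treating this error as $O(1/|z|)$ would wrongly suggest the hypotheses on $|t|$ could be dropped, contradicting the existence of the real zeros $\alpha_j(\chi)$ of $L'(s,\chi)$ near the negative real axis.
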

In order to show Proposition \ref{Prop:negative1},
we start with the logarithmic derivative of
the Hadamard product expression for $L(s,\chi)$.
It is given by (see \cite[Corollary 10.18]{MV}):
\begin{equation}\label{eq:Hadamard}
 \frac{L'}{L}(s,\chi)=
B(\chi)-\frac{1}{2}\frac{\Gamma'}{\Gamma}\left(\frac{s+\kappa}{2}\right)
-\frac{1}{2}\log\frac{q}{\pi}+\sum_{\rho}\left(\frac{1}{s-\rho}+\frac{1}{\rho}\right),
\end{equation}
where $B(\chi)$ is a constant depending only on $\chi$, which satisfies
\[
 \Rep(B(\chi))=-\sum_{\rho}\Rep\frac{1}{\rho}.
\]
 Taking the real part on (\ref{eq:Hadamard}),
\begin{equation}\label{eq:Hadamard3}
 \Rep\frac{L'}{L}(s,\chi)
=-\frac{1}{2}\log\frac{q}{\pi}
-\frac{1}{2}\Rep\frac{\Gamma'}{\Gamma}\left(\frac{s+\kappa}{2}\right)
+\sum_{\rho=\beta+i\gamma}
\frac{\sigma-\beta}{|s-\rho|^2}.
\end{equation}
We see from  the definition of $\Theta(\chi)$ and the functional equation
that $\beta\geq 1-\Theta(\chi)$ holds for any $\rho$.
Thus we find $\sigma-\beta\leq 0$ if $\sigma\leq 1-\Theta(\chi)$.
This says that the sum over nontrivial zeros is nonpositive
on $\{s=\sigma+it:\sigma\leq 1-\Theta(\chi), L(s,\chi)\neq 0\}$.
Thus, on this region the following inequality holds:
\begin{equation}\label{eq:Hadamard2}
 \Rep\frac{L'}{L}(s,\chi)
\leq
-\frac{1}{2}\log\frac{q}{\pi}
-\frac{1}{2}\Rep\frac{\Gamma'}{\Gamma}\left(\frac{s+\kappa}{2}\right).
\end{equation}

To estimate the last term on (\ref{eq:Hadamard2}),
we show the following inequality:
\begin{Lemma}
 For $z=x+iy$ with $x\in\R$ and $y\in\R\setminus\{0\}$ we have
\begin{equation}\label{eq:Gammaineq0}
 \Rep\frac{\Gamma'}{\Gamma}(z)\geq\log|z|-\frac{\pi}{2|y|}.
\end{equation}
\end{Lemma}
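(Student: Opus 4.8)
The plan is to prove \eqref{eq:Gammaineq0} first on the half‑plane $\Rep(z)>0$ by Binet's first formula, and then transfer it to $\Rep(z)\le 0$ via the reflection formula $\tfrac{\Gamma'}{\Gamma}(z)=\tfrac{\Gamma'}{\Gamma}(1-z)-\pi\cot(\pi z)$. Since $\overline{(\Gamma'/\Gamma)(z)}=(\Gamma'/\Gamma)(\bar z)$ while $|z|$ and $|y|$ are unchanged under $z\mapsto\bar z$, I may assume $y>0$.

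\emph{Case $x:=\Rep(z)>0$.} Differentiating Binet's first formula gives, for $\Rep(z)>0$,
\[
\frac{\Gamma'}{\Gamma}(z)=\Log z-\int_0^\infty H(t)e^{-zt}\,dt,\qquad H(t):=\frac1{1-e^{-t}}-\frac1t ,
\]
where an elementary check shows $H$ is increasing with $H(0^+)=\tfrac12$, $\lim_{t\to\infty}H(t)=1$, so $0<H(t)<1$ and $\int_0^\infty H'(t)\,dt=\tfrac12$. Taking real parts, $\Rep\tfrac{\Gamma'}{\Gamma}(z)=\log|z|-\int_0^\infty H(t)e^{-xt}\cos(yt)\,dt$, and I bound this integral two ways: directly, $\bigl|\int_0^\infty H(t)e^{-xt}\cos(yt)\,dt\bigr|<\int_0^\infty e^{-xt}\,dt=\tfrac1x$; and, after one integration by parts (the boundary term vanishes because $H(0^+)\cdot0=0$ and $x>0$), $\bigl|\int_0^\infty H(t)e^{-xt}\cos(yt)\,dt\bigr|<\tfrac1{|y|}\bigl(\int_0^\infty H'(t)\,dt+x\int_0^\infty e^{-xt}\,dt\bigr)=\tfrac{3}{2|y|}$. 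Hence
\[
\Rep\frac{\Gamma'}{\Gamma}(z)>\log|z|-\min\!\Bigl(\tfrac1x,\ \tfrac{3}{2|y|}\Bigr)\qquad(x>0),
\]
which already yields \eqref{eq:Gammaineq0} here because $\tfrac32<\tfrac\pi2$.

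\emph{Case $x\le 0$.} Apply the reflection formula. Then $\Rep(1-z)=1-x\ge1$, so the previous case applied to $1-z$ (whose imaginary part has modulus $y$) gives $\Rep\tfrac{\Gamma'}{\Gamma}(1-z)>\log|1-z|-\min(1,\tfrac{3}{2y})$, and $|1-z|^2-|z|^2=1-2x\ge1$ forces $\log|1-z|\ge\log|z|$. For the cotangent I use $\Rep\cot(\pi z)=\dfrac{\sin(2\pi x)}{2(\sin^2\pi x+\sinh^2\pi y)}$ and maximize the right‑hand side over $x$ (setting $s=\sin^2\pi x$), which gives $\Rep\bigl(\pi\cot(\pi z)\bigr)\le\dfrac{\pi}{\sinh(2\pi y)}$. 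Combining,
\[
\Rep\frac{\Gamma'}{\Gamma}(z)>\log|z|-\min\!\Bigl(1,\ \tfrac{3}{2y}\Bigr)-\frac{\pi}{\sinh(2\pi y)} ,
\]
so \eqref{eq:Gammaineq0} reduces to the one‑variable inequality $\min(1,\tfrac{3}{2y})+\tfrac{\pi}{\sinh(2\pi y)}\le\tfrac{\pi}{2y}$ for all $y>0$: for $y\ge\tfrac32$ this reads $\sinh(2\pi y)\ge\tfrac{2\pi y}{\pi-3}$, and for $0<y<\tfrac32$ it reads $\tfrac{\pi}{2y}-\tfrac{\pi}{\sinh(2\pi y)}\ge1$; the first follows from a growth comparison, and the second because its left side is decreasing in $y$ and still exceeds $1$ at $y=\tfrac32$.

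The delicate point — and the reason for carrying the ``$\min$'' — is that near the poles of $\Gamma'/\Gamma$ on the negative real axis the integration‑by‑parts bound $\tfrac3{2|y|}$ degenerates like $\tfrac1y$ as $y\to0$; it is the reflection formula, which moves $z$ to the tame point $1-z$ with $\Rep(1-z)\ge1$, where the bound $\min(1,\cdot)$ is available, that rescues the argument. I expect the final $\sinh$‑inequality, carried out with the exact constant $\tfrac\pi2$ rather than some larger absolute constant, to be the only step requiring a little numerical care.
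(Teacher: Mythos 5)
Your proof is correct, but it follows a genuinely different route from the paper's. The paper works with the single Euler--Maclaurin (Stirling-type) representation $\tfrac{\Gamma'}{\Gamma}(z)=\Log z-\tfrac{1}{2z}+\int_0^{\infty}\tfrac{u-[u]-1/2}{(u+z)^2}\,du$, valid on all of $\C\setminus(-\infty,0]$, and bounds the two error terms crudely: by $\bigl(\tfrac12+\tfrac{\pi}{4}\bigr)/|y|$ when $x\geq 0$, and when $x<0$ by noting $\Rep(1/z)\le 0$ has the right sign and the integral is at most $\tfrac12\int_{-\infty}^{\infty}\tfrac{du}{u^2+y^2}=\tfrac{\pi}{2|y|}$; no reflection formula and no sharp constants are needed. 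You instead use the Gauss/Binet integral $\tfrac{\Gamma'}{\Gamma}(z)=\Log z-\int_0^\infty H(t)e^{-zt}dt$ on $\Rep(z)>0$ (your two bounds $1/x$ and $3/(2|y|)$ check out, using $0<H<1$, $H$ increasing, $\int H'=\tfrac12$), then transfer to $\Rep(z)\le 0$ via the reflection formula, with the exact maximization $|\Rep\cot(\pi z)|\le 1/\sinh(2\pi y)$ and $\log|1-z|\ge\log|z|$. The trade-off: the paper's argument is shorter, uniform over the slit plane, and has generous numerical slack, whereas yours isolates the poles on the negative axis explicitly in the cotangent but ends in a genuinely tight one-variable inequality $\min(1,\tfrac{3}{2y})+\tfrac{\pi}{\sinh(2\pi y)}\le\tfrac{\pi}{2y}$ whose margin is only $(\pi-3)/(2y)$ for large $y$; I verified that both branches of that inequality (the growth comparison for $y\ge\tfrac32$ and the monotonicity claim for $0<y<\tfrac32$, which amounts to $\pi\sinh^2 u\ge u^2\cosh u$) do hold, so your sketch closes, but these routine verifications should be written out since the constant $\tfrac{\pi}{2}$ leaves so little room.
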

\begin{proof}
 We start with the following expression
for $(\Gamma'/\Gamma)(z)$ (see \cite[(C.10) in p.522]{MV}):
\begin{equation}\label{eq:HadamardG}
 \frac{\Gamma'}{\Gamma}(z)
=-c_E-\sum_{n=0}^{\infty}
\left(\frac{1}{n+z}-\frac{1}{n+1}\right).
\end{equation}
Suppose $z=x+iy\in\C\setminus(-\infty,0]$.
Applying the Euler--Maclaurin summation formula
(that is, \cite[Theorem B.5 when $K=1$]{MV}),
we have
\begin{equation}\label{eq:EM}
\lim_{N\to\infty}\left(\sum_{n=0}^N\frac{1}{n+z}-\Log(N+z)\right)
=-\Log z+\frac{1}{2z}-\int_0^{\infty}\frac{u-[u]-\frac{1}{2}}{(u+z)^2}du,
\end{equation}
where $\Log z$ is the principal logarithmic branch of $z$.
By the definition of $c_E$ we have
\begin{equation}\label{eq:Euler}
 \lim_{N\to\infty}
\left(
\sum_{n=0}^N\frac{1}{n+1}-\log N
\right)=c_E.
\end{equation}
We subtract (\ref{eq:Euler}) from (\ref{eq:EM}) and apply it
to (\ref{eq:HadamardG}).
In consequence we have
\begin{equation}\label{eq:diGamma}
 \frac{\Gamma'}{\Gamma}(z)=\Log z-\frac{1}{2z}
+\int_0^{\infty}\frac{u-[u]-\frac{1}{2}}{(u+z)^2}du.
\end{equation}
Taking the real part, we obtain
\begin{equation}\label{eq:ReGamma}
 \Rep\frac{\Gamma'}{\Gamma}(z)
=\log|z|-\frac{1}{2}\Rep\frac{1}{z}
+\Rep\int_0^{\infty}\frac{u-[u]-\frac{1}{2}}{(u+z)^2}du.
\end{equation}

We consider the case $x\geq 0$ and $y\neq 0$.
Then we have
\begin{gather*}
 \left|\Rep\frac{1}{z}\right|\leq\frac{1}{|z|}\leq\frac{1}{|y|},\\
\left|\Rep\int_0^{\infty}\frac{u-[u]-\frac{1}{2}}{(u+z)^2}du\right|
\leq\frac{1}{2}\int_0^{\infty}\frac{du}{|u+z|^2}
\leq\frac{1}{2}\int_0^{\infty}\frac{du}{u^2+y^2}
=\frac{\pi}{4|y|}.
\end{gather*}
Inserting these into (\ref{eq:ReGamma}), we obtain
\[
 \Rep\frac{\Gamma'}{\Gamma}(z)\geq\log|z|-
\left(\frac{1}{2}+\frac{\pi}{4}\right)\frac{1}{|y|}.
\]
This yields the result when $x\geq 0$.

We consider the case $x<0$ and $y\neq 0$.
In this case $\Rep(1/z)$ is negative.
In addition, a standard estimate gives
\[
 \left|\Rep\int_0^{\infty}\frac{u-[u]-\frac{1}{2}}{(u+z)^2}du\right|
\leq\frac{1}{2}\int_{-\infty}^{\infty}\frac{du}{u^2+y^2}
=\frac{\pi}{2|y|}.
\]
Applying these to (\ref{eq:ReGamma}), we obtain the result
when $x<0$.
\end{proof}
\begin{proof}[Proof of Proposition \ref{Prop:negative1}]
Suppose that $s=\sigma+it$ satisfies
$\sigma\leq 1-\Theta(\chi)$, $t\neq 0$ and $L(s,\chi)\neq 0$.
Inserting (\ref{eq:Gammaineq0}) into (\ref{eq:Hadamard2}),
\[
 \Rep\frac{L'}{L}(s,\chi)
\leq-\frac{1}{2}\log\left|\frac{q(s+\kappa)}{2\pi}\right|
+\frac{\pi}{2|t|}.
\]
Thus we obtain the following inequalities:
\begin{align}
&\Rep\frac{L'}{L}(s,\chi)
\leq-\frac{1}{2}\log\frac{q|t|}{2\pi}
+\frac{\pi}{2|t|},\label{eq:Lineq1}\\
&\Rep\frac{L'}{L}(s,\chi)
\leq-\frac{1}{2}\log\frac{q|\sigma+\kappa|}{2\pi}
+\frac{\pi}{2|t|}.\label{eq:Lineq2}
\end{align}
If $|t|\geq 6/\log q$, then (\ref{eq:Lineq1}) is bounded above by
\begin{equation}\label{eq:Lineq3}
 \leq -\frac{1}{2}\left(1-\frac{\pi}{6}\right)\log q
+\frac{1}{2}\log\log q+\frac{1}{2}\log\frac{\pi}{3}.
\end{equation}
 We can easily check that $x^{\alpha}/\log x\geq \alpha e$
 holds for $x>1$ and $\alpha>0$.
 Taking the logarithm, we have $\alpha\log x-\log\log x\geq 1+\log\alpha$.
 Applying this with $\alpha=1-\frac{\pi}{6}$ and $x=q$,
 we see that (\ref{eq:Lineq3}) is
\[
 \leq -\frac{1}{2}\log\left(1-\frac{\pi}{6}\right)-\frac{1}{2}
+\frac{1}{2}\log\frac{\pi}{3}<-0.106.
\]
This confirms that $\Rep(L'/L)(s,\chi)$ is negative
on $s\in\mathcal{D}_1(\chi)$.
It is easy to check from (\ref{eq:Lineq2}) that
$\Rep(L'/L)(s,\chi)$ is negative on $s\in\mathcal{D}_2(\chi)$,
whose details are omitted.
\end{proof}
\begin{proof}[Proof of Theorem \ref{Thm1}]
 Theorem \ref{Thm1} is an immediate consequence of Proposition
\ref{Prop:negative1}.
\end{proof}
Next we prove Theorem \ref{Thm2}.
For this purpose we show
\begin{Proposition}\label{Prop:negative2}
Keep the notation in Theorem \ref{Thm2}.
 Then for each $j\in\Z_{\geq 1}$, the inequality
$\Rep(L'/L)(s,\chi)\leq -10^{-4}(<0)$ holds
on $\Rep(s)=-2j-\kappa+1$.

\end{Proposition}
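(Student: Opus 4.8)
The plan is to read off the sign of $\Rep(L'/L)(s,\chi)$ directly from the Hadamard formula (\ref{eq:Hadamard3}). First note that on the line $\Rep(s)=-2j-\kappa+1\le -1$ the function $L(s,\chi)$ has no zero (its only zeros with $\Rep(s)\le 0$ are the trivial ones at $s=-2\ell-\kappa$, $\ell\in\Z_{\ge 0}$, none of which lies on this line, since $-2j-\kappa+1=-2\ell-\kappa$ would force $j-\ell=\tfrac12$), so $(L'/L)(s,\chi)$ is holomorphic there. Every nontrivial zero $\rho=\beta+i\gamma$ satisfies $0<\beta<1$, hence $\sigma-\beta<\sigma\le -1<0$ and the sum over $\rho$ in (\ref{eq:Hadamard3}) is negative; discarding it gives
\[
 \Rep\frac{L'}{L}(s,\chi)\ \le\ -\frac12\log\frac{q}{\pi}-\frac12\Rep\frac{\Gamma'}{\Gamma}(w),
 \qquad w:=\frac{s+\kappa}{2}=-\Bigl(j-\tfrac12\Bigr)+\tfrac{it}{2},
\]
using $\sigma+\kappa=-2j+1$. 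So it suffices to prove $\log(q/\pi)+\Rep\frac{\Gamma'}{\Gamma}(w)\ge 2\cdot 10^{-4}$; the catch is that $\Rep(w)<0$, so the integral representation (\ref{eq:ReGamma}) does not apply directly.

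The key step is to transport the estimate to the half-plane $\Rep>0$ via the reflection formula $\frac{\Gamma'}{\Gamma}(1-w)-\frac{\Gamma'}{\Gamma}(w)=\pi\cot(\pi w)$. Since $\cot$ has period $\pi$ and $\Rep(\pi w)=-\pi(j-\tfrac12)\equiv\tfrac{\pi}{2}\pmod{\pi}$, we get $\cot(\pi w)=\cot\bigl(\tfrac{\pi}{2}+\tfrac{i\pi t}{2}\bigr)=-i\tanh\bigl(\tfrac{\pi t}{2}\bigr)$, which is purely imaginary; hence $\Rep\frac{\Gamma'}{\Gamma}(w)=\Rep\frac{\Gamma'}{\Gamma}(1-w)$. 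Now $1-w=(j+\tfrac12)-\tfrac{it}{2}$ has positive real part, and because $\frac{\Gamma'}{\Gamma}$ has real Taylor coefficients, $\Rep\frac{\Gamma'}{\Gamma}(1-w)=\Rep\frac{\Gamma'}{\Gamma}\bigl((j+\tfrac12)+\tfrac{it}{2}\bigr)$. From (\ref{eq:HadamardG}) one checks that for $x>0$ the map $y\mapsto\Rep\frac{\Gamma'}{\Gamma}(x+iy)$ is nondecreasing in $|y|$, while the recursion $\frac{\Gamma'}{\Gamma}(z+1)=\frac{\Gamma'}{\Gamma}(z)+\tfrac1z$ (whose increment has positive real part when $\Rep(z)>0$) shows that $\Rep\frac{\Gamma'}{\Gamma}\bigl((j+\tfrac12)+iy\bigr)$ is nondecreasing in $j\ge1$. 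Therefore, for all $j\ge 1$ and $t\in\R$,
\[
 \Rep\frac{\Gamma'}{\Gamma}(w)\ \ge\ \frac{\Gamma'}{\Gamma}\Bigl(\tfrac32\Bigr)=2-c_E-2\log 2\ (\approx 0.0365).
\]

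Inserting this, $\Rep(L'/L)(s,\chi)\le -\tfrac12\log(q/\pi)-\tfrac12(2-c_E-2\log 2)$. For $q\ge 4$ the right-hand side is at most $-\tfrac12\log(4/\pi)-\tfrac12(2-c_E-2\log 2)=\tfrac12\log\pi-1+\tfrac12 c_E\approx -0.139<-10^{-4}$; and for $q=3$ with $j\ge 2$ one uses instead $\Rep\frac{\Gamma'}{\Gamma}(w)\ge\frac{\Gamma'}{\Gamma}(\tfrac52)=\frac{\Gamma'}{\Gamma}(\tfrac32)+\tfrac23\approx 0.70$, which again gives $<-10^{-4}$. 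Since there is no primitive character modulo $2$, the only remaining case is $q=3$, $j=1$ (with $\chi$ the odd character modulo $3$), and this is the genuine obstacle: here $\log(3/\pi)<0$ and $\frac{\Gamma'}{\Gamma}(\tfrac32)\approx 0.0365$ is simply too small for the crude estimate to be negative.

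For $q=3$, $j=1$ (so the line is $\Rep(s)=-2$) we keep the zero sum in (\ref{eq:Hadamard3}) and split on $|t|$. If $|t|\ge 4$, then by the monotonicity above $\Rep\frac{\Gamma'}{\Gamma}(w)\ge\Rep\frac{\Gamma'}{\Gamma}(\tfrac32+2i)$, and a routine estimate (e.g.\ through (\ref{eq:ReGamma})) shows this exceeds $2\cdot 10^{-4}-\log(3/\pi)\approx 0.046$ with room to spare, so $\Rep(L'/L)(s,\chi)<-10^{-4}$ already without using the zeros. If $|t|\le 4$, then on $\Rep(s)=-2$ the conjugate pair of lowest zeros $\tfrac12\pm i\gamma_1$ of $L(s,\chi)$ — which occurs because $\chi$ is real, with $\gamma_1>4$ (in fact $\gamma_1\approx 8.04$, located with PARI/GP) — contributes to $\sum_{\rho}\frac{\sigma-\beta}{|s-\rho|^2}$ a negative amount bounded away from $0$ uniformly for $|t|\le 4$, which outweighs the at most $+0.005$ coming from the first two terms. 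Combining the two ranges yields $\Rep(L'/L)(s,\chi)\le -10^{-4}$ in the last case too, and the proposition follows. The only nonroutine ingredients are the reflection reduction in the second paragraph and two elementary numerical facts — one value of $\frac{\Gamma'}{\Gamma}$ and the height of the first zero of $L(s,\chi_3)$.
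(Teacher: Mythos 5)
Your proof is correct, but it follows a genuinely different route from the paper's. The paper starts from the logarithmic derivative of the functional equation, reflecting the line $\Rep(s)=-2j-\kappa+1$ to $\Rep(s)=2j+\kappa\geq 2$, where it bounds $|\Rep(L'/L)|$ by the Euler-product sum $\sum_{p\nmid q}\frac{\log p}{p^{2j+\kappa}-1}$ and bounds the digamma term from below by $\frac{\Gamma'}{\Gamma}(2j+\kappa)$; the bottleneck of that method is $q=5$, $\kappa=0$, $j=1$, which the paper resolves by numerically evaluating $\Rep(L'/L)(2-it,\chi_5)$ at six points on $\Rep(s)=2$ together with a bound on $(L'/L)'$. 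You instead work with the Hadamard formula (\ref{eq:Hadamard3}), discard the (termwise negative) zero sum, and handle the digamma factor at negative real part via the reflection formula: since $\Rep(\pi w)\equiv\pi/2\pmod{\pi}$ on these lines, $\pi\cot(\pi w)$ is purely imaginary and $\Rep\frac{\Gamma'}{\Gamma}(w)=\Rep\frac{\Gamma'}{\Gamma}(1-w)\geq\frac{\Gamma'}{\Gamma}(3/2)>0$ by (\ref{eq:Gammaineq}) and the recursion --- a clean observation the paper does not use. This disposes of every case except $q=3$, $j=1$ (your method's bottleneck is different from the paper's: the paper's $\kappa=1$ case is easy, yours is the hard one), and there you correctly reinstate the zero sum and use the lowest zero of $L(s,\chi_{-3})$ at height $\gamma_1\approx 8.04$; your numerics check out (the pair contributes at most about $-0.07$ uniformly for $|t|\leq 4$, against a positive contribution of at most $0.005$ from the other two terms). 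The trade-off is roughly even: both arguments need one numerical input, yours being the location of one zero of $L(s,\chi_{-3})$ versus the paper's six sampled values of $\Rep(L'/L)(2-it,\chi_5)$ plus a derivative bound; your reduction is arguably more uniform and elementary for all but one character, while the paper's Euler-product bound gives stronger explicit constants for large $j$ since $\frac{\Gamma'}{\Gamma}(2j+\kappa)$ grows like $\log j$. To make your write-up fully rigorous you should state explicitly how the existence of a zero of $L(s,\chi_{-3})$ near height $8.04$ is certified (e.g.\ a sign change of the completed $L$-function), but this is a routine verification of the same nature as the computations already present in the paper.
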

\begin{proof}
We start with the logarithmic derivative of the functional
equation for $L(s,\chi)$, which is written as
(see \cite[p.352]{MV})
\begin{equation}\label{eq:fteq}
 \frac{L'}{L}(s,\chi)
=-\frac{L'}{L}(1-s,\overline{\chi})-\log\frac{q}{2\pi}
-\frac{\Gamma'}{\Gamma}(1-s)+
\frac{\pi}{2}\cot\left(\frac{\pi(s+\kappa)}{2}\right).
\end{equation}
We take $j\in\Z_{\geq 1}$ and $t\in\R$ and put $s=-2j-\kappa+1+it$
on (\ref{eq:fteq}).
Then we take the real part.
Since the last term on (\ref{eq:fteq}) is purely imaginary
on $\Rep(s)=-2j-\kappa+1$,
we have
\begin{equation}\label{eq:fteq2}
\begin{aligned}
&\Rep\frac{L'}{L}(-2j-\kappa+1+it,\chi)\\
&=-\Rep\frac{L'}{L}(2j+\kappa-it,\overline{\chi})-\log\frac{q}{2\pi}
-\Rep\frac{\Gamma'}{\Gamma}(2j+\kappa-it).
\end{aligned}
\end{equation}
Firstly we treat the first term on the right.
Taking the logarithmic derivative of the Euler product
for $L(s,\overline{\chi})$,
in $\Rep(s)>1$ we have
\begin{equation}\label{eq:LDEP}
 \frac{L'}{L}(s,\overline{\chi})
=-\sum_{p:\text{primes}}
\frac{\overline{\chi}(p)\log p}{p^s-\overline{\chi}(p)}.
\end{equation}
Thus we put $s=2j+\kappa-it$ and estimate it trivially,
so that
\begin{equation}\label{eq:Dineq}
 \left|\Rep\frac{L'}{L}(2j+\kappa-it,\overline{\chi})\right|
\leq
\sum_{p\nmid q}
\frac{\log p}{p^{2j+\kappa}-1}.
\end{equation}
Next we deal with the last term on (\ref{eq:fteq2}).
It follows from (\ref{eq:HadamardG}) that
for $z=x+iy$ with $x>0$, $y\in\R$
\begin{equation}\label{eq:Gammaineq}
 \Rep\frac{\Gamma'}{\Gamma}(z)-\frac{\Gamma'}{\Gamma}(x)
=y^2\sum_{n=0}^{\infty}\frac{1}{(n+x)\{(n+x)^2+y^2\}}\geq 0.
\end{equation}
Thus, putting $x=2j+\kappa$ and $y=-t$,
we see that
\begin{equation}\label{eq:Gineq}
 \Rep\frac{\Gamma'}{\Gamma}(2j+\kappa-it)\geq\frac{\Gamma'}{\Gamma}(2j+\kappa)
=-c_E+\sum_{a=1}^{2j+\kappa-1}\frac{1}{a}.
\end{equation}
Applying (\ref{eq:Dineq}) and (\ref{eq:Gineq}) to (\ref{eq:fteq2}),
we obtain
\begin{equation}\label{eq:LDineq}
\Rep\frac{L'}{L}(-2j-\kappa+1+it)\leq A(q,\kappa;j)+B(q,\kappa;j),
\end{equation}
where
\begin{align*}
&A(q,\kappa;j):=c_E-\sum_{a=1}^{2j+\kappa-1}\frac{1}{a}-\log\frac{q}{2\pi},\\
&B(q,\kappa;j):=\sum_{p\nmid q}\frac{\log p}{p^{2j+\kappa}-1}.
\end{align*}
 To compute $B(q,\kappa;j)$ numerically, we note that
\begin{equation}\label{eq:Error1}
 \sum_{p>N}\frac{\log p}{p^{\sigma}-1}
  \leq\frac{N}{N^{\sigma}-1}
  \left(\frac{\log N}{\sigma-1}+\frac{1}{(\sigma-1)^2}\right)
\end{equation}
holds for $\sigma>1$ and $N\in\Z_{\geq 3}$.
In fact, the left-hand side of (\ref{eq:Error1}) is
\[
 \leq\sum_{p>N}\frac{\log p}{p^{\sigma}-(p/N)^{\sigma}}
 =\frac{N^{\sigma}}{N^{\sigma}-1}\sum_{p>N}\frac{\log p}{p^{\sigma}}.
\]
 Plainly this is estimated as
\[
 \leq\frac{N^{\sigma}}{N^{\sigma}-1}
 \sum_{n=N+1}^{\infty}\frac{\log n}{n^{\sigma}}
 \leq\frac{N^{\sigma}}{N^{\sigma}-1}
 \int_N^{\infty}\frac{\log u}{u^{\sigma}}du.
\]
Calculating the integral, we obtain (\ref{eq:Error1}).

We go back to (\ref{eq:LDineq}).
Firstly we consider the case $\kappa=1$.
Since $q\geq 3$ and $j\geq 1$, we have
\begin{align*}
&A(q,1;j)\leq c_E-\frac{3}{2}-\log\frac{3}{2\pi}<-0.183,\\
&B(q,1;j)\leq \sum_{p}\frac{\log p}{p^3-1}<0.174.
\end{align*}
Here we used (\ref{eq:Error1}) with $\sigma=3$ and $N=10$.
This implies the desired result when $\kappa=1$.

We treat the case $\kappa=0$.
We note that $\kappa=0$ implies $q\geq 5$
and that there are no primitive Dirichlet characters modulo $6$.
When $q\geq 8$ and $j\geq 1$, we see from (\ref{eq:Error1})
with $\sigma=2$ and $N=100$ that
\begin{align*}
&A(q,0;j)\leq c_E-1-\log\frac{8}{2\pi}<-0.66,\\
&B(q,0;j)\leq\sum_{p}\frac{\log p}{p^2-1}<0.62.
\end{align*}
When $q=7$ and $j\geq 1$, by (\ref{eq:Error1}) with $\sigma=2$
and $N=10^5$
we have
\begin{align*}
&A(7,0;j)\leq c_E-1-\log\frac{7}{2\pi}<-0.53,\\
&B(7,0;j)\leq\sum_{p\neq 7}\frac{\log p}{p^2-1}<0.5296.
\end{align*}
Thus we obtain the desired result when $q\geq 7$ and $\kappa=0$.

It remains to show the assertion in the case $q=5$ and $\kappa=0$.
Then $\chi$ is determined uniquely and given in terms of the Kronecker
symbol by $\chi(n)=\chi_5(n):=\left(\frac{5}{n}\right)$.
For $j\geq 2$ we see from (\ref{eq:Error1}) with
$\sigma=4$ and $N=10$ that for $j\geq 2$
\begin{align*}
&A(5,0;j)\leq c_E-\frac{11}{6}-\log\frac{5}{2\pi}<-1,\\
&B(5,0;j)\leq \sum_{p\neq 5}\frac{\log p}{p^4-1}<0.07.
\end{align*}
This implies the desired result in the case $j\geq 2$.
We consider the case $j=1$.
Since $\chi_5$ is real, $\Rep(L'/L)(-1+it,\chi_5)=\Rep(L'/L)(-1-it,\chi_5)$ holds
for $t\in\R$.
Thus it suffices to show that $\Rep(L'/L)(-1+it,\chi_5)$
is negative for $t\geq 0$.
For this purpose we use (\ref{eq:fteq2}) with $\chi=\chi_5$ and $j=1$:
\begin{equation}\label{eq:fteq3}
 \Rep\frac{L'}{L}(-1+it,\chi_5)=-\Rep\frac{L'}{L}(2-it,\chi_5)
-\log\frac{5}{2\pi}-\Rep\frac{\Gamma'}{\Gamma}(2-it).
\end{equation}

First of all we consider the case $t\geq 3/2$.
By the same manner as (\ref{eq:Dineq}) we have
\begin{equation}\label{eq:Dineq2}
 \left|\Rep\frac{L'}{L}(2-it,\chi_5)\right|\leq
\sum_{p\neq 5}\frac{\log p}{p^2-1}<0.51.
\end{equation}
Here we used (\ref{eq:Error1}) with $\sigma=2$ and $N=1000$.
It is easy to see that
the right-hand side of (\ref{eq:Gammaineq}) is monotonically decreasing
on $y\leq -3/2$,
so that
\begin{equation}\label{eq:digammaineq}
 \Rep\frac{\Gamma'}{\Gamma}(2-it)
\geq 1-c_E+
\frac{9}{4}\sum_{n=0}^{\infty}\frac{1}{(n+2)\{(n+2)^2+9/4\}}
>0.75
\end{equation}
holds on $t\geq 3/2$.
Here in the first inequality we used $(\Gamma'/\Gamma)(2)=1-c_E$
and in the last inequality we ignored the sum over $n>100$. 
Inserting (\ref{eq:Dineq2}), (\ref{eq:digammaineq})
and $-\log(5/2\pi)<0.23$ into (\ref{eq:fteq3}),
we see that $\Rep(L'/L)(-1+it,\chi_5)<-0.01$ for $t\geq 3/2$.

Finally we consider the case $0\leq t<3/2$.
We deal with the first term on the right-hand side of (\ref{eq:fteq3}).
Using (\ref{eq:LDEP}) and (\ref{eq:Error1}) with $N=1000$,
we compute $\Rep(L'/L)(s,\chi)$ numerically at some points
on $\Rep(s)=2$ as follows:
\begin{equation}\label{eq:numerical}
\begin{gathered}
\frac{L'}{L}(2,\chi_5)>0.27,\phantom{MM}
\Rep\frac{L'}{L}\left(2-\frac{i}{2},\chi_5\right)>0.24,\\
\Rep\frac{L'}{L}(2-i,\chi_5)>0.16,
\phantom{MM}
\Rep\frac{L'}{L}\left(2-\frac{5}{4}i,\chi_5\right)>0.11,\\
\Rep\frac{L'}{L}\left(2-\frac{11}{8}i,\chi_5\right)>0.08,
\phantom{MM}
\Rep\frac{L'}{L}\left(2-\frac{3}{2}i,\chi_5\right)>0.06.
\end{gathered}
\end{equation}
We note that for $t\in\R$ and $t_0\in\{0,1/2,1,5/4,11/8,3/2\}$
\begin{equation}\label{eq:Lintegral}
 \Rep\frac{L'}{L}(2-it,\chi_5)=\Rep\frac{L'}{L}(2-it_0,\chi_5)
+\Imp\int_{t_0}^{t}
\left(\frac{L'}{L}\right)'(2-iv,\chi_5)dv.
\end{equation}
Numerical computation gives that for $v\in\R$
\begin{equation}\label{eq:numerical2}
 \left|\left(\frac{L'}{L}\right)'(2-iv,\chi_5)\right|
\leq\sum_{p\neq 5}\frac{p^{-2}(\log p)^2}{(1-p^{-2})^2}
<0.79.
\end{equation}
Here in the last inequality we computed the sum numerically
up to $10^4$ and we used
\[
 \sum_{p>10^4}\frac{p^{-2}(\log p)^2}{(1-p^{-2})^2}
\leq\frac{1}{(1-10^{-8})^2}\int_{10^4}^{\infty}\frac{(\log u)^2}{u^2}du
<0.011.
\]
We see from (\ref{eq:numerical}), (\ref{eq:Lintegral}) and (\ref{eq:numerical2})
that $\Rep(L'/L)(2-it,\chi_5)>0$ for $0\leq t<3/2$.
This together with (\ref{eq:Gammaineq}) and (\ref{eq:fteq3}) yields
\[
 \Rep\frac{L'}{L}(-1+it,\chi_5)<-\log\frac{5}{2\pi}-1+c_E<-0.1
\]
for $0\leq t<3/2$.
This completes the proof.
\end{proof}
\begin{proof}[Proof of Theorem \ref{Thm2}]
Let $j\in\Z_{\geq 1}$.
Proposition \ref{Prop:negative2} implies that $L'(s,\chi)$
does not vanish on $\Rep(s)=-2j-\kappa-1$.
We shall show that $L'(s,\chi)$ has a unique zero in the strip
$-2j-\kappa-1<\Rep(s)<-2j-\kappa+1$.
We take the path determined by the rectangle
with vertices at $-2j-\kappa\pm 1\pm 1000i$.
By Propositions \ref{Prop:negative1} and \ref{Prop:negative2}
we find that $\Rep(L'/L)(s,\chi)$ is negative on the path.
Thus the argument principle gives that the number of zeros
of $L'(s,\chi)$ inside the path equals that of $L(s,\chi)$.
Since $L(s,\chi)$ has a unique zero $s=-2j-\kappa$
inside the path,
there is a unique zero of $L'(s,\chi)$ inside the path.
Combining this with Theorem \ref{Thm1}, we reach the first claim
of Theorem \ref{Thm2}.
\end{proof}
Next we prove Theorem \ref{Thm3}.
\begin{proof}[Proof of Theorem \ref{Thm3}]
We take $\varepsilon\in(0,1/2)$.
Let $\mathcal{C}=\mathcal{C}_{j,\varepsilon}$
be the path determined by the circle
of the center $-2j-\kappa$ and the radius $\varepsilon$.
Then it is easy to see from (\ref{eq:fteq}) and Stirling's formula
that
\[
\Rep\frac{L'}{L}(s,\chi)=-\log(jq)+\Rep\frac{1}{\eta}+O(1)
\]
holds on $s=-2j-\kappa+\eta\in\mathcal{C}$,
where the implied constant is absolute.
Suppose that $jq$ is sufficiently large and
we choose $\varepsilon=2/\log(jq)$.
Then we find that $\Rep(L'/L)(s,\chi)$ is negative on $s\in\mathcal{C}$.
Thus the argument principle says that there is a unique zero
of $L'(s,\chi)$ inside $\mathcal{C}$ thanks to the trivial
zero $s=-2j-\kappa$ of $L(s,\chi)$.
Since the zero of $L'(s,\chi)$ inside $\mathcal{C}$ coincides
with $\alpha_j(\chi)$,
we obtain $|\alpha_j(\chi)+2j+\kappa|<2/\log(jq)$.
This completes the proof.
\end{proof}
Next we show Theorem \ref{Thm4}.
\begin{proof}[Proof of Theorem \ref{Thm4}]
We consider the case $\chi(-1)=1$.
We will check that the right-hand side of (\ref{eq:Hadamard2})
is negative on $s\in\R\setminus\{0\}$ and near $s=0$
with $\Rep(s)\leq 0$.
Let $t\in\R\setminus\{0\}$.
Then we have
\[
 \Rep\frac{\Gamma'}{\Gamma}\left(\frac{it}{2}\right)
=\Rep\frac{\Gamma'}{\Gamma}\left(1+\frac{it}{2}\right)
\geq\frac{\Gamma'}{\Gamma}(1)=-c_E.
\]
Applying this to (\ref{eq:Hadamard2}), we obtain
\[
 \Rep\frac{L'}{L}(it,\chi)\leq-\frac{1}{2}\log\frac{q}{\pi}+\frac{1}{2}c_E.
\]
Next suppose $0<|s|\leq 1/1000$ and $\Rep(s)\leq 0$.
Then
\[
 \frac{\Gamma'}{\Gamma}\left(\frac{s}{2}\right)
=\frac{\Gamma'}{\Gamma}\left(1+\frac{s}{2}\right)-\frac{2}{s}
=-\frac{2}{s}-c_E+O(|s|).
\]
This together with $\Rep(1/s)\leq 0$ gives
\begin{equation}\label{eq:logderineq}
 \Rep\frac{L'}{L}(s,\chi)\leq -\frac{1}{2}\log\frac{q}{\pi}+\frac{1}{2}c_E
+O(|s|)
\end{equation}
for $s\in\C$ satisfying $0<|s|\leq 1/1000$ and $\Rep(s)\leq 0$.
Here we note that $-\frac{1}{2}\log(q/\pi)+c_E/2$ is negative
if $q>\pi e^{c_E}=5.59\ldots$.
Suppose $q\geq 7$.
Then there exists $\delta_0>0$ such that the right-hand side
of (\ref{eq:logderineq}) is negative on $\{s\in\C:0<|s|\leq\delta_0,~
 \Rep(s)\leq 0\}$.
We take any $\delta\in(0,\delta_0]$.
We take the contour $\mathcal{C}$ determined by the rectangle
with vertices at $-1\pm 1000i$, $\pm 1000i$ with a small
left-semicircular indentation $\delta e^{i\phi}$
($\phi:(3\pi)/2\to \pi/2$).
Then we see from the above discussion in addition to
Propositions \ref{Prop:negative1} and \ref{Prop:negative2}
that $\Rep(L'/L)(s,\chi)<0$ on $s\in\mathcal{C}$.
Since $(L'/L)(s,\chi)$ has no poles inside $\mathcal{C}$,
the argument principle says that $L'(s,\chi)$ has no zeros
inside $\mathcal{C}$.
Since $\delta\in(0,\delta_0]$ is arbitrary,
$L'(s,\chi)$ has no zeros on $\{s\in\C:-1\leq\Rep(s)\leq 0,s\neq 0\}$.
Combining the fact that $s=0$ is a simple zero of $L(s,\chi)$,
we obtain the first claim of Theorem \ref{Thm4}.

Next we consider the case $\chi(-1)=-1$.
We take the contour $\mathcal{C}$ determined by the rectangle
with vertices at $-2\pm 1000i$, $\pm 1000i$.
Then we have already shown that $\Rep(L'/L)(s,\chi)<0$ holds
on $s\in\mathcal{C}\setminus[-1000i,1000i]$.
Let $t\in[-1000,1000]$.
Then in the same manner as the case $\chi(-1)=1$,
(\ref{eq:Hadamard2}) gives
\begin{equation}\label{eq17}
 \Rep\frac{L'}{L}(it,\chi)\leq
-\frac{1}{2}\frac{\Gamma'}{\Gamma}\left(\frac{1}{2}\right)
-\frac{1}{2}\log\frac{q}{\pi}.
\end{equation}
Since $(\Gamma'/\Gamma)(1/2)=-2\log 2-c_E$,
(\ref{eq17}) is negative provided
$q>4\pi e^{c_E}=22.38\ldots$.
Thus $\Rep(L'/L)(s,\chi)<0$ holds on $s\in\mathcal{C}$
if $q\geq 23$.
Applying the argument principle and taking the trivial zero
$s=-1$ of $L(s,\chi)$ into account, we see that $L'(s,\chi)$ has
a unique zero inside $\mathcal{C}$.
This completes the proof.
\end{proof}
Finally in this section we briefly mention the case when $\chi$ is quadratic.
In this case we have the following assertions, which give more detailed
information than Theorems \ref{Thm2} and \ref{Thm4}:
\begin{Proposition}\label{Prop:quadratic1}
 Let $\chi$ be a primitive quadratic Dirichlet character modulo $q>1$.
Then for each $j\in\Z_{\geq 1}$
the zero of $L'(s,\chi)$ in $-2j-\kappa-1<\Rep(s)<-2j-\kappa+1$
lies in the interval $(-2j-\kappa,-2j-\kappa+1)$.
\end{Proposition}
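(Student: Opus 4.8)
The plan is to exhibit a real zero of $L'(s,\chi)$ in the open interval $(-2j-\kappa,\,-2j-\kappa+1)$ and then appeal to the uniqueness statement in Theorem~\ref{Thm2}. Since $\chi$ is quadratic we have $\overline{\chi}=\chi$ and the Dirichlet coefficients $\chi(n)$ are real, so $L(s,\chi)$, and hence $L'(s,\chi)$ and $(L'/L)(s,\chi)$, take real values on $\R$ wherever defined. Fix $j\in\Z_{\geq 1}$ and put $s_0=-2j-\kappa$. By Theorem~\ref{Thm2} there is exactly one zero $\alpha_j(\chi)$ of $L'(s,\chi)$ in the strip $\{-2j-\kappa-1<\Rep(s)<-2j-\kappa+1\}$, and the interval $(s_0,s_0+1)$ lies inside this strip; hence it is enough to find a zero of $L'(s,\chi)$ on $(s_0,s_0+1)$.

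First I would note that $L(s,\chi)$ does not vanish on $(s_0,s_0+1]$: in $\Rep(s)\leq 0$ the zeros of $L(s,\chi)$ are exactly the trivial ones $s=-2n-\kappa$ with $n\in\Z_{\geq 0}$, and none of these can lie in $(s_0,s_0+1]$ since $s_0+1$ is not of this form and the neighbouring trivial zeros are $s_0$ and $s_0+2$; moreover $s_0+1\leq -1<0$ for $j\geq 1$, so no nontrivial zero interferes. Thus $(L'/L)(s,\chi)$ is real-valued and continuous on $(s_0,s_0+1]$. As $s\downarrow s_0$, the simplicity of the trivial zero $s_0$ of $L(s,\chi)$ gives the Laurent expansion $(L'/L)(s,\chi)=\frac{1}{s-s_0}+O(1)$, so $(L'/L)(s,\chi)\to+\infty$. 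At the right endpoint, Proposition~\ref{Prop:negative2} applied at the real point $s=s_0+1$ (where $(L'/L)(s,\chi)$ equals its real part) gives $(L'/L)(s_0+1,\chi)\leq -10^{-4}<0$. By the intermediate value theorem there is $s^{\ast}\in(s_0,s_0+1)$ with $(L'/L)(s^{\ast},\chi)=0$, and since $L(s^{\ast},\chi)\neq 0$ this forces $L'(s^{\ast},\chi)=0$.

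Finally, $s^{\ast}$ is a zero of $L'(s,\chi)$ lying in the strip $\{-2j-\kappa-1<\Rep(s)<-2j-\kappa+1\}$, which by Theorem~\ref{Thm2} contains only one such zero; therefore $\alpha_j(\chi)=s^{\ast}\in(-2j-\kappa,\,-2j-\kappa+1)$, as claimed. The only point that needs a small amount of care is the behaviour of $(L'/L)(s,\chi)$ near $s_0$, but this is immediate from the trivial zero being simple (a fact already used in the proof of Theorem~\ref{Thm4}); I do not expect any real obstacle, as the argument reduces to a one-dimensional intermediate value computation anchored by the global uniqueness already in hand.
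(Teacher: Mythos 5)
Your argument is correct and is essentially the paper's own proof: both establish that $(L'/L)(s,\chi)$ is real on the real axis for quadratic $\chi$, tends to $+\infty$ as $s\downarrow -2j-\kappa$ (the paper cites the functional equation, you the simplicity of the trivial zero -- the same fact), is negative at $-2j-\kappa+1$ by Proposition~\ref{Prop:negative2}, and then apply the intermediate value theorem together with the uniqueness from Theorem~\ref{Thm2}.
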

\begin{Proposition}\label{Prop:quadratic2}
 Let $\chi$ be an odd primitive quadratic Dirichlet character modulo
$q\geq 23$.
Then the zero of $L'(s,\chi)$ on $-2\leq \Rep(s)\leq 0$
lies in the interval $(-1,0)$.
\end{Proposition}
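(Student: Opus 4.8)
The plan is to combine the uniqueness assertion of Theorem~\ref{Thm4}(2) with the reflection symmetry forced by $\chi$ being quadratic. For a real character one has $\overline{L(s,\chi)}=L(\bar s,\chi)$, hence $\overline{L'(s,\chi)}=L'(\bar s,\chi)$, so the zeros of $L'(s,\chi)$ are symmetric about the real axis. Since the strip $\{-2\leq\Rep(s)\leq 0\}$ is invariant under $s\mapsto\bar s$ and, by Theorem~\ref{Thm4}(2), contains exactly one zero of $L'(s,\chi)$ when $q\geq 23$ is odd, that zero must coincide with its complex conjugate; i.e.\ it is a real number $\beta'\in[-2,0]$. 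It then suffices to show that $L'(s,\chi)$ does not vanish on the real segment $-2\leq s\leq -1$ nor at $s=0$, for this places $\beta'$ in $(-1,0)$.

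First I would dispose of $s=-1$, $s=-2$ and $s=0$. Since $\kappa=1$, the point $s=-1$ is a simple trivial zero of $L(s,\chi)$, so $L'(-1,\chi)\neq 0$. The point $s=-2$ lies on the line $\Rep(s)=-2j-\kappa+1$ with $j=1$, so $L'(-2,\chi)\neq 0$ by Theorem~\ref{Thm2}(2). Finally $L'(0,\chi)\neq 0$ is already contained in the proof of Theorem~\ref{Thm4}(2), where $\Rep(L'/L)(it,\chi)<0$ is established on the whole line $\Rep(s)=0$ for odd $\chi$ with $q\geq 23$.

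The main step is to prove $\Rep(L'/L)(x,\chi)<0$ for every real $x\in(-2,-1)$. Here I would substitute $s=x$ into (\ref{eq:fteq}), the logarithmic derivative of the functional equation, with $\kappa=1$ and $\overline{\chi}=\chi$. Writing $\sigma=1-x\in(2,3)$: the contribution of $-(L'/L)(\sigma,\chi)$ is at most $\sum_{p}\frac{\log p}{p^{2}-1}<0.62$ in absolute value, bounded exactly as in the proof of Proposition~\ref{Prop:negative2} via (\ref{eq:Error1}); the term $-(\Gamma'/\Gamma)(\sigma)$ is at most $-(\Gamma'/\Gamma)(2)=-(1-c_E)$ because $\Gamma'/\Gamma$ is increasing on $(0,\infty)$; the term $-\log(q/2\pi)$ is at most $-\log(23/2\pi)$ since $q\geq 23$; and the term $\frac{\pi}{2}\cot(\pi(x+1)/2)$ is negative, because $\pi(x+1)/2\in(-\pi/2,0)$. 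Adding these four contributions gives a quantity that is negative for all $q\geq 23$, hence $L'(x,\chi)\neq 0$ on $(-2,-1)$. Combining with the previous paragraph, $\beta'\in(-1,0)$.

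I expect the only place needing a little care to be the endpoint $x=-1$: there $(L'/L)(x,\chi)$ and the cotangent term in (\ref{eq:fteq}) both have poles, so the inequality above cannot be run at $x=-1$ itself. That is exactly why I would handle $x=-1$ separately through the simple trivial zero of $L(s,\chi)$ and restrict the functional-equation estimate to the open interval $(-2,-1)$, where the cotangent contribution is harmless---indeed it only improves the bound, being negative throughout. Everything else is a short numerical check of the same flavour as the estimates in the proof of Proposition~\ref{Prop:negative2}.
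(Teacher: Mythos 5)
Your argument is correct, but it runs in the opposite direction from the paper's. You argue by elimination: conjugation symmetry $\overline{L'(\overline{s},\chi)}=L'(s,\chi)$ plus the uniqueness in Theorem~\ref{Thm4}(2) forces the zero to be real, and you then rule out $s=0$, $s=-1$, $s=-2$ and the open segment $(-2,-1)$, the last via a fresh estimate of $\Rep(L'/L)(x,\chi)$ from the functional equation (\ref{eq:fteq}) (your numerics check out: $0.62-(1-c_E)-\log(23/2\pi)<-1$, and the cotangent term is indeed negative for $x\in(-2,-1)$). The paper instead argues by exhibiting the zero: since $\chi$ is real, $(L'/L)(s,\chi)$ is real on the real axis away from zeros of $L$; the functional equation gives $\lim_{s\downarrow-1}(L'/L)(s,\chi)=+\infty$ (equation (\ref{eq:plusinfty}) with $j=0$, $\kappa=1$), while (\ref{eq17}) gives $(L'/L)(0,\chi)<0$ for $q\geq23$, so the intermediate value theorem produces a real zero of $L'$ in $(-1,0)$, which by Theorem~\ref{Thm4}(2) must be \emph{the} zero. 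The paper's route is shorter and needs no new numerical work, reusing (\ref{eq17}) and the pole of the cotangent; yours costs one additional (easy) estimate on $(-2,-1)$ but makes the realness of the zero explicit at the outset via symmetry rather than extracting it a posteriori from the IVT construction. Both proofs lean on the same two pillars --- the uniqueness statement of Theorem~\ref{Thm4}(2) and the reality of $L'/L$ on $\R$ for quadratic $\chi$ --- so the difference is one of mechanism, not of substance.
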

\begin{proof}[Proof of Propositions \ref{Prop:quadratic1} and \ref{Prop:quadratic2}]
Let $\chi$ be a primitive quadratic Dirichlet character modulo $q>1$.
Then $(L'/L)(s,\chi)$ is real
if $s\in\{s\in\R:L(s,\chi)\neq 0\}$.
By the functional equation (\ref{eq:fteq}) we have
\begin{equation}\label{eq:plusinfty}
 \lim_{s\downarrow -2j-\kappa}\frac{L'}{L}(s,\chi)=+\infty
\end{equation}
for each $j\in\Z_{\geq 0}$.
Combining this with Proposition \ref{Prop:negative2},
we obtain Proposition \ref{Prop:quadratic1} by the intermediate value theorem.
In the same manner,
(\ref{eq17}) and (\ref{eq:plusinfty}) give Proposition \ref{Prop:quadratic2}.
\end{proof}
\section{Zeros of $L'(s,\chi)$ in $\Rep(s)>0$}
In this section we show Theorems \ref{Thm5} and \ref{Thm6}.

For short we write the functional equation for $L(s,\chi)$
as $L(s,\chi)=F(s,\chi)L(1-s,\overline{\chi})$,
where
\[
 F(s,\chi)=\varepsilon(\chi)2^s\pi^{s-1}q^{\frac{1}{2}-s}
\sin\left(\frac{\pi(s+\kappa)}{2}\right)\Gamma(1-s).
\]
Here $\varepsilon(\chi)$ is a constant depending on $\chi$,
which satisfies $|\varepsilon(\chi)|=1$.
We also define $G(s,\chi)$ by
\[
 G(s,\chi)=-\frac{m^s}{\chi(m)\log m}L'(s,\chi).
\]
First of all we show
\begin{Lemma}\label{Lem:Gest}
 For $s=\sigma+it$ with $\sigma\geq 2$ and $t\in\R$ we have
\[
 |G(s,\chi)-1|
\leq 2\left(1+\frac{8m}{\sigma}\right)\exp\left(-\frac{\sigma}{2m}\right).
\]
\end{Lemma}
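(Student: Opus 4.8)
The plan is to work from the Dirichlet series of $L'(s,\chi)$ and isolate its first nonvanishing term. For $\sigma=\Rep(s)\geq 2>1$ the series $L(s,\chi)=\sum_{n\geq 1}\chi(n)n^{-s}$ converges absolutely, so $L'(s,\chi)=-\sum_{n\geq 2}\chi(n)(\log n)n^{-s}$. By the definition of $m$ one has $\chi(n)=0$ for $2\leq n<m$, hence the $n=m$ term is the leading one, and from $G(s,\chi)=-\frac{m^{s}}{\chi(m)\log m}L'(s,\chi)$ we obtain
\[
G(s,\chi)-1=\frac{m^{s}}{\chi(m)\log m}\sum_{n\geq m+1}\chi(n)(\log n)n^{-s}.
\]
Since $|\chi(m)|=1$ and $|\chi(n)|\leq 1$, taking absolute values reduces the lemma to the real-variable inequality
\[
\frac{1}{\log m}\sum_{n\geq m+1}(\log n)\Bigl(\tfrac{m}{n}\Bigr)^{\sigma}\leq 2\Bigl(1+\tfrac{8m}{\sigma}\Bigr)e^{-\sigma/(2m)},\qquad\sigma\geq 2,
\]
in which $q$ enters only through $m$ and $t$ has disappeared; recall $m\geq 2$.

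The decisive point is that one must extract the factor $e^{-\sigma/(2m)}$ \emph{before} estimating the tail: bounding $\sum_{n\geq m+1}(\log n)(m/n)^{\sigma}$ directly by $\int_{m}^{\infty}(\log u)(m/u)^{\sigma}\,du$ collapses to something of size $\asymp m/\sigma$ with no exponential saving, because the integrand near $u=m$ is of size $\log m$. Instead I will write $(m/n)^{\sigma}=(m/(m+1))^{\sigma}\cdot((m+1)/n)^{\sigma}$ for each $n\geq m+1$, pull out $(m/(m+1))^{\sigma}$, and split off the $n=m+1$ term. Since $\sigma\geq 2$ and $m+1\geq 3$, the function $u\mapsto(\log u)u^{-\sigma}$ is decreasing on $[m+1,\infty)$, so the remaining sum $\sum_{n\geq m+2}(\log n)((m+1)/n)^{\sigma}$ is at most $\int_{m+1}^{\infty}(\log u)((m+1)/u)^{\sigma}\,du$, which the substitution $u=(m+1)v$ evaluates exactly to $(m+1)\bigl(\tfrac{\log(m+1)}{\sigma-1}+\tfrac{1}{(\sigma-1)^{2}}\bigr)$. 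Collecting terms yields
\[
|G(s,\chi)-1|\leq\Bigl(\tfrac{m}{m+1}\Bigr)^{\sigma}\Bigl(\tfrac{\log(m+1)}{\log m}+\tfrac{(m+1)\log(m+1)}{(\sigma-1)\log m}+\tfrac{m+1}{(\sigma-1)^{2}\log m}\Bigr).
\]

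What then remains is only elementary bookkeeping. For the exponential factor, $1-x\leq e^{-x}$ gives $(m/(m+1))^{\sigma}\leq e^{-\sigma/(m+1)}\leq e^{-\sigma/(2m)}$ since $m+1\leq 2m$. For the three terms in the bracket I will use $m+1\leq 2m$, $\log(m+1)\leq 2\log m$ (equivalently $m+1\leq m^{2}$, valid for $m\geq 2$), $\log m\geq\log 2$, and $\tfrac{1}{\sigma-1}\leq\tfrac{2}{\sigma}$ as well as $\tfrac{1}{(\sigma-1)^{2}}\leq\tfrac{2}{\sigma}$ for $\sigma\geq 2$; these bound the bracket by $2+\tfrac{8m}{\sigma}+\tfrac{4m}{\sigma\log 2}$, which is at most $2+\tfrac{16m}{\sigma}=2(1+\tfrac{8m}{\sigma})$, completing the proof. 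The constants $2$ and $8$ are chosen for roundness and can be reduced with more care. Thus the only genuine obstacle is recognizing that one must peel off $(m/(m+1))^{\sigma}$ at the outset; everything after that is routine, if slightly fiddly, tracking of constants.
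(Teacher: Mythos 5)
Your proof is correct and takes essentially the same route as the paper's: both isolate the $n=m$ term via the Dirichlet series, split off $n=m+1$ and bound the tail $\sum_{n\geq m+2}$ by $\int_{m+1}^{\infty}(\log u)u^{-\sigma}\,du$, and convert $(m/(m+1))^{\sigma}$ into $\exp(-\sigma/(2m))$ (the paper via $\log(1+x)\geq x/2$, you via $1-x\leq e^{-x}$, which is equivalent here). The constant bookkeeping differs only cosmetically.
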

\begin{proof}
 By the Dirichlet series expression for $L(s,\chi)$
we find
\[
 G(s,\chi)=1+\frac{m^s}{\chi(m) \log m}
\sum_{n=m+1}^{\infty}\frac{\chi(n)\log n}{n^s}.
\]
Thus we have
\begin{equation}\label{eq:Gest1}
 |G(s,\chi)-1|\leq\frac{m^{\sigma}}{\log m}
\sum_{n=m+1}^{\infty}\frac{\log n}{n^{\sigma}}.
\end{equation}
We divide the sum into $n=m+1$ and $n\geq m+2$.
The sum over $n\geq m+2$ is estimated as follows:
\begin{align*}
 \sum_{n=m+2}^{\infty}\frac{\log n}{n^{\sigma}}
&\leq\int_{m+1}^{\infty}\frac{\log u}{u^{\sigma}}du\\
&=\frac{(m+1)^{1-\sigma}\log(m+1)}{\sigma-1}
+\frac{(m+1)^{1-\sigma}}{(\sigma-1)^2}\\
&\leq\frac{2(m+1)^{1-\sigma}\log(m+1)}{\sigma-1}.
\end{align*}
Inserting this into (\ref{eq:Gest1}), we have
\begin{equation}\label{eq:Gest2}
\begin{aligned}
 |G(s,\chi)-1|
&\leq\frac{\log(m+1)}{\log m}\left(\frac{m}{m+1}\right)^{\sigma}
\left(1+2\frac{m+1}{\sigma-1}\right)\\
&\leq2\left(1+\frac{8m}{\sigma}\right)\left(\frac{m}{m+1}\right)^{\sigma}.
\end{aligned}
\end{equation}
Since $\log(1+x)\geq x/2$ on $x\in[0,1]$,
we find
\[
 \left(\frac{m}{m+1}\right)^{\sigma}
=\exp\left(-\sigma\log\left(1+\frac{1}{m}\right)\right)
\leq\exp\left(-\frac{\sigma}{2m}\right).
\]
Applying this to (\ref{eq:Gest2}), we obtain the result.
 \end{proof}
By Lemma \ref{Lem:Gest} we have
\begin{equation}\label{eq:Gest3}
 |G(s,\chi)-1|\leq 4\exp\left(-\frac{\sigma}{2m}\right)
\end{equation}
for $\sigma\geq 8m$.
In particular, the function $G(s,\chi)$ has no zeros on $\sigma\geq 8m$.

Let $b_{\kappa}\in\{1+\kappa,3+\kappa\}$, $T\geq 2$
and $U\geq 10m$.
We apply the Littlewood lemma (see \cite[\S3.8]{Ti}) to $G(s,\chi)$
on the rectangle with vertices at $-b_{\kappa}\pm iT$
and $U\pm iT$.
Taking the imaginary part, we have
\begin{equation}\label{eq:Littlewood}
\begin{aligned}
&2\pi
\sum_{\begin{subarray}{c}
       \rho'=\beta'+i\gamma'\\
       \beta'>-b_{\kappa}, -T\leq \gamma'\leq T
      \end{subarray}}
(\beta'+b_{\kappa})\\
&=\int_{-T}^T\log|G(-b_{\kappa}+it,\chi)|dt
-\int_{-T}^T\log|G(U+it,\chi)|dt\\
&+\int_{-b_{\kappa}}^U\arg G(\sigma+iT,\chi)d\sigma
-\int_{-b_{\kappa}}^U\arg G(\sigma-iT,\chi)d\sigma.
\end{aligned}
\end{equation}
Here we determine the branch of $\log G(s,\chi)$
such that it tends to $0$ as $\sigma\to\infty$
and it is holomorphic in
$\C\setminus\{\rho'+\lambda:L'(\rho',\chi)=0,~\lambda\leq 0\}$.
When there are zeros of $L'(s,\chi)$ on $\Imp(s)=\pm T$,
we determine
$\arg G(\sigma\pm iT,\chi)=\lim_{\varepsilon\downarrow 0}
\arg G(\sigma\pm i(T+\varepsilon),\chi)$.
Thanks to (\ref{eq:Gest3}), the second integral on (\ref{eq:Littlewood})
tends to $0$ as $U\to\infty$.
We also note that Theorems \ref{Thm1}, \ref{Thm2} and \ref{Thm4}
give
$\#\{\rho'=\beta'+i\gamma':
L'(\rho',\chi)=0,~-b_{\kappa}<\beta'\leq 0\}\ll 1$,
where the implied constant is absolute.
Combining these, we obtain
\begin{equation}\label{eq:Littlewood2}
 2\pi
\sum_{\begin{subarray}{c}
       \rho'=\beta'+i\gamma'\\
       \beta'>0, -T\leq \gamma'\leq T
      \end{subarray}}
(\beta'+b_{\kappa})
=I_1+I_2^{+}-I_2^{-}+O(1),
\end{equation}
where $I_1=I_1(b_{\kappa},\chi,T)$ and
$I_2^{\pm}=I_2^{\pm}(b_{\kappa},\chi,T)$
are given by
\begin{align*}
 I_1&=\int_{-T}^{T}\log|G(-b_{\kappa}+it,\chi)|dt,\\
 I_2^{\pm}&=\int_{-b_{\kappa}}^{\infty}\arg G(\sigma\pm iT,\chi)d\sigma.
\end{align*}

We deal with $I_1$.
By the definition of $G(s,\chi)$ we have
\begin{equation}\label{eq:I1-1}
 I_1=-2T(b_{\kappa}\log m+\log\log m)
+\int_{-T}^T\log|L'(-b_{\kappa}+it,\chi)|dt.
\end{equation}
We treat the last integral.
We note that $\overline{L'(\overline{s},\overline{\chi})}=L'(s,\chi)$
gives
\begin{equation}\label{eq:I1-1-5}
 \int_{-T}^0\log|L'(-b_{\kappa}+it,\chi)|dt
=\int_0^T\log|L'(-b_{\kappa}+it,\overline{\chi})|dt.
\end{equation}
So it suffices to consider the integral over $t\in[0,T]$.
We take an absolute constant $t_0\geq 2$ and we will
determine it afterward.
See the discussion around (\ref{eq:I1-7-3}) for a choice of $t_0$.
We suppose $T\geq t_0$.
We divide the interval $[0,T]$ into $[0,t_0]$
and $(t_0,T]$.
We treat the integral over $[0,t_0]$.
We have
\[
 \log|L'(-b_{\kappa}+it,\chi)|
=\log|L(-b_{\kappa}+it,\chi)|
+\log\left|
\frac{L'}{L}(-b_{\kappa}+it,\chi)
\right|.
\]
By the functional equation, the first term on the right
is $(\frac{1}{2}+b_{\kappa})\log q+O(1)$ uniformly on $t\in[0,t_0]$.
We see from the functional equation together with Proposition \ref{Prop:negative2}
 that the second term is $O(\log\log q)$ on $t\in[0,t_0]$.
In consequence we obtain
\begin{equation}\label{eq:I1-2}
 \int_0^{t_0}\log|L'(-b_{\kappa}+it,\chi)|dt\ll \log q.
\end{equation}
Next we deal with the integral over $(t_0,T]$.
By the functional equation we have
$L'(s,\chi)=F'(s,\chi)L(1-s,\overline{\chi})-F(s,\chi)L'(1-s,\overline{\chi})$,
so that
\begin{equation}\label{eq:I1-3}
 \begin{aligned}
&\int_{t_0}^T\log|L'(-b_{\kappa}+it,\chi)|dt\\
&=\int_{t_0}^T\log|F(-b_{\kappa}+it,\chi)|dt
+\int_{t_0}^T\log\left|\frac{F'}{F}(-b_{\kappa}+it,\chi)\right|dt\\
&+\int_{t_0}^T\log|L(1+b_{\kappa}-it,\overline{\chi})|dt\\
&+\int_{t_0}^T
\log\left|
1-\frac{1}{(F'/F)(-b_{\kappa}+it,\chi)}
\frac{L'}{L}(1+b_{\kappa}-it,\overline{\chi})
\right|
dt.
 \end{aligned}
\end{equation}
By Stirling's formula we have
\[
 \log|F(-b_{\kappa}+it,\chi)|
=\left(\frac{1}{2}+b_{\kappa}\right)\log\frac{qt}{2\pi}+
O\left(\frac{1}{t}\right).
\]
Consequently,
\begin{equation}\label{eq:I1-4}
 \int_{t_0}^T\log|F(-b_{\kappa}+it,\chi)|dt
=\left(\frac{1}{2}+b_{\kappa}\right)
\left(
T\log\frac{qT}{2\pi}-T
\right)
+O(\log(qT)).
\end{equation}
In a similar manner, Stirling's formula for $(\Gamma'/\Gamma)(z)$
gives
\[
 \frac{F'}{F}(-b_{\kappa}+it,\chi)=-\log\frac{qt}{2\pi}
+O\left(\frac{1}{t}\right).
\]
Thus we have
\begin{equation}\label{eq:I1-5}
 \int_{t_0}^T
\log\left|
\frac{F'}{F}(-b_{\kappa}+it,\chi)
\right|
dt
=\int_{t_0}^T\log\log\frac{qt}{2\pi}dt
+O\left(
\int_{t_0}^T\frac{dt}{t\log(qt)}
\right).
\end{equation}
Integrating by parts, we see that the first integral on the right
turns to
\begin{align*}
 \int_{t_0}^T\log\log\frac{qt}{2\pi}dt
&=T\log\log\frac{qT}{2\pi}-\frac{2\pi}{q}\li\left(\frac{qT}{2\pi}\right)
+O(\log\log q).
\end{align*}
We easily see that the last term on (\ref{eq:I1-5}) is $O(\log\log(qT))$.
Combining these, we obtain
\begin{equation}\label{eq:I1-6}
  \int_{t_0}^T
\log\left|
\frac{F'}{F}(-b_{\kappa}+it,\chi)
\right|
dt
=T\log\log\frac{qT}{2\pi}-\frac{2\pi}{q}\li\left(\frac{qT}{2\pi}\right)
+O(\log\log(qT)).
\end{equation}
We see from the Dirichlet series expression for $\log L(s,\overline{\chi})$
that
\begin{equation}\label{eq:I1-7}
 \int_{t_0}^T\log|L(1+b_{\kappa}-it,\overline{\chi})|dt\ll 1.
\end{equation}
Next we treat the last term on (\ref{eq:I1-3}).
For this purpose we determine $t_0$ and we estimate
the integrand.
By Stirling's formula,
$(F'/F)(s,\chi)=-\log(q|1-s|)+O(1)$ holds
for $\sigma\leq -1$ and $t\geq 2$,
where the implied constant is absolute.
So we can choose $t_0$, which does not depend on any parameters,
such that
\begin{equation}\label{eq:I1-7-3}
\left|\frac{F'}{F}(s,\chi)\right|\geq 10
\end{equation}
holds for $\sigma\leq -1$ and $t\geq t_0$.
On the other hand,
by the Dirichlet series expression for $(L'/L)(s,\chi)$ we have
\begin{align*}
 \left|\frac{L'}{L}(1-s,\overline{\chi})\right|
&\leq\sum_{n=2}^{\infty}\frac{\log n}{n^{1-\sigma}}
\leq\frac{\log 2}{2^{1-\sigma}}+\int_2^{\infty}\frac{\log u}{u^{1-\sigma}}du\\
&\leq\left(1+\frac{3}{2}\log 2\right)2^{\sigma}
\end{align*}
for $\sigma\leq -1$ and $t\geq t_0$.
Thus we find that
 \begin{equation}\label{eq:I1-7-5}
 \left|
\frac{1}{(F'/F)(s,\chi)}\frac{L'}{L}(1-s,\overline{\chi})
\right|
\leq 2^{\sigma}
\end{equation}
holds for $\sigma\leq -1$ and $t\geq t_0$.
Therefore we can determine the branch of
\begin{equation}\label{eq:I1-8}
 \log\left(
   1-\frac{1}{(F'/F)(s,\chi)}\frac{L'}{L}(1-s,\overline{\chi})
\right)
\end{equation}
such that it is holomorphic in a region including
$\{\sigma+it:\sigma\leq -1, t\geq t_0\}$
and it tends to $0$ as $\sigma\to -\infty$.
We apply Cauchy's theorem to (\ref{eq:I1-8}) on the triangle
joining $-b_{\kappa}+it_0$, $-b_{\kappa}+iT$ and $-T+iT$.
The inequality (\ref{eq:I1-7-5}) says that (\ref{eq:I1-8}) is $O(2^{\sigma})$
on the triangle.
This gives 
\begin{equation}\label{eq:I1-9}
 \int_{t_0}^T
\log\left|
1-\frac{1}{(F'/F)(-b_{\kappa}+it,\chi)}
\frac{L'}{L}(1+b_{\kappa}-it,\overline{\chi})
\right|
dt\ll 1.
\end{equation}
We insert (\ref{eq:I1-4}), (\ref{eq:I1-6}), (\ref{eq:I1-7})
and (\ref{eq:I1-9}) into (\ref{eq:I1-3}).
Combining this with (\ref{eq:I1-2}), we obtain
 \begin{align*}
 &\int_0^T\log|L'(-b_{\kappa}+it,\chi)|dt\\
&=\left(\frac{1}{2}+b_{\kappa}\right)\left(T\log\frac{qT}{2\pi}-T\right)
+T\log\log\frac{qT}{2\pi}-\frac{2\pi}{q}\li\left(\frac{qT}{2\pi}\right)
+O(\log(qT)).
 \end{align*}
Thanks to (\ref{eq:I1-1-5}), a similar formula holds for the
integral over $[-T,0]$.
Applying these to (\ref{eq:I1-1}), we conclude
\begin{equation}\label{eq:I1}
 \begin{aligned}
  I_1&=
2\left(\frac{1}{2}+b_{\kappa}\right)\left(T\log\frac{qT}{2\pi}-T\right)
-2T(b_{\kappa}\log m+\log\log m)\\
&+2T\log\log\frac{qT}{2\pi}-\frac{4\pi}{q}\li\left(\frac{qT}{2\pi}\right)
+O(\log(qT)).
 \end{aligned}
\end{equation}
This remains valid for $2\leq T<t_0$.
In fact, when $2\leq T<t_0$, we find $I_1=O(\log q)$ in the same
manner as (\ref{eq:I1-2}), which implies (\ref{eq:I1}).

Next we deal with $I_2^{\pm}$.
For this purpose we will give the following
bounds for $\arg G(\sigma\pm iT,\chi)$:
\begin{Proposition}\label{Prop:argG}
 For $T\geq 2$ we have
\begin{equation}\label{eq:argG}
 \arg G(\sigma\pm iT,\chi)\ll
\begin{cases}
 \exp(-\sigma/(2m))& \text{if $10m\leq \sigma$,}\\
 m/\sigma           & \text{if $3\leq\sigma\leq 10m$,}\\
 m^{1/2}\log(qT)   & \text{if $-5\leq\sigma\leq 3$},
\end{cases}
\end{equation}
where the implied constant is absolute.
\end{Proposition}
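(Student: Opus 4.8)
The plan is to estimate $\arg G(\sigma\pm iT,\chi)$ by following the winding of $G$ along the horizontal ray $\{u\pm iT:u\ge\sigma\}$, on which $\log G$ has been normalised to vanish at $u=+\infty$. As usual one reduces to counting sign changes of $\Rep G(u\pm iT,\chi)$: if $\Rep G(\sigma_1\pm iT,\chi)>0$ then
\[
 \bigl|\arg G(\sigma\pm iT,\chi)-\arg G(\sigma_1\pm iT,\chi)\bigr|\le\pi\bigl(1+n_\pm(\sigma,\sigma_1)\bigr),
\]
where $n_\pm(\sigma,\sigma_1)$ is the number of zeros in $[\sigma,\sigma_1]$ of $\Phi_\pm(u):=\Rep G(u\pm iT,\chi)$, which extends to the entire function $\Phi_\pm(z)=\tfrac12\bigl(G(z\pm iT,\chi)+G(z\mp iT,\overline{\chi})\bigr)$ (one uses $\overline{G(s,\chi)}=G(\overline{s},\overline{\chi})$). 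I would take $\sigma_1=\lambda m$ for a suitable absolute constant $\lambda$ (e.g.\ $\lambda=4$), where Lemma~\ref{Lem:Gest} already forces $|G(s,\chi)-1|<1$ and hence $\Rep G(\lambda m\pm iT,\chi)>0$, and split the problem at $\sigma=10m$ and $\sigma=3$.

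For $\sigma\ge 10m$ nothing is needed beyond (\ref{eq:Gest3}): there $|G(s,\chi)-1|\le 4e^{-5}<\tfrac12$, so $\log G$ is the principal branch and $|\arg G(\sigma\pm iT,\chi)|\le|G(s,\chi)-1|/(1-|G(s,\chi)-1|)\ll e^{-\sigma/(2m)}$. Running the same estimate with Lemma~\ref{Lem:Gest} in place of (\ref{eq:Gest3}) gives $|\arg G(\sigma\pm iT,\chi)|\ll 1$ on $\lambda m\le\sigma\le 10m$, which is $\ll m/\sigma$ there because $m/\sigma\ge\tfrac1{10}$.

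The substantive part of the middle range is $3\le\sigma<\lambda m$. Here I would write $\arg G(\sigma\pm iT,\chi)=\arg G(\lambda m\pm iT,\chi)+W_\pm$, where $W_\pm$ is the variation of $\arg G$ along $[\sigma,\lambda m]\times\{\pm T\}$. Differentiating the Dirichlet series for $G(s,\chi)-1$ and bounding the tail exactly as in the proof of Lemma~\ref{Lem:Gest} yields $|G'(s,\chi)|\ll m/\Rep(s)^2$ for $\Rep(s)\ge 3$; away from the (isolated) points where $|G(u\pm iT,\chi)|$ is small this controls $W_\pm$ by $\ll\int_\sigma^{\lambda m}m u^{-2}\,du\ll m/\sigma$, while each short excursion of $G$ near $0$ contributes only $O(1)$. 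It therefore remains to see that the number of such excursions is also $\ll m/\sigma$; for this one observes that $|G(u\pm iT,\chi)|^{2}$ extends to the entire function $G(z\pm iT,\chi)\,G(z\mp iT,\overline{\chi})$, whose real zeros correspond to zeros of $L'(\cdot,\chi)$ and $L'(\cdot,\overline{\chi})$, and — after choosing the ordinate suitably within $[T,T+1]$ — bounds their number via the same Dirichlet-series input (or a Jensen estimate). This counting is the technically heaviest step in the middle range.

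Finally, for $-5\le\sigma\le 3$ I would write $\arg G(\sigma\pm iT,\chi)=\arg G(3\pm iT,\chi)+O\bigl(1+n_\pm(\sigma,3)\bigr)$; the first term is $\ll m\ll\log(qT)$ by the previous case (recall $m\ll\log q$). To bound $n_\pm(\sigma,3)$ I would apply Jensen's inequality to $\Phi_\pm$ on an auxiliary disk, for which one needs size estimates for $G$: the functional equation $L'(s,\chi)=F'(s,\chi)L(1-s,\overline{\chi})-F(s,\chi)L'(1-s,\overline{\chi})$ together with Stirling's formula and convexity gives $\log|G(s,\chi)|\ll\log(qT)$ throughout the bounded strip $-5\le\Rep(s)\le 3$, whereas for $\Rep(s)\ge 2$ the normalisation built into $G$ only allows $|G(s,\chi)|\ll m$, i.e.\ $\log|G(s,\chi)|\ll\log m$. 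The delicate point — and the step I expect to be hardest — is to arrange this Jensen argument so that the region $2\le\Rep(s)\le 8m$, where $|G|$ can be as large as a fixed power of $m$, is absorbed into the disk in such a way that it enters only through its (small) angular measure on the bounding circle, rather than through the maximum modulus; together with a choice of ordinate in $[T,T+1]$ keeping $\Phi_\pm$ bounded away from $0$ at the centre, this is what yields the stated $m^{1/2}\log(qT)$ in place of the $m\log(qT)$ a careless choice of disk would give. Assembling the three ranges and passing from $\arg G(\sigma\pm i(T+\varepsilon),\chi)$ to the limiting value $\arg G(\sigma\pm iT,\chi)$ then gives the proposition.
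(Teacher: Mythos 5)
Your architecture is the paper's: reduce $\arg G(\sigma\pm iT,\chi)$ to counting sign changes of $\Rep G(u\pm iT,\chi)$ on a horizontal segment, realise that restriction as the real values of the entire function $H(z)=\tfrac12\bigl(G(z\pm iT,\chi)+G(z\mp iT,\overline{\chi})\bigr)$, and count its real zeros by Jensen's inequality, with the range $\sigma\ge 10m$ disposed of by (\ref{eq:Gest3}) and $m\ll\log q$ used to clean up. You have also correctly diagnosed that the whole game in $-5\le\sigma\le 3$ is to make the region where only the convexity bound $\log|G|\ll\log(qT)$ is available enter Jensen's inequality through a small angular measure rather than through its maximum modulus.

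But that is exactly the step you leave unexecuted, and it is the content of the proposition. The paper's construction: take the disk centred at $z=11m$ of radius $R+R_0$ with $R=11m-\sigma$, so that $[\sigma,10m]$ lies inside and the centre is a point where (\ref{eq:Gest3}) forces $|H(11m)|\asymp 1$ (so no adjustment of the ordinate within $[T,T+1]$ is needed --- which is just as well, since shifting $T$ by a bounded amount would change the quantity being estimated and require a separate argument to undo). With $R_0=5$, the arc of the bounding circle lying in $\Rep(z)<2$ is $|\theta-\pi|<\theta_0$ with $\cos\theta_0=(11m-2)/(R+5)=1+O(1/m)$, whence $\theta_0\ll m^{-1/2}$; on this arc one invokes Lemma~\ref{Lem:estimateG}, on the complementary arc Lemma~\ref{Lem:Gest} combined with the explicit evaluation of $\frac1{2\pi}\int\log|a+b\cos\theta|\,d\theta$ (Lemma~\ref{Lem:Jensen}) keeps the contribution $O(1)$, and the Jensen normalising factor $1/\log(1+R_0/R)\ll R/R_0\ll m$ then produces $m\cdot m^{-1/2}\log(qT)$. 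Note also that the dangerous region is $\Rep(z)<2$, not $2\le\Rep(z)\le 8m$ as you write: on the latter Lemma~\ref{Lem:Gest} gives $|G|\ll m$, that arc occupies a constant fraction of any admissible circle, and its contribution $\ll m\log m\ll m^{1/2}\log q$ is harmless with no angular-measure saving at all. Finally, your treatment of $3\le\sigma\le 10m$ by integrating $|G'|$ and counting excursions of $G$ near $0$ is both vaguer and unnecessary: the same Jensen disk with $R_0=\sigma-2$, which keeps the whole circle in $\Rep(z)\ge 2$, gives $n(R)\ll R/R_0\ll m/\sigma$ directly.
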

In order to show this, we collect consequences of well-known facts.
First of all we recall estimates for $G(s,\chi)$.
\begin{Lemma}\label{Lem:estimateG}
For $s=\sigma+it$ with $-10\leq\sigma\leq 3$ and $t\in\R$
we have
\[
 G(s,\chi)\ll (q\tau)^{20},
\]
where $\tau:=|t|+2$ and the implied constant is absolute.
\end{Lemma}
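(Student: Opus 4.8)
Since $G(s,\chi)=-\,m^{s}L'(s,\chi)/(\chi(m)\log m)$ and $m\ll\log q$, on the strip $-10\le\sigma\le 3$ the factor $m^{s}/(\chi(m)\log m)$ is $\ll m^{3}/\log m\ll q$, so it suffices to prove $L'(s,\chi)\ll (q\tau)^{A}$ there for some absolute $A\le 19$. For this I would bound $L$ itself on the slightly larger strip $-11\le\Rep(w)\le 4$ and then recover $L'$ by Cauchy's integral formula on the circle $|w-s|=1$, which costs nothing since $|\Imp w|+2$ stays within a bounded ratio of $\tau$ on that circle.

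To bound $L(w,\chi)$ on $-11\le\Rep(w)\le 4$ I would proceed by regions. For $\Rep(w)\ge 2$ the Dirichlet series converges absolutely and gives $L(w,\chi)\ll 1$. For $\Rep(w)\le 0$ the functional equation $L(w,\chi)=F(w,\chi)L(1-w,\overline{\chi})$ reduces matters to estimating $F$, since $\Rep(1-w)\ge 1$ forces $L(1-w,\overline{\chi})\ll\log(q\tau)$; here Stirling's formula for $\log\Gamma$ together with $|\sin(\pi(w+\kappa)/2)|\ll e^{\pi|\Imp w|/2}$ (which exactly cancels the factor $e^{-\pi|\Imp w|/2}$ in $|\Gamma(1-w)|$) yields $F(w,\chi)\ll q^{\,1/2-\Rep w}\tau^{\,1/2-\Rep w}\ll (q\tau)^{23/2}$ uniformly on $-11\le\Rep(w)\le 0$, the point being that $\Gamma(1-w)$ has no pole in that range. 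Finally, on the remaining vertical strip $0\le\Rep(w)\le 2$ I would use the crude convexity bound $L(w,\chi)\ll (q\tau)^{C}$, which follows from the Phragm\'en--Lindel\"of principle applied between the lines $\Rep(w)=-1$ and $\Rep(w)=2$ on which polynomial bounds have just been established (alternatively, from the Hurwitz-zeta representation $L(w,\chi)=q^{-w}\sum_{a=1}^{q}\chi(a)\zeta(w,a/q)$ and classical bounds for $\zeta(w,\alpha)$, for which see \cite{MV}). Putting the three regions together gives $L(w,\chi)\ll (q\tau)^{A}$ with $A$ absolute on $-11\le\Rep(w)\le 4$, hence the same bound for $L'(s,\chi)$ on $-10\le\sigma\le 3$, and multiplication by the $O(q)$ factor produces the asserted estimate for $G(s,\chi)$.

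I do not anticipate a real obstacle: all the tools used --- absolute convergence of the Dirichlet series, the functional equation, Stirling's formula, Phragm\'en--Lindel\"of (or Hurwitz-zeta estimates), and Cauchy's integral formula --- are standard and already in play in \S 2--\S 3. The only care needed is arithmetic bookkeeping, namely to verify that the exponent accumulated from $q^{1/2-\Rep w}$ and $\tau^{1/2-\Rep w}$ (at most $23/2$ on $\Rep(w)\ge -11$), from the convexity bound on the critical strip (a bounded absolute amount), from Cauchy's formula (nothing), and from the elementary factor $m^{s}$ (an extra $1$), together stays comfortably below the generous constant $20$ in the statement.
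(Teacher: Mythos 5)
Your argument is correct and follows essentially the same route as the paper: Cauchy's integral formula on the circle $|w-s|=1$ to pass from $L$ to $L'$, a polynomial bound $L(w,\chi)\ll(q\tau)^{A}$ on the enlarged strip $-11\le\Rep(w)\le 4$, and $m\ll\log q$ to absorb the factor $m^{s}/(\chi(m)\log m)$. The only difference is that the paper simply cites Montgomery--Vaughan (Corollary 10.10 and Lemma 10.15, giving $A=15$) for the bound on $L$, whereas you sketch its standard proof via the functional equation and Phragm\'en--Lindel\"of; your exponent bookkeeping is sound.
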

\begin{proof}
Cauchy's integral formula gives
\begin{equation}\label{eq:Cauchy}
 L'(s,\chi)=\frac{1}{2\pi i}
\int_{|w-s|=1}
\frac{L(w,\chi)}{(w-s)^2}dw.
\end{equation}
According to \cite[Corollary 10.10 and Lemma 10.15]{MV},
the inequality $L(s,\chi)\ll (q\tau)^{15}$ holds
for $-11\leq\sigma\leq 4$ and $t\in\R$.
Inserting this into (\ref{eq:Cauchy})
and using $m\ll\log q$,
we reach the result.
\end{proof}
Next we recall the following formula:
\begin{Lemma}\label{Lem:Jensen}
 For $a>0$ and $b>0$ we have
\[
 \frac{1}{2\pi}\int_0^{2\pi}\log|a+b\cos\theta|d\theta
=\begin{cases}
\log\frac{a+\sqrt{a^2-b^2}}{2} & \text{if $a>b$},\\
\log(b/2) & \text{if $a\leq b$}.
\end{cases}
\]
\end{Lemma}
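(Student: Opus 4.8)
The plan is to evaluate the integral by reducing it to the classical mean-value identity
\[
 \frac{1}{2\pi}\int_0^{2\pi}\log\bigl|e^{i\theta}-w\bigr|\,d\theta=\log\max(1,|w|),\qquad w\in\C ,
\]
which is the real one-variable form of Jensen's formula: for $|w|>1$ the function $z\mapsto\log|z-w|$ is harmonic on a neighbourhood of the closed unit disk, so its average over $|z|=1$ equals its value $\log|w|$ at $z=0$; for $|w|<1$ one writes $|e^{i\theta}-w|=|1-we^{-i\theta}|$ and applies the mean-value property to the harmonic function $z\mapsto\log|1-wz|$ to get $0$; and for $|w|=1$ a rotation reduces matters to $\frac{1}{2\pi}\int_0^{2\pi}\log\bigl(2|\sin(\theta/2)|\bigr)\,d\theta=0$, using $\int_0^{\pi}\log\sin u\,du=-\pi\log 2$.

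First I would substitute $z=e^{i\theta}$ and factor
\[
 a+b\cos\theta=a+\frac{b}{2}\Bigl(z+\frac1z\Bigr)=\frac{b}{2z}\Bigl(z^2+\frac{2a}{b}z+1\Bigr)=\frac{b}{2z}(z-z_1)(z-z_2),
\]
where $z_1,z_2$ are the roots of $X^2+\tfrac{2a}{b}X+1$, so that $z_1z_2=1$ and $z_1+z_2=-2a/b$. Taking $\log|\cdot|$, integrating over $\theta\in[0,2\pi]$, and using $|z|=1$ to kill the term $\frac{1}{2\pi}\int_0^{2\pi}\log|z|\,d\theta=0$, the displayed mean-value identity gives
\[
 \frac{1}{2\pi}\int_0^{2\pi}\log|a+b\cos\theta|\,d\theta=\log\frac{b}{2}+\log\max(1,|z_1|)+\log\max(1,|z_2|).
\]
Because $|z_1z_2|=1$, at most one of $|z_1|,|z_2|$ exceeds $1$: if exactly one does, the last two terms combine to $\log\max(|z_1|,|z_2|)$; if $|z_1|=|z_2|$, both equal $1$ and those terms vanish.

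It then remains to locate the roots. If $a>b$, the discriminant $4(a^2-b^2)/b^2$ is positive, so $z_{1,2}=-a/b\pm\sqrt{a^2/b^2-1}$ are real and negative with $\max(|z_1|,|z_2|)=\bigl(a+\sqrt{a^2-b^2}\bigr)/b>1$, whence the integral equals $\log\frac{b}{2}+\log\frac{a+\sqrt{a^2-b^2}}{b}=\log\frac{a+\sqrt{a^2-b^2}}{2}$. If $a\le b$, the discriminant is nonpositive, so $z_1,z_2$ are complex conjugate (or coincide), hence $|z_1|=|z_2|=\sqrt{|z_1z_2|}=1$ and the integral equals $\log(b/2)$. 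I do not expect a genuine obstacle here: the only analytic inputs are the mean-value property of harmonic functions and the elementary value $\int_0^{\pi}\log\sin u\,du=-\pi\log 2$, and the sole point needing a little care is the case $a\le b$, where $\log|a+b\cos\theta|$ has integrable logarithmic singularities at $\cos\theta=-a/b$ — but this is exactly the $|w|=1$ instance of the mean-value identity, so it is already covered. (An alternative for $a>b$ is to differentiate in $a$, use $\frac{1}{2\pi}\int_0^{2\pi}(a+b\cos\theta)^{-1}\,d\theta=(a^2-b^2)^{-1/2}$, fix the constant via the $a\to\infty$ asymptotics, and pass to $a\le b$ by a limiting argument.)
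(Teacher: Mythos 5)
Your proposal is correct and follows essentially the same route as the paper: both factor $a+b\cos\theta=\frac{b}{2z}(z-z_1)(z-z_2)$ with $z=e^{i\theta}$ and $z_1z_2=1$, apply Jensen's formula (your mean-value identity $\frac{1}{2\pi}\int_0^{2\pi}\log|e^{i\theta}-w|\,d\theta=\log\max(1,|w|)$ is exactly the ingredient the paper cites from Titchmarsh) to obtain $\log(b/2)+\log^{+}|z_1|+\log^{+}|z_2|$, and then locate the roots according to whether $a>b$ or $a\le b$. The only difference is that you also sketch a proof of the mean-value identity itself, including the boundary case $|w|=1$, which the paper takes for granted.
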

\begin{proof}
We calculate the left-hand side as
\begin{equation}\label{eq:logcos}
\begin{aligned}
&\frac{1}{2\pi}\int_0^{2\pi}\log|a+b\cos\theta|d\theta\\
&=\frac{1}{2\pi}\int_0^{2\pi}
\log\left|a+b\frac{e^{i\theta}+e^{-i\theta}}{2}\right|d\theta\\
&=\log\left(\frac{b}{2}\right)
+\frac{1}{2\pi}\int_0^{2\pi}
\log\left|e^{2i\theta}+\frac{2a}{b}e^{i\theta}+1\right|d\theta.
\end{aligned}
\end{equation}
We put $\alpha_{\pm}=-\frac{a}{b}\pm\sqrt{(\frac{a}{b})^2-1}$,
which are solutions of $X^2+\frac{2a}{b}X+1=0$.
By Jensen's theorem (see \cite[\S 3.61]{Ti}), (\ref{eq:logcos})
turns to
\[
=\log\left(\frac{b}{2}\right)+\log^{+}|\alpha_{+}|+\log^{+}|\alpha_{-}|,
\]
where $\log^{+}x=\max\{\log x,0\}$.
We can easily check that $|\alpha_{+}|<1$ and $|\alpha_{-}|>1$ when $a>b$
and that $|\alpha_{\pm}|=1$ when $a\leq b$.
This completes the proof.
\end{proof}
Now we are ready to prove Proposition \ref{Prop:argG}.
In the proof below $c_1,c_2,\ldots$ are positive constants independent
of any parameters.
\begin{proof}[Proof of Proposition \ref{Prop:argG}]
We see from $\overline{G(\overline{s},\overline{\chi})}=G(s,\chi)$
that $\arg G(\sigma-iT,\chi)=-\arg G(\sigma+iT,\overline{\chi})$.
Thus it suffices to show (\ref{eq:argG}) for $\arg G(\sigma+iT,\chi)$
only.
We concentrate on $\arg G(\sigma+iT,\chi)$ below.
When $\sigma\geq 10m$, (\ref{eq:argG}) is an immediate consequence
of Lemma \ref{Lem:Gest} or (\ref{eq:Gest3}).

Let $\sigma\in[-10,10m]$.
We put
$h:=\#\{x\in[\sigma,10m]:\Rep G(x+iT,\chi)=0\}$.
Then we see that $|\arg G(\sigma+iT,\chi)|\leq(h+\frac{3}{2})\pi$.
In order to estimate $h$, we put
\[
 H(z,\chi):=\frac{1}{2}(G(z+iT,\chi)+G(z-iT,\overline{\chi})).
\]
For $r>0$ we denote by $n(r)$ the number of zeros of $H(z,\chi)$
on $|z-11m|\leq r$, counted with multiplicity.
Since $H(z,\chi)=\Rep G(z+iT,\chi)$ for $z\in\R$,
we see that $h\leq n(R)$, where
\[
 R:=11m-\sigma.
\]
We see from the above discussion that
\begin{equation}\label{eq:argG2}
 \arg G(\sigma+iT,\chi)\ll n(R).
\end{equation}

Below we estimate $n(R)$. We take $R_0>0$.
Then by Jensen's theorem we have
\[
 \int_0^{R+R_0}\frac{n(r)}{r}dr
=\frac{1}{2\pi}\int_0^{2\pi}\log|H(11m+(R+R_0)e^{i\theta},\chi)|d\theta
-\log|H(11m,\chi)|.
\]
Since $n(r)$ is nonnegative and monotonically increasing,
the left-hand side is bounded below as
\[
 \int_0^{R+R_0}\frac{n(r)}{r}dr\geq\int_{R}^{R+R_0}\frac{n(r)}{r}dr
\geq n(R)\log\left(1+\frac{R_0}{R}\right).
\]
Combining this with $\log|H(11m,\chi)|=O(1)$,
which follows from (\ref{eq:Gest3}),
we have
\begin{equation}\label{eq:estnR}
 n(R)\leq
\frac{1}{\log(1+\frac{R_0}{R})}
\left(
\frac{1}{2\pi}\int_0^{2\pi}\log|H(11m+(R+R_0)e^{i\theta},\chi)|d\theta
+c_1
\right).
\end{equation}

First of all we consider the case $3\leq\sigma\leq 10m$.
In this case we restrict $R_0$ by
\begin{equation}\label{eq:restriction}
 0<R_0\leq \sigma-2.
\end{equation}
Then we note $11m-(R+R_0)\geq 2$.
We see from Lemma \ref{Lem:Gest} that
\begin{align*}
&\frac{1}{2\pi}\int_0^{2\pi}\log|H(11m+(R+R_0)e^{i\theta},\chi)|d\theta\\
&\leq
\frac{1}{2\pi}\int_0^{2\pi}\log\frac{m}{11m+(R+R_0)\cos\theta}d\theta+c_2.
\end{align*}
By Lemma \ref{Lem:Jensen}, this is
\[
 \leq\log m-\log\left(\frac{11m}{2}\right)+c_2\leq c_3.
\]
We also note that the restriction (\ref{eq:restriction}) implies
$0<R_0/R\leq c_4$, so that $\log(1+\frac{R_0}{R})\gg R_0/R$.
Combining these, we obtain
\[
 n(R)\ll\frac{R}{R_0}.
\]
Taking $R_0=\sigma-2$, we obtain $n(R)\ll m/\sigma$.
This together with (\ref{eq:argG2}) completes the proof
when $3\leq\sigma\leq 10m$.

Finally we deal with the case $-5\leq\sigma\leq 3$.
In this case we choose $R_0=5$.
In order to estimate the integral on (\ref{eq:estnR}),
we divide $[0,2\pi]=\mathcal{I}_1\cup \mathcal{I}_2$,
where
\begin{align*}
 \mathcal{I}_1&:=\{\theta\in[0,2\pi]:11m+(R+5)\cos\theta\geq 2\},\\
 \mathcal{I}_2&:=\{\theta\in[0,2\pi]:11m+(R+5)\cos\theta<2\}.
\end{align*}
We take $\theta_0\in(0,\pi/2)$ such that
\[
 \cos\theta_0=\frac{11m-2}{R+5}.
\]
Then we have $\mathcal{I}_1=[0,\pi-\theta_0]\cup[\pi+\theta_0,2\pi]$
and $\mathcal{I}_2=(\pi-\theta_0,\pi+\theta_0)$.
Since $\cos\theta_0=1+O(1/m)$ and $\cos\theta_0=1-2\sin^2(\theta_0/2)$,
we see that
\begin{equation}\label{eq:esttheta}
 \theta_0=O(m^{-1/2}).
\end{equation}
We deal with the integral over $\mathcal{I}_1$.
By Lemma \ref{Lem:Gest} we have
\begin{equation}\label{eq:estintegral}
\begin{aligned}
&\frac{1}{2\pi}\int_{\mathcal{I}_1}\log|H(11m+(R+5)e^{i\theta},\chi)|d\theta\\
&\leq\log m-\frac{1}{2\pi}\int_{\mathcal{I}_1}\log|11m+(R+5)\cos\theta|
d\theta+c_5.
\end{aligned}
\end{equation}
We see from Lemma \ref{Lem:Jensen} together with $R+5\geq 11m$
that
\begin{align*}
&\frac{1}{2\pi}\int_{\mathcal{I}_1}
\log|11m+(R+5)\cos\theta|d\theta\\
&=\log\frac{R+5}{2}
-\frac{1}{2\pi}\int_{\pi-\theta_0}^{\pi+\theta_0}
\log|11m+(R+5)\cos\theta|d\theta\\
&\geq\log m-\frac{\theta_0}{\pi}\log(30m).
\end{align*}
Inserting this into (\ref{eq:estintegral}) and using (\ref{eq:esttheta}),
we obtain
\[
 \frac{1}{2\pi}\int_{\mathcal{I}_1}\log|H(11m+(R+5)e^{i\theta},\chi)|d\theta
\leq c_6.
\]
Next we treat the integral over $\mathcal{I}_2$.
By Lemma \ref{Lem:estimateG},
\[
 H(11m+(R+5)e^{i\theta},\chi)\ll (qT')^{20}
\]
holds on $\theta\in\mathcal{I}_2$, where $T':=\max\{T,m\}$.
This together with (\ref{eq:esttheta}) yields
\[
 \frac{1}{2\pi}\int_{\mathcal{I}_2}\log|H(11m+(R+5)e^{i\theta},\chi)|d\theta
\leq c_7 m^{-1/2}\log(qT').
\]
Inserting this and $\log(1+\frac{5}{R})\gg 1/R\gg 1/m$ into (\ref{eq:estnR}),
we obtain
\begin{equation}\label{eq:estnR2}
n(R)\ll m(m^{-1/2}\log(qT')+1)\ll m^{1/2}\log(qT')\ll m^{1/2}\log(qT).
\end{equation}
Here in the second inequality we used $m\ll\log q$.
In the last inequality
we also used $\log(qT')\ll\log(q\log q)\ll\log q\ll\log(qT)$ when $T\leq m$.
Applying (\ref{eq:estnR2}) to (\ref{eq:argG2}), we reach
the result when $-5\leq\sigma\leq 3$.

The proof of Proposition \ref{Prop:argG} is completed.
\end{proof}
\begin{proof}[Proof of Theorem \ref{Thm5}]
 Subtracting (\ref{eq:Littlewood2}) with $b_{\kappa}=1+\kappa$
from that with $b_{\kappa}=3+\kappa$, we have
\begin{align*}
4\pi N_1(T,\chi)
&=(I_1(3+\kappa,\chi,T)-I_1(1+\kappa,\chi,T))\\
&+(I_2^{+}(3+\kappa,\chi,T)-I_2^{+}(1+\kappa,\chi,T))\\
&-(I_2^{-}(3+\kappa,\chi,T)-I_2^{-}(1+\kappa,\chi,T))
+O(1).
\end{align*}
By (\ref{eq:I1}) we have
\[
I_1(3+\kappa,\chi,T)-I_1(1+\kappa,\chi,T)
=4T\log\frac{qT}{2\pi m}-4T+O(\log(qT)).
\]
On the other hand, Proposition \ref{Prop:argG} gives
 \begin{align*}
  I_2^{\pm}(3+\kappa,\chi,T)-I_2^{\pm}(1+\kappa,\chi,T)
&=\int_{-3-\kappa}^{-1-\kappa}\arg G(\sigma\pm iT,\chi)d\sigma\\
&\ll m^{1/2}\log(qT).
 \end{align*}
Combining these, we obtain the result.
\end{proof}
\begin{proof}[Proof of Theorem \ref{Thm6}]
 We start with (\ref{eq:Littlewood2}).
We estimate $I_2^{\pm}=I_2^{\pm}(b_{\kappa},\chi,T)$.
By Proposition \ref{Prop:argG} we have
\begin{equation}\label{eq:I2}
 I_2^{\pm}\ll m\log m+m^{1/2}\log(qT)\ll m^{1/2}\log(qT).
\end{equation}
Here in the last inequality we used $m\ll\log q$.
We also note that
\begin{align*}
&2\pi
\sum_{\begin{subarray}{c}
       \rho'=\beta'+i\gamma'\\
       \beta'>0, -T\leq \gamma'\leq T
      \end{subarray}}
(\beta'+b_{\kappa})\\
&=2\pi
\sum_{\begin{subarray}{c}
       \rho'=\beta'+i\gamma'\\
       \beta'>0, -T\leq \gamma'\leq T
      \end{subarray}}
\left(\beta'-\frac{1}{2}\right)
+2\pi\left(b_{\kappa}+\frac{1}{2}\right)N_1(T,\chi).
\end{align*}
Applying Theorem \ref{Thm5}, (\ref{eq:I1}) and (\ref{eq:I2}),
we complete the proof.
\end{proof}
\section{Analogues of Speiser's theorem}
In this section we show Theorems \ref{Thm8}--\ref{Thm7}.
First of all we investigate the sign of $\Rep(L'/L)(s,\chi)$
on $\Rep(s)=1/2$.
For convenience we put
\[
 \mathcal{T}=\mathcal{T}_{\chi}:=\{t\in\R:L(\tfrac{1}{2}+it,\chi)\neq 0\}.
\]
\begin{Lemma}\label{Lem:center}
Let $\chi$ be a primitive Dirichlet character modulo $q>1$.
Then for $t\in\mathcal{T}$
\begin{equation}\label{eq:negative}
 \Rep\frac{L'}{L}\left(\frac{1}{2}+it,\chi\right)<0
\end{equation}
holds if one of the following conditions holds:
\begin{enumerate}
 \item $\kappa=0$ and $q\geq 216$.
 \item $\kappa=0$ and $|t|\geq 2$.
 \item $\kappa=1$ and $q\geq 10$.
 \item $\kappa=1$ and $|t|\geq 3$.
\end{enumerate}
\end{Lemma}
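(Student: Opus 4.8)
The plan is to compute $\Rep(L'/L)(\tfrac12+it,\chi)$ on the critical line exactly. Setting $\sigma=\tfrac12$ in \eqref{eq:Hadamard3} gives
\[
\Rep\frac{L'}{L}\!\left(\tfrac12+it,\chi\right)
=-\frac12\log\frac q\pi-\frac12\Rep\frac{\Gamma'}{\Gamma}\!\left(\frac{\tfrac12+\kappa+it}{2}\right)
+\sum_{\rho=\beta+i\gamma}\frac{\tfrac12-\beta}{(\tfrac12-\beta)^2+(t-\gamma)^2}.
\]
By the functional equation $L(s,\chi)=F(s,\chi)L(1-s,\overline\chi)$ together with $\overline{L(\overline s,\overline\chi)}=L(s,\chi)$, the map $\rho\mapsto 1-\overline\rho$ is a multiplicity-preserving involution of the set of nontrivial zeros of $L(s,\chi)$ (this is the symmetry already used above to get $\beta\ge 1-\Theta(\chi)$). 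For $s=\tfrac12+it$ it fixes the denominators $(\tfrac12-\beta)^2+(t-\gamma)^2$ and negates the numerators $\tfrac12-\beta$, so grouping the (absolutely convergent) sum into orbits of the involution makes it vanish. Hence for $t\in\mathcal T$
\[
\Rep\frac{L'}{L}\!\left(\tfrac12+it,\chi\right)=-\frac12\log\frac q\pi-\frac12\Rep\frac{\Gamma'}{\Gamma}\!\left(\frac{\tfrac12+\kappa+it}{2}\right),
\]
and the whole lemma reduces to lower bounds for the digamma term; the hypothesis $t\in\mathcal T$ is used only to keep $L'/L$ finite.

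For the cases with large conductor, $q\ge 216$ (the case $\kappa=0$) and $q\ge 10$ (the case $\kappa=1$), I would apply \eqref{eq:Gammaineq} with $x=(\tfrac12+\kappa)/2>0$ and $y=t/2$, which gives $\Rep\frac{\Gamma'}{\Gamma}(\frac{\tfrac12+\kappa+it}{2})\ge\frac{\Gamma'}{\Gamma}(\frac{1+2\kappa}{4})$ for every $t$. Inserting the Gauss values $\frac{\Gamma'}{\Gamma}(\tfrac14)=-c_E-\tfrac\pi2-3\log2$ and (via the reflection formula) $\frac{\Gamma'}{\Gamma}(\tfrac34)=-c_E+\tfrac\pi2-3\log2$ shows that $\Rep(L'/L)(\tfrac12+it,\chi)<0$ exactly when $q>8\pi e^{c_E+\pi/2}=215.3\ldots$ for $\kappa=0$ and when $q>8\pi e^{c_E-\pi/2}=9.30\ldots$ for $\kappa=1$, that is, for $q\ge216$ and $q\ge10$ respectively.

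For the cases with large ordinate, $|t|\ge2$ (the case $\kappa=0$) and $|t|\ge3$ (the case $\kappa=1$), the quantity $\frac{\Gamma'}{\Gamma}(\frac{1+2\kappa}{4})$ is far too negative for the smallest conductors ($q=5$, resp.\ $q=3$), and the crude bound \eqref{eq:Gammaineq0} is also too lossy; bridging this gap is the main obstacle. I would instead iterate the recurrence coming from \eqref{eq:HadamardG} twice, $\frac{\Gamma'}{\Gamma}(z)=\frac{\Gamma'}{\Gamma}(z+2)-\frac1z-\frac1{z+1}$ with $z=\frac{\tfrac12+\kappa+it}{2}$ (so $|\Imp z|=|t|/2\ge1$, resp.\ $\ge\tfrac32$), then bound $\Rep\frac{\Gamma'}{\Gamma}(z+2)\ge\frac{\Gamma'}{\Gamma}(\tfrac94)$ (resp.\ $\ge\frac{\Gamma'}{\Gamma}(\tfrac{11}4)$) by \eqref{eq:Gammaineq} and each correction term by $\Rep\frac1{z+n}\le\Rep(z+n)/((\Rep(z+n))^2+(\Imp z)^2)$, using the lower bound on $|\Imp z|$. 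A short numerical check then gives $\Rep\frac{\Gamma'}{\Gamma}(z)>-0.16$ when $\kappa=0$ and $\Rep\frac{\Gamma'}{\Gamma}(z)>0.22$ when $\kappa=1$, and in each case this exceeds $-\log(q/\pi)$ for every admissible conductor, so $\Rep(L'/L)(\tfrac12+it,\chi)<0$. The only delicate point is confirming that two shifts suffice: a single shift is borderline in the even case and fails when $q=3$.
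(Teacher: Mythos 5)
Your proposal is correct and follows essentially the same route as the paper: the zero sum on $\Rep(s)=\tfrac12$ is killed by the involution $\rho\mapsto 1-\overline{\rho}$, conditions (1) and (3) follow from the monotonicity bound $\Rep\frac{\Gamma'}{\Gamma}(x+iy)\ge\frac{\Gamma'}{\Gamma}(x)$ together with Gauss's values at $\tfrac14$ and $\tfrac34$, and conditions (2) and (4) reduce to a numerical lower bound for $\Rep\frac{\Gamma'}{\Gamma}$ at the threshold ordinate. The only (immaterial) difference is that the paper obtains the latter bound by truncating the series $\frac{\Gamma'}{\Gamma}(x)+y^2\sum_n\frac{1}{(n+x)\{(n+x)^2+y^2\}}$, whereas you shift twice by the recurrence and bound the correction terms; both yield bounds comfortably above $-\log(q/\pi)$.
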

\begin{proof}
 We begin with (\ref{eq:Hadamard3}).
Since $L(s,\chi)=F(s,\chi)L(1-s,\overline{\chi})$ and 
$\overline{L(\overline{s},\overline{\chi})}=L(s,\chi)$,
each zero of $L(s,\chi)$ in $\Rep(s)>1/2$ can be written
by $1-\overline{\rho}$ uniquely, where $\rho=\beta+i\gamma$ is a zero
of $L(s,\chi)$ in $0<\beta<1/2$.
Furthermore, routine calculation gives
\[
 \frac{\sigma-\beta}{|s-\rho|^2}+\frac{\sigma-(1-\beta)}{|s-(1-\overline{\rho})|^2}
=(2\sigma-1)
\frac{(\sigma-\frac{1}{2})^2-(\beta-\frac{1}{2})^2+(t-\gamma)^2}
{|s-\rho|^2|s-1+\overline{\rho}|^2}.
\]
Applying these to (\ref{eq:Hadamard3}), for $s=\sigma+it$
with $L(s,\chi)\neq 0$ we find
\begin{equation}\label{eq:Hadamard4}
 \Rep\frac{L'}{L}(s,\chi)
=-\frac{1}{2}\log\frac{q}{\pi}
-\frac{1}{2}\Rep\frac{\Gamma'}{\Gamma}
\left(
\frac{s+\kappa}{2}
\right)
+\left(\sigma-\frac{1}{2}\right)J(s,\chi),
\end{equation}
where
\[
 J(s,\chi)=\sum_{\beta=\frac{1}{2}}\frac{1}{|s-\rho|^2}
+2\sum_{\beta<1/2}
\frac{(\sigma-\frac{1}{2})^2-(\beta-\frac{1}{2})^2+(t-\gamma)^2}
{|s-\rho|^2|s-1+\overline{\rho}|^2}.
\]
Thus, for $t\in\mathcal{T}$ we have
\begin{equation}\label{eq:center1}
 \Rep\frac{L'}{L}\left(\frac{1}{2}+it,\chi\right)
=-\frac{1}{2}\log\frac{q}{\pi}
-\frac{1}{2}\Rep\frac{\Gamma'}{\Gamma}
\left(
\frac{1}{4}+\frac{\kappa}{2}+\frac{it}{2}
\right).
\end{equation}
We note that the right-hand side is an even function with respect to $t$.
Therefore we concentrate on $t\geq 0$ below.
Let $t_1\in[0,\infty)$.
Since the right-hand side of (\ref{eq:center1})
is monotonically decreasing on $t\geq 0$ thanks to (\ref{eq:Gammaineq}),
for $t\in\mathcal{T}\cap[t_1,\infty)$
we have
\begin{equation}\label{eq:center2}
  \Rep\frac{L'}{L}\left(\frac{1}{2}+it,\chi\right)
\leq-\frac{1}{2}\log\frac{q}{\pi}
-\frac{1}{2}\Rep\frac{\Gamma'}{\Gamma}
\left(
\frac{1}{4}+\frac{\kappa}{2}+\frac{it_1}{2}
\right).
\end{equation}

We take $t_1=0$. Then (\ref{eq:negative}) holds for
$t\in\mathcal{T}$, provided
\begin{equation}\label{eq:qcond}
 q>\pi\exp\left(-\frac{\Gamma'}{\Gamma}\left(\frac{1}{4}+\frac{\kappa}{2}\right)\right).
\end{equation}
By \cite[(C.15) and (C.16)]{MV}, the right-hand side of (\ref{eq:qcond})
equals
\[
 =8\pi\exp\left(c_E+(-1)^{\kappa}\frac{\pi}{2}\right)
=\begin{cases}
  215.3\ldots & \text{if $\kappa=0$,}\\
  9.3\ldots   & \text{if $\kappa=1$.}
 \end{cases}
\]
Thus (\ref{eq:negative}) holds if the condition (1) or (3) is
satisfied.

We go back to (\ref{eq:center2}) and consider the case $\kappa=0$.
In this case $q\geq 5$ holds.
We have
\[
 \log\frac{5}{\pi}>0.46.
\]
On the other hand, by numerical computation together with
(\ref{eq:Gammaineq}) and \cite[(C.15)]{MV} we find
\begin{align*}
 \Rep\frac{\Gamma'}{\Gamma}\left(\frac{1}{4}+i\right)
&=-c_E-\frac{\pi}{2}-3\log 2
+\sum_{n=0}^{\infty}\frac{1}{(n+\frac{1}{4})\{(n+\frac{1}{4})^2+1\}}\\
&>-0.04.
\end{align*}
Here in the last inequality we discarded the sum over $n>5$
and carried out a numerical calculation.
Combining these and (\ref{eq:center2}), we see that (\ref{eq:negative})
holds if the condition (2) is satisfied.

Finally we treat the case $\kappa=1$.
We note that $\kappa=1$ implies $q\geq 3$.
In a similar manner as the case $\kappa=0$ we find
\[
\log\frac{3}{\pi}>-0.05\text{ and }
\Rep\frac{\Gamma'}{\Gamma}\left(\frac{3}{4}+\frac{3i}{2}\right)
>0.37.
\]
This together with (\ref{eq:center2}) says that
(\ref{eq:negative}) holds under the condition (4).
\end{proof}
By Lemma \ref{Lem:center} we immediately see
\begin{Corollary}
 Let $\chi$ be a primitive Dirichlet character modulo $q$.
Suppose that $\kappa=0$ and $q\geq 216$, or that
$\kappa=1$ and $q\geq 10$.
Let $t\in\R$.
If $L'(\frac{1}{2}+it,\chi)=0$,
then $s=\frac{1}{2}+it$ is a multiple zero of $L(s,\chi)$.
\end{Corollary}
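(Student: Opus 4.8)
The plan is to derive the Corollary directly from Lemma \ref{Lem:center}, by a short case distinction according to whether $s=\tfrac{1}{2}+it$ is a zero of $L(s,\chi)$ or not. Throughout, the standing hypothesis is that $\kappa=0$ and $q\geq 216$, or that $\kappa=1$ and $q\geq 10$, which is exactly the union of conditions (1) and (3) in Lemma \ref{Lem:center}.

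First I would assume $L'(\tfrac{1}{2}+it,\chi)=0$ and treat the case $L(\tfrac{1}{2}+it,\chi)\neq 0$, i.e. $t\in\mathcal{T}$. By condition (1) or (3) of Lemma \ref{Lem:center} we then have $\Rep\frac{L'}{L}(\tfrac{1}{2}+it,\chi)<0$. On the other hand, since $L(\tfrac{1}{2}+it,\chi)\neq 0$, the quotient $(L'/L)(\tfrac{1}{2}+it,\chi)=L'(\tfrac{1}{2}+it,\chi)/L(\tfrac{1}{2}+it,\chi)$ is well defined and equals $0$, so its real part is $0$. This contradiction shows that the case $L(\tfrac{1}{2}+it,\chi)\neq 0$ cannot occur.

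Hence $s=\tfrac{1}{2}+it$ is necessarily a zero of $L(s,\chi)$; write $\mu\geq 1$ for its order. If $\mu=1$, then the Taylor expansion of $L(s,\chi)$ about $\tfrac{1}{2}+it$ has a nonzero linear term, so $L'(\tfrac{1}{2}+it,\chi)\neq 0$, contradicting our assumption. Therefore $\mu\geq 2$, that is, $s=\tfrac{1}{2}+it$ is a multiple zero of $L(s,\chi)$, which is the assertion of the Corollary.

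There is essentially no obstacle here: the entire analytic content is already packaged in Lemma \ref{Lem:center}, and the remaining argument is a one-line deduction. The only point deserving any care is that $(L'/L)(s,\chi)$ is not defined at zeros of $L(s,\chi)$, which is precisely why one must separate the two cases rather than argue uniformly from the negativity of the logarithmic derivative on $\Rep(s)=1/2$.
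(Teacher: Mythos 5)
Your argument is correct and is exactly the deduction the paper intends when it writes that the Corollary follows ``immediately'' from Lemma \ref{Lem:center}: negativity of $\Rep(L'/L)$ at $\frac12+it$ rules out $L(\frac12+it,\chi)\neq 0$, and then the vanishing of $L'$ at a zero of $L$ forces multiplicity at least two. No issues.
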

\begin{Remark}
 This was obtained by Y\i ld\i r\i m
\cite[Remark of Theorem 1]{Yi}.
However he seems to assume GRH from the context.
We stress that the above result holds
unconditionally.
\end{Remark}
We go back to the proof of Theorems \ref{Thm8}--\ref{Thm7}.
Below we assume one of the following conditions:
\begin{itemize}
 \item[(a)] $\kappa=0$ and $q\geq 216$.
 \item[(b)] $\kappa=1$ and $q\geq 23$.
\end{itemize}
We temporarily fix $\chi$ and a zero $\rho_0=\frac{1}{2}+i\gamma_0$
($\gamma_0\in\R$) of $L(s,\chi)$.
By (\ref{eq:Hadamard4}) we have
\[
 \Rep\frac{L'}{L}(s,\chi)
=\mult(\rho_0,\chi)\frac{\sigma-\frac{1}{2}}{|s-\rho_0|^2}
-\frac{1}{2}\log\frac{q}{\pi}
-\frac{1}{2}\Rep\frac{\Gamma'}{\Gamma}
\left(\frac{\rho_0+\kappa}{2}\right)+o(1)
\]
as $s\to\rho_0$,
where $\mult(\rho_0,\chi)$ is the multiplicity of the zero of $L(s,\chi)$
at $s=\rho_0$.
In the same manner as the proof of Lemma \ref{Lem:center},
the above assumption (a) or (b) implies that the constant term
$-\frac{1}{2}\log\frac{q}{\pi}
-\frac{1}{2}\Rep\frac{\Gamma'}{\Gamma}(\frac{\rho_0+\kappa}{2})$
is negative.
Thus there exists $\varepsilon=\varepsilon(\chi,\rho_0)>0$ such that
such that $\Rep(L'/L)(s,\chi)<0$ holds on
$\{s=\sigma+it:|s-\rho_0|=\varepsilon,~\sigma\leq 1/2\}$.
In the proof of Theorem \ref{Thm4}
we know that there exists $\delta>0$ satisfying
$\Rep(L'/L)(s,\chi)<0$ on $\{s=\sigma+it:|s|=\delta,~\sigma\leq 0\}$
when $\kappa=0$.
By the above discussion there exists a rectangle $\mathcal{R}$ with
vertices $\pm iT$ and $\frac{1}{2}\pm iT$ having small left semicircles
at zeros of $L(s,\chi)$
on $\Rep(s)=0$ and $\Rep(s)=1/2$ such that
$\Rep(L'/L)(s,\chi)$ is negative on the vertical sides of $\mathcal{R}$.
We apply the argument principle to $(L'/L)(s,\chi)$ on $\mathcal{R}$.
In consequence we obtain
\begin{equation}\label{eq:argprinciple}
 \frac{1}{2\pi}\Delta_{\mathcal{R}}\arg\frac{L'}{L}(s,\chi)
=N_1^{-}(T,\chi)-N^{-}(T,\chi)
-\begin{cases}
   1 & \text{if $\kappa=0$},\\
   0 & \text{if $\kappa=1$},
 \end{cases}
\end{equation}
where $\Delta_{\mathcal{R}}$ denotes the continuous variation
around the contour $\mathcal{R}$ anticlockwise.
Here we used the fact that $s=0$ is a trivial zero of $L(s,\chi)$
if $\kappa=0$.
Based on (\ref{eq:argprinciple}),
we show Theorems \ref{Thm8}--\ref{Thm7}.
Firstly we prove Theorem \ref{Thm7}.
\begin{proof}[Proof of Theorem \ref{Thm7}]
We continue to assume (a) or (b).
Since $\Rep(L'/L)(s,\chi)<0$ on the vertical sides of $\mathcal{R}$,
the continuous variation of $\arg(L'/L)(s,\chi)$ along each vertical side
is $O(1)$.
Next we investigate the horizontal sides.
We have
\begin{equation}\label{eq:arglogder}
 \left.\arg\frac{L'}{L}(s,\chi)\right|_{s=\frac{1}{2}+iT}^{s=iT}
=\left.\arg L'(s,\chi)\right|_{s=\frac{1}{2}+iT}^{s=iT}
-\left.\arg L(s,\chi)\right|_{s=\frac{1}{2}+iT}^{s=iT}.
\end{equation}
The continuous variation of $\arg L'(s,\chi)$ from $s=\frac{1}{2}+iT$ to $s=iT$
equals that of $\arg G(s,\chi)$, where the branch of $\arg G(s,\chi)$
is determined in the same manner as in $\S 3$.
Combining this with Proposition \ref{Prop:argG},
we see that the variation of $\arg L'(s,\chi)$ on (\ref{eq:arglogder})
is $O(m^{1/2}\log(qT))$.
On the other hand it is well-known that the last term on (\ref{eq:arglogder})
is $O(\log(qT))$: see \cite[Lemma 12.8]{MV} for example.
In summary we see that (\ref{eq:arglogder}) is $O(m^{1/2}\log(qT))$.
In the same manner the variation of $\arg(L'/L)(s,\chi)$
from $s=-iT$ to $s=\frac{1}{2}-iT$ is $O(m^{1/2}\log(qT))$.
We conclude that
the left-hand side of (\ref{eq:argprinciple}) is $O(m^{1/2}\log(qT))$
as desired.

We consider the case neither (a) nor (b) are satisfied.
Since the number of such characters $\chi$ is finite,
Theorem \ref{Thm7} has already been established in \cite{GS}
(see Remark of Theorem \ref{Thm7}).
We can also show this by modifying the above discussion slightly,
whose details are omitted.
\end{proof}
The following proposition is a key point to show Theorems \ref{Thm8}
and \ref{Thm9}:
\begin{Proposition}\label{Prop:Sp1}
 Let $\chi$ be a fixed primitive Dirichlet character satisfying
$\kappa=0$ and $q\geq 216$, or $\kappa=1$ and $q\geq 23$.
Then at least one of the following assertions holds:
\begin{enumerate}
 \item There exists $T_0=T_0(\chi)>0$ such that
$N^{-}(T,\chi)>T/2$ for any $T\geq T_0$.
 \item There exists a sequence $\{T_j\}_{j=1}^{\infty}$
such that $T_j\to\infty$ as $j\to\infty$
and
\[
 N_1^{-}(T_j,\chi)=N^{-}(T_j,\chi)+
\begin{cases}
1 & \text{if $\kappa=0$,}\\
0 & \text{if $\kappa=1$}
\end{cases}
\]
holds for any $j\in\Z_{\geq 1}$.
\end{enumerate}
\end{Proposition}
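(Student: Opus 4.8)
The plan is to prove the contrapositive: assuming (1) fails, I produce a sequence $T_j\to\infty$ along which (2) holds, and the whole argument runs off the identity (\ref{eq:argprinciple}). Write $M(T)$ for the right-hand side of (\ref{eq:argprinciple}), so that $2\pi M(T)=\Delta_{\mathcal{R}}\arg(L'/L)(s,\chi)$ with $\mathcal{R}=\mathcal{R}_T$ the indented rectangle constructed above, for any admissible height $T$ (one not equal to an ordinate of a zero of $L(s,\chi)$ or of $L'(s,\chi)$; these ordinates form a set of measure zero). The key observation is topological: if $\Rep(L'/L)(s,\chi)<0$ on the whole of $\mathcal{R}_T$, then $(L'/L)(\mathcal{R}_T)$ lies in the open left half-plane, which is convex — hence simply connected — and omits the origin, so the image loop has winding number $0$ about $0$; that is, $\Delta_{\mathcal{R}_T}\arg(L'/L)=0$ and therefore $M(T)=0$. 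For a fixed $\chi$ with $\kappa=0$, $q\ge216$ or $\kappa=1$, $q\ge23$, Lemma \ref{Lem:center}, the estimate on $\Rep(s)=0$ from the proof of Theorem \ref{Thm4}, and the choice of indentation radii made before (\ref{eq:argprinciple}) already give $\Rep(L'/L)(s,\chi)<0$ on both indented vertical sides of $\mathcal{R}_T$; so the only place this inequality can fail on $\mathcal{R}_T$ is on a horizontal side $\Imp(s)=\pm T$.

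Now suppose (2) fails, so $M(T)\neq0$ for all admissible $T\ge T_0$ with some $T_0$, which we enlarge past an absolute constant. Fix such a $T$. By the previous paragraph, $M(T)\neq0$ forces $\Rep(L'/L)(\sigma+iT,\chi)\ge0$ or $\Rep(L'/L)(\sigma-iT,\chi)\ge0$ for some $\sigma\in[0,1/2)$. I feed this into the structural formula (\ref{eq:Hadamard4}): for $T$ above an absolute constant, (\ref{eq:Gammaineq0}) makes the term $-\tfrac12\log(q/\pi)-\tfrac12\Rep(\Gamma'/\Gamma)\big(\tfrac{\sigma+\kappa}{2}\pm\tfrac{iT}{2}\big)$ strictly negative, while $\sigma-\tfrac12<0$; hence $\Rep(L'/L)(\sigma\pm iT,\chi)\ge0$ is possible only if $J(\sigma\pm iT,\chi)<0$. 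Since the $\beta=\tfrac12$ part of $J(s,\chi)$ is $\ge0$, some term of its $\beta<\tfrac12$ sum must be negative, so $(\sigma-\tfrac12)^2-(\beta-\tfrac12)^2+(T\mp\gamma)^2<0$ for a zero $\rho=\beta+i\gamma$ of $L(s,\chi)$ with $\beta<\tfrac12$; as $(\sigma-\tfrac12)^2\ge0$ and $(\beta-\tfrac12)^2<\tfrac14$, this yields $(T\mp\gamma)^2<\tfrac14$, i.e. $\big||\gamma|-T\big|<\tfrac12$.

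Thus, under the negation of both (1) and (2), every admissible $T\ge T_0$ lies within $\tfrac12$ of $|\gamma|$ for some zero $\rho=\beta+i\gamma$ of $L(s,\chi)$ with $\beta<\tfrac12$. Since the admissible heights are of full measure, the open intervals $\big(|\gamma|-\tfrac12,\,|\gamma|+\tfrac12\big)$, one per such zero, cover $[T_0,\infty)$ up to a null set; comparing lengths, the number of zeros $\rho=\beta+i\gamma$ of $L(s,\chi)$ with $\beta<\tfrac12$ and $|\gamma|\le U$ is at least $U-T_0-1$, so $N^-(U,\chi)\ge U-T_0-1>U/2$ for all large $U$. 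That is assertion (1), contradicting $\neg(1)$. Hence (1) or (2) holds.

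The step I expect to be most delicate is the passage from $J(\sigma\pm iT,\chi)<0$ to an off-line zero of $L(s,\chi)$ genuinely within distance $\tfrac12$ of the ordinate $\pm T$: the constant must be kept strictly below $1$, so that the resulting density of off-line zeros beats $U/2$ rather than merely matching it (this is the role of the bounds $(\beta-\tfrac12)^2<\tfrac14$ and $(\sigma-\tfrac12)^2\ge0$ used above). A secondary point requiring care is that, for a fixed $\chi$, the indented rectangle $\mathcal{R}_T$ really can be taken with $\Rep(L'/L)(s,\chi)<0$ on its entire vertical part for every admissible $T$, even though new ordinates of zeros of $L(s,\chi)$ on $\Rep(s)=1/2$ keep appearing as $T$ grows — but this is exactly what Lemma \ref{Lem:center} and the local estimates cited before (\ref{eq:argprinciple}) provide.
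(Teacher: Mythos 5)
Your proposal is correct and follows essentially the same route as the paper: either $\Rep(L'/L)(s,\chi)<0$ on the whole indented contour for a sequence of heights, forcing the winding number to vanish and giving (2), or else (\ref{eq:Hadamard4}) forces $J<0$ and hence an off-line zero within $\tfrac12$ of every large height, giving $N^-(T,\chi)\gtrsim T$ and hence (1). The only (immaterial) difference is that you organize the dichotomy as a contrapositive and count the off-line zeros by a covering/measure argument, whereas the paper samples the heights at large integers $t=n$ to get one zero per unit window.
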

\begin{proof}
 First of all we suppose that there exists
a sequence $\{T_j\}_{j=1}^{\infty}$ such that
$T_j\to\infty$ as $j\to\infty$ and
both $\Rep(L'/L)(\sigma+iT_j,\chi)$
and $\Rep(L'/L)(\sigma-iT_j,\chi)$
are negative for any $j$ and $\sigma\in[0,1/2]$.
Then for any $j$,
$\Rep(L'/L)(s,\chi)$ is negative on $\mathcal{R}$ with $T=T_j$.
This implies that the left-hand side of (\ref{eq:argprinciple})
is $0$ when $T=T_j$.
In this case the assertion (2) in Proposition \ref{Prop:Sp1}
holds.

Next we suppose that $\{T_j\}_{j=1}^{\infty}$ with the above property
does not exist.
Then for any sufficiently large $t$ there exists $\sigma\in[0,1/2]$
such that $\Rep(L'/L)(\sigma+it,\chi)$ or $\Rep(L'/L)(\sigma-it,\chi)$
is nonnegative.
By Stirling's formula the first two terms on the right-hand side
of (\ref{eq:Hadamard4}) are negative for $s=\sigma+it$ or $s=\sigma-it$.
Thus $J(\sigma+it,\chi)$ or $J(\sigma-it,\chi)$
has to be negative.
This implies that
there exists a zero $\rho=\beta+i\gamma$
of $L(s,\chi)$ with $\beta<1/2$ satisfying
\[
 (\beta-\tfrac{1}{2})^2>(\sigma-\tfrac{1}{2})^2+(t-\gamma)^2
\text{ or }
(\beta-\tfrac{1}{2})^2>(\sigma-\tfrac{1}{2})^2+(t+\gamma)^2.
\]
This yields $|t-\gamma|<1/2$ or $|t+\gamma|<1/2$.
We take $t$ as a sufficiently large integer $n$.
Then we see that there exists at least one zero $\rho=\beta+i\gamma$
of $L(s,\chi)$ with $\beta<1/2$ and $n-\frac{1}{2}<|\gamma|<n+\frac{1}{2}$.
In summary we obtain $N^{-}(T,\chi)\geq T+O_{\chi}(1)$.
This implies the assertion (1) in Proposition \ref{Prop:Sp1}.
\end{proof}
\begin{proof}[Proof of Theorems \ref{Thm8} and \ref{Thm9}]
 As was mentioned in Remark of Theorems \ref{Thm8} and \ref{Thm9},
Y\i ld\i r\i m \cite{Yi} has already established
the implications (i)$\Longrightarrow$(ii).
This can be also checked by Theorem \ref{Thm1} and (\ref{eq:argprinciple}).

We suppose (ii). Then we see from
the assumption (ii) and Theorem \ref{Thm7}
that $N^{-}(T,\chi)=O_{\chi}(\log T)$.
This implies that the assertion (1) in Proposition \ref{Prop:Sp1}
cannot be satisfied.
Thus the assertion (2) in Proposition \ref{Prop:Sp1} holds.
Using the assumption (ii) again, we see  $N^{-}(T_j,\chi)=0$
for any $j$,
which is nothing but (i).
\end{proof}
Finally we mention the case when $\chi$ is quadratic.
We give a detailed information about the zero of $L'(s,\chi)$
in Theorem \ref{Thm8},
which is stated in the last sentence of the proof of Theorem 1 in
\cite{Yi}.
\begin{Proposition}\label{Prop:quadratic3}
 Let $\chi$ be an even quadratic primitive Dirichlet character
with $q\geq 216$.
We assume GRH for $L(s,\chi)$.
Then the zero of $L'(s,\chi)$ in $0<\Rep(s)<1/2$,
which is mentioned in Theorem \ref{Thm8}, is real.
\end{Proposition}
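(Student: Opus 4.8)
The plan is to obtain this as a direct consequence of the reality of $\chi$ together with the uniqueness assertion in Theorem~\ref{Thm8}. Since $\chi$ is quadratic, the coefficients $\chi(n)$ are real, so from the Dirichlet series $L'(s,\chi)=-\sum_{n\geq 2}\chi(n)(\log n)n^{-s}$, valid for $\Rep(s)>1$, one gets $\overline{L'(\overline{s},\chi)}=L'(s,\chi)$ there, and since $L(s,\chi)$ (hence $L'(s,\chi)$) is entire for $q>1$, this identity extends to all $s\in\C$. In particular the zero set of $L'(s,\chi)$ is symmetric under complex conjugation: $L'(\rho',\chi)=0$ implies $L'(\overline{\rho'},\chi)=0$.

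Now assume GRH for $L(s,\chi)$. This is exactly condition (i) of Theorem~\ref{Thm8} (it even gives $L(s,\chi)\neq 0$ throughout $0<\Rep(s)<1/2$), so Theorem~\ref{Thm8} provides a \emph{unique} zero $\rho'=\beta'+i\gamma'$ of $L'(s,\chi)$ in $0<\Rep(s)<1/2$. Suppose $\gamma'\neq 0$. Then $\overline{\rho'}=\beta'-i\gamma'$ is again a zero of $L'(s,\chi)$ by the conjugation symmetry; it has the same real part $\beta'\in(0,1/2)$, so it too lies in $0<\Rep(s)<1/2$; and $\overline{\rho'}\neq\rho'$. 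Thus $L'(s,\chi)$ would have two distinct zeros in the strip, contradicting Theorem~\ref{Thm8} (whether one reads the uniqueness there as uniqueness of location or as the count with multiplicity via $N_1^{-}(T,\chi)$). Hence $\gamma'=0$ and the zero is real; the same count shows it is in fact a simple zero of $L'(s,\chi)$ lying in the interval $(0,1/2)$.

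There is no genuinely hard analytic step here: once Theorem~\ref{Thm8} is available under GRH, the conclusion is immediate from the conjugation symmetry of a real $L$-function. The only points to state carefully are that GRH for $L(s,\chi)$ really is hypothesis (i) of Theorem~\ref{Thm8}, so that the uniqueness in (ii) can be invoked, and the (harmless) observation that a real double zero or a conjugate pair of zeros is equally excluded by that uniqueness. The interest of the proposition is therefore not in its proof but in recording, for even quadratic $\chi$ with $q\geq 216$, the precise nature of the zero of $L'(s,\chi)$ whose existence and approximate location were given in Theorem~\ref{Thm8}.
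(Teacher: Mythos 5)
Your proposal is correct and is essentially identical to the paper's own (very short) proof: GRH gives condition (i) of Theorem~\ref{Thm8}, and since $\chi$ is real the zeros of $L'(s,\chi)$ are symmetric under conjugation, so the unique zero in $0<\Rep(s)<1/2$ must be real. The extra details you supply (extension of $\overline{L'(\overline{s},\chi)}=L'(s,\chi)$ from the half-plane of absolute convergence to all of $\C$, and the remark on multiplicity) are fine but not a different route.
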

\begin{proof}
 If $\rho'$ is a non-real zero of $L'(s,\chi)$,
then $\overline{\rho'}$ is a different zero of $L'(s,\chi)$.
Combining this with Theorem \ref{Thm8}, we reach the result.
\end{proof}
 
\end{document}